\documentclass{article}
\usepackage{fullpage}

\usepackage[utf8]{inputenc}
\usepackage{amsthm,amssymb}
\usepackage[leqno]{amsmath}
\usepackage{thmtools,mathtools}
\usepackage{caption,subcaption}
\usepackage{epsfig,graphicx,graphics,color,tikz,tikz-cd,tcolorbox}
\usetikzlibrary{calc,decorations.pathmorphing,shapes}
\usepackage{calc}
\usepackage{enumerate,enumitem}
\usepackage[sort]{cite}
\usepackage{xspace}
\usepackage{bm}
\usepackage{csquotes}
\usepackage{hyperref}
\hypersetup{
    colorlinks=true,       
    linkcolor=blue,        
    citecolor=magenta,         
    filecolor=magenta,     
    urlcolor=cyan,         
    linktoc=all
}

\usepackage{xcolor}
\newcommand{\randomcolor}{%
\definecolor{randomcolor}{RGB}
   {
    \pdfuniformdeviate 256,
    \pdfuniformdeviate 256,
    \pdfuniformdeviate 256
   }%
  \color{randomcolor}%
}

\newlist{typelist}{enumerate}{1} 
\setlist[typelist]{left=0pt, itemsep=1pt, 
         label=(\textbf{Type \arabic*}), ref=\arabic*}

\newlength{\bibitemsep}\setlength{\bibitemsep}{.2\baselineskip plus .05\baselineskip minus .05\baselineskip}
\newlength{\bibparskip}\setlength{\bibparskip}{1pt}
\let\oldthebibliography\thebibliography
\renewcommand\thebibliography[1]{%
  \oldthebibliography{#1}%
  \setlength{\parskip}{\bibitemsep}%
  \setlength{\itemsep}{\bibparskip}%
}


\theoremstyle{plain}
\newtheorem{thm}{Theorem}[section]
\newtheorem{lem}[thm]{Lemma}
\newtheorem{cor}[thm]{Corollary}
\newtheorem{cl}[thm]{Claim}
\newtheorem{prop}[thm]{Proposition}
\newtheorem{conj}{Conjecture}

\theoremstyle{definition}

\newtheorem{rem}[thm]{Remark}

\def\final{0}  
\def\iflong{\iffalse}
\ifnum\final=0  
\newcommand{\knote}[1]{{\color{red}[{\tiny \textbf{Kristóf:} \bf #1}]\marginpar{\color{red}*}}}
\newcommand{\tnote}[1]{{\color{blue}[{\tiny \textbf{Tamás:} \bf #1}]\marginpar{\color{blue}*}}}
\newcommand{\bnote}[1]{{\randomcolor[{\tiny \textbf{Bence:} \bf #1}]\marginpar{\randomcolor*}}}
\else 
\newcommand{\knote}[1]{}
\newcommand{\tnote}[1]{}
\newcommand{\bnote}[1]{}
\fi


\DeclareMathOperator\car{cr}

\DeclareMathOperator\conv{conv}

\newcommand{\bR}{\mathbb{R}}
\newcommand{\bK}{\mathbb{K}}
\newcommand{\bF}{\mathbb{F}}

\newcommand{\cB}{\mathcal{B}}
\newcommand{\cC}{\mathcal{C}}
\newcommand{\cF}{\mathcal{F}}

\newcommand{\cI}{\mathcal{I}}

\newcommand{\cO}{\mathcal{O}}
\newcommand{\cP}{\mathcal{P}}
\newcommand{\cT}{\mathcal{T}}
\newcommand{\cX}{\mathcal{X}}
\newcommand{\cY}{\mathcal{Y}}
\newcommand{\cZ}{\mathcal{Z}}

\newcommand{\dy}{$\Delta$\,-\,{\large \texttt{Y}}\xspace}
\newcommand{\yd}{{\large \texttt{Y}}\,-\,$\Delta$\xspace}



\title{Reconfiguration of Basis Pairs in Regular Matroids}

\author{
 Kristóf Bérczi\thanks{MTA-ELTE Momentum Matroid Optimization Research Group and HUN-REN–ELTE Egerváry Research Group, Department of Operations Research, Eötvös Loránd University, Budapest, Hungary.\\Emails: \texttt{kristof.berczi@ttk.elte.hu, matben@student.elte.hu, tamas.schwarcz@ttk.elte.hu}.}
 \and
 Bence Mátravölgyi\footnotemark[1]
 \and
 Tamás Schwarcz\footnotemark[1]
}

\date{}

\begin{document}
\maketitle
\thispagestyle{empty}

\begin{abstract} 
In recent years, combinatorial reconfiguration problems have attracted great attention due to their connection to various topics such as optimization, counting, enumeration, or sampling. One of the most intriguing open questions concerns the exchange distance of two matroid basis sequences, a problem that appears in several areas of computer science and mathematics. In 1980, White~\cite{white1980unique} proposed a conjecture for the characterization of two basis sequences being reachable from each other by symmetric exchanges, which received a significant interest also in algebra due to its connection to toric ideals and Gr\"obner bases. In this work, we verify White's conjecture for basis sequences of length two in regular matroids, a problem that was formulated as a separate question by Farber, Richter, and Shank~\cite{farber1985edge} and Andres, Hochst\"{a}ttler, and Merkel~\cite{andres2014base}. Most of previous work on White's conjecture has not considered the question from an algorithmic perspective. We study the problem from an optimization point of view: our proof implies a polynomial algorithm for determining a sequence of symmetric exchanges that transforms a basis pair into another, thus providing the first polynomial upper bound on the exchange distance of basis pairs in regular matroids. As a byproduct, we verify a conjecture of Gabow from 1976~\cite{gabow1976decomposing} on the serial symmetric exchange property of matroids for the regular case.

\medskip

\noindent \textbf{Keywords:} Exchange graph, Max-flow min-cut matroid, Reconfiguration problem, Regular matroid, Symmetric exchange, Toric ideal

\end{abstract}

\newpage
\pagenumbering{roman}
\tableofcontents
\newpage
\pagenumbering{arabic}
\setcounter{page}{1}
\section{Introduction}
\label{sec:intro}

The \emph{basis exchange axiom} of matroids implies that for any pair $X,Y$ of bases, there exists a sequence of exchanges that transforms $X$ into $Y$. White~\cite{white1980unique} studied the analogous problem for basis sequences instead of single bases. Let $\cX=(X_1,\dots,X_k)$ be a sequence of -- not necessarily disjoint -- bases of a matroid, and let $e\in X_i-X_j$ and $f\in X_j-X_i$ with $1\leq i<j\leq k$ be such that both $X_i-e+f$ and $X_j+e-f$ are bases. Then, the sequence $\cX'=(X_1,\dots,X_{i-1},X_i-e+f,X_{i+1},\dots,X_{j-1},X_j+e-f,X_{j+1},\dots,X_k)$ is obtained from $\cX$ by a \emph{symmetric exchange}. Two sequences $\cX$ and $\cY$ are called \emph{equivalent} if $\cY$ can be obtained from $\cX$ by a composition of symmetric exchanges. The question naturally arises: what is the characterization of two basis sequences being equivalent? 

There is an easy necessary condition for the equivalence of two sequences $\cX$ and $\cY$: since a symmetric exchange does not change the number of bases in the sequence that contain a given element, the union of the members of $\cX$ must coincide with the union of the members of $\cY$ as multisets. Motivated by this observation, $\cX$ and $\cY$ are called \emph{compatible} if $|\{i\mid e\in X_i, i\in\{1,\dots,k\}\}|=|\{i\mid e\in Y_i,i\in\{1,\dots,k\}\}|$ for every $e \in E$, where $E$ denotes the ground set of the matroid. White~\cite{white1980unique} conjectured that compatibility is not only necessary but also sufficient for two sequences to be equivalent.

\begin{conj}[White]\label{conj:white}
Two basis sequences $\cX$ and $\cY$ of the same length are equivalent if and only if they are compatible.
\end{conj}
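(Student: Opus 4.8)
White's conjecture in full strength is a famous open problem, so the realistic target is the case the paper actually settles: basis pairs ($k=2$) in a regular matroid $M$. I would start from the standard reductions. If an element $e$ lies in both members of $\cX$, then compatibility forces it into both members of $\cY$, and contracting $e$ commutes with symmetric exchanges; dually, an element in neither member of $\cX$ lies in neither member of $\cY$ and may be deleted. After these reductions we have a regular matroid with $E = X_1 \sqcup X_2 = Y_1 \sqcup Y_2$, all four sets bases, and $|E| = 2\operatorname{rk}(M)$; equivalently, $X_1$ and $Y_1$ are bases whose complements are bases, i.e.\ common bases of $M$ and $M^{*}$. A symmetric exchange sends $(X_1, X_2)$ to $(X_1 - e + f,\; X_2 + e - f) = (X_1 - e + f,\; \overline{X_1 - e + f})$, so the identity $X_2 = \overline{X_1}$ is an invariant, and the task reduces to showing that the graph $\mathcal{G}(M)$ on the set $\{B : B \text{ and } \overline B \text{ are bases}\}$, with an edge from $B$ to $B - e + f$ whenever $B - e + f$ \emph{and} $\overline{B - e + f}$ are both bases, is connected. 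Note that $\mathcal{G}(M)$ is precisely the single-exchange graph on common bases of $M$ and $M^{*}$.

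A purely local argument --- always shrinking $|B \triangle B'|$ by one legal exchange --- cannot work for an arbitrary matroid, since the common bases of two matroids need not be connected by single exchanges, so I would instead induct along Seymour's regular matroid decomposition: every regular matroid is obtained from graphic matroids, cographic matroids, and the sporadic matroid $R_{10}$ by $1$-, $2$- and $3$-sums. For the base cases: the graphic case asks that two decompositions of a graph into edge-disjoint spanning trees be joined by tree exchanges (in the spirit of the Farber--Richter--Shank connectedness theorem); the cographic case comes for free because the whole question is self-dual --- $B \mapsto \overline B$ is a bijection between the bases of $M$ and of $M^{*}$ respecting complementation, symmetric exchanges, and compatibility, so White's conjecture for $M$ is equivalent to White's conjecture for $M^{*}$ --- and $R_{10}$, a single $10$-element (self-dual) matroid with a large automorphism group, falls to a direct finite check. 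It remains to push connectivity of $\mathcal{G}$ through $k$-sums. The $1$-sum case is immediate, as $\mathcal{G}(M_1 \oplus M_2)$ is the Cartesian product $\mathcal{G}(M_1) \times \mathcal{G}(M_2)$. For a $2$- or $3$-sum $M = M_1 \oplus_k M_2$, a complementary-pair basis of $M$ induces bases of the prescribed shape on each summand together with a bounded amount of interface data living on the $k$ connector elements; one first edits the $M_2$-part to agree with the target while holding the interface fixed, then edits the $M_1$-part, using that an exchange performed strictly inside one summand lifts to a legal edge of $\mathcal{G}(M)$ that preserves the complement-basis property. Keeping every step explicit and bounding the number of exchanges polynomially yields the promised algorithm; arranging the induction so that each element of $X_1 \triangle X_2$ is touched essentially once yields, as a byproduct, Gabow's serial symmetric exchange property for regular matroids.

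I expect the $3$-sum step to be the main obstacle. Unlike the $1$- and $2$-sums, the three connector elements of a $3$-sum form a triangle that is simultaneously delicate on both sides and is suppressed in $M$, so a basis of $M$ can interact with this triangle in several genuinely different ways, and an exchange near the boundary in one summand may be forced to be accompanied by a compensating move involving the other summand. Identifying the correct invariant on the connector triangle, showing that any two interface states consistent with the prescribed union multiset are themselves joined by such moves, and checking that none of these moves destroys the ``complement is a basis'' condition is where the real work lies. Regularity should help here in two ways: through a totally unimodular representation, which makes the determinants governing single exchanges equal to $0$ or $\pm 1$ and hence easy to certify, and through the max-flow min-cut / total dual integrality properties of regular matroids, which help decide which exchanges remain available when the rest of the structure is held fixed.
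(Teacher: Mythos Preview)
Your high-level plan aligns with the paper's: reduce to disjoint bases, handle graphic, cographic (via duality), and $R_{10}$ directly, then push through Seymour's decomposition with the $3$-sum as the crux. However, the $3$-sum step as you describe it has a genuine gap. The proposal ``edit the $M_2$-part while holding the interface fixed, then edit the $M_1$-part'' does not work, because the interface state on the connector triangle $T$ typically \emph{must} change between $\cX$ and $\cY$, and an exchange inside one summand that alters the interface need not lift to a legal exchange in $M$ without a compensating move in the other summand. You flag this as a worry but offer no mechanism to resolve it; the appeals to total unimodularity and max-flow min-cut are not how the paper proceeds and would not by themselves supply one.

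The paper closes this gap with several ingredients you do not mention. First, a further reduction (beyond tight sets and $2$-sums) eliminating \emph{triads}: if $M$ has a cocircuit of size three, the instance shrinks by contracting and deleting elements of that triad. Second, the paper does not use Seymour's decomposition raw; it uses the Aprile--Fiorini refinement together with McGuinness's description of $(M_1\oplus_3 M_2)^*$ and a \dy observation to guarantee that, after possibly dualizing, $M=M_\circ\oplus_3 M_\bullet$ with $M_\bullet$ the graphic matroid of a \emph{simple $4$-regular} graph. Third, the graphic base case is proved in a strengthened, $F$-\emph{avoiding} form (no exchange touches the edges on a prescribed $\le 3$-vertex set), precisely so that the graphic summand can be rearranged without disturbing the connector triangle. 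Finally, instead of ``holding the interface fixed'', the paper finds a specific partition $E_\bullet-T=F_1\cup F_2$ and an edge $e\in F_1$ with the property that swapping $e$ between $F_1$ and $F_2$ mimics moving $t_2$ across the interface; every exchange in $M_\circ/t_1$ that would use $t_2$ is then performed in $M$ using $e$ as a proxy. This proxy mechanism is the missing idea that makes the $3$-sum step go through, and nothing in your outline anticipates it.
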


Conjecture~\ref{conj:white} received a significant interest also in algebra due to its connection to toric ideals and Gröbner bases, see~\cite{blasiak2008toric} and~\cite[Chapter 13]{michalek2021invitation} for further details. However, despite all the efforts, White's conjecture remains open even for sequences of length two. In this special setting, Farber, Richter, and Shank~\cite{farber1985edge} verified the statement for graphic and cographic matroids, and noted that their proof does not seem to generalize for regular matroids. Andres, Hochst\"{a}ttler and Merkel~\cite{andres2014base} formulated White's conjecture for regular matroids as a separate question, and noted that Seymour's decomposition theorem~\cite{seymour1980decomposition} might help to find a proof for it.

White's conjecture has no implications on the minimum number of exchanges needed to transform two equivalent sequences into each other, called their \emph{exchange distance}. For basis pairs, Gabow~\cite{gabow1976decomposing} formulated the following problem, later stated as a conjecture by Wiedemann \cite{wiedemann1984cyclic} and by Cordovil and Moreira~\cite{cordovil1993bases}, and posed as an open problem in Oxley's book~\cite[Conjecture~15.9.11]{oxley2011matroid}.

\begin{conj}[Gabow] \label{conj:gabow}
Let $X_1$ and $X_2$ be disjoint bases of a rank-$r$ matroid $M$. Then, the exchange distance of $(X_1,X_2)$ and $(X_2,X_1)$ is $r$.
\end{conj}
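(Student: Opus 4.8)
The plan is to handle the two inequalities separately. The lower bound — that no sequence of fewer than $r$ symmetric exchanges turns $(X_1,X_2)$ into $(X_2,X_1)$ — is elementary and holds for every matroid, so I would dispatch it first. Let $(X_1,X_2)=(P_0,Q_0),(P_1,Q_1),\dots,(P_\ell,Q_\ell)=(X_2,X_1)$, where each $(P_t,Q_t)$ is obtained from $(P_{t-1},Q_{t-1})$ by a symmetric exchange. A symmetric exchange only relocates elements between the two members of the pair, so $P_t\cup Q_t=X_1\cup X_2$ and $P_t\cap Q_t=\emptyset$ for all $t$; since $X_1\cap X_2=\emptyset$ and $|P_t|=r$, the potential $\phi_t:=|P_t\cap X_2|$ satisfies $\phi_0=0$ and $\phi_\ell=r$. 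Step $t$ replaces $P_{t-1}$ by $P_{t-1}-e+f$ with $e\in P_{t-1}$ and $f\in Q_{t-1}$, so $\phi_t-\phi_{t-1}=\mathbf{1}[f\in X_2]-\mathbf{1}[e\in X_2]\in\{-1,0,1\}$, whence $r=\phi_\ell-\phi_0\le\ell$. The upper bound carries all the difficulty and is treated next.

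For the upper bound I would first reformulate. The same potential pins down what a length-$r$ solution must look like: each of the $r$ steps has to raise $\phi$ by exactly $1$, so each step removes an element of $X_1$ from the first member and inserts an element of $X_2$; moreover an element of $X_1$ can never re-enter the first member (only $X_2$-elements are ever inserted there) and an element of $X_2$ never leaves it once inserted, so each element of $X_1$ is removed exactly once and each element of $X_2$ inserted exactly once. Hence it is necessary and sufficient to find orderings $X_1=\{x_1,\dots,x_r\}$ and $X_2=\{y_1,\dots,y_r\}$ such that for every $0\le k\le r$ both
\[
P_k:=\{x_{k+1},\dots,x_r\}\cup\{y_1,\dots,y_k\},\qquad Q_k:=\{x_1,\dots,x_k\}\cup\{y_{k+1},\dots,y_r\}
\]
are bases of $M$ (note $P_0=X_1$, $P_r=X_2$, $P_k\cup Q_k=X_1\cup X_2$, and that $Q_k$ being a basis is \emph{not} implied by $P_k$ being one); the $k$-th exchange then swaps $x_k$ for $y_k$. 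Equivalently, one wants a perfect matching $\{x_k\leftrightarrow y_k\}$ of the exchange graph of $(X_1,X_2)$ together with an ordering of its edges along which every prefix is ``doubly feasible''.

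I would attack the existence of such orderings by induction on $r$ inside $M_0:=M|(X_1\cup X_2)$, whose ground set is the disjoint union of the bases $X_1$ and $X_2$. Brualdi's symmetric exchange theorem, valid in every matroid, supplies $x_1\in X_1$ and $y_1\in X_2$ with $X_1-x_1+y_1$ and $X_2+x_1-y_1$ bases; performing this first swap and committing $y_1$ to the first member and $x_1$ to the second leaves the task of carrying $(X_1-x_1,X_2-y_1)$ to $(X_2-y_1,X_1-x_1)$. The obstacle — and this is exactly where I expect the real work to lie — is that this residual instance does not obviously live in a single rank-$(r-1)$ matroid: the future partners of the committed $y_1$ must be independent in $M_0/y_1$, those of the committed $x_1$ must be independent in $M_0/x_1$, and $M_0/y_1\setminus x_1\ne M_0/x_1\setminus y_1$ in general. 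So the induction forces one to choose the first swap (and all later ones) so that these two minors agree on $X_1\cup X_2-x_1-y_1$, i.e.\ to exhibit a \emph{serializable} matching of the exchange graph. Already the weaker statement that $(X_1,X_2)$ and $(X_2,X_1)$ are joined by \emph{some} symmetric-exchange sequence is an instance of White's conjecture, unknown for arbitrary matroids, and compressing such a sequence down to length $r$ is a further nontrivial step; for structured classes the obstacle can be removed by decomposition (for regular matroids, via Seymour's decomposition into graphic, cographic, and $R_{10}$ pieces, checking that the serial exchange property survives $1$-, $2$-, and $3$-sums and holds on the building blocks), but in full generality the upper bound is the open heart of the statement.
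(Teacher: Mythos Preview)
The statement you were given is Conjecture~\ref{conj:gabow}, which the paper states as an \emph{open} conjecture; the paper does not prove it in general. What the paper does prove is the special case of regular matroids (Theorem~\ref{thm:reggabow}). Your write-up is consistent with this: you give a clean proof of the lower bound $\ell\ge r$ (the potential argument is correct and standard), you correctly reformulate the upper bound as the serial symmetric exchange property, and you then honestly flag that the upper bound is the open heart of the problem and that the naive induction fails because the two contractions $M_0/x_1$ and $M_0/y_1$ need not agree. That diagnosis is accurate.

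Your final sentence---that for regular matroids one should proceed via Seymour's decomposition, checking the building blocks and that the property survives 1-, 2-, and 3-sums---is exactly the high-level strategy the paper executes for Theorem~\ref{thm:reggabow}. The paper's actual proof, however, requires substantially more than ``checking that the property survives sums'': 1-sums and 2-sums are handled by tight-set and scheduling arguments (Lemmas~\ref{lem:tight} and~\ref{lem:2sum}), but 3-sums are the real obstacle, and the paper needs (i) the Aprile--Fiorini refinement of Seymour's theorem to guarantee a graphic piece at a leaf of the decomposition tree, (ii) reductions eliminating triads and tight sets so that this graphic piece comes from a simple 4-regular graph (Proposition~\ref{prop:reductions}), (iii) a strengthened graphic result allowing the last exchange to be prescribed (Theorem~\ref{thm:graphicgabow}), and (iv) a careful splicing of the two sub-sequences across the 3-sum. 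So while your outline points in the right direction for the regular case, it is a sketch of a strategy rather than a proof; the genuine content lies in making 3-sums work, which your proposal does not address.
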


Note that Conjecture~\ref{conj:gabow} would imply Conjecture~\ref{conj:white} for sequences of the form $(X_1,X_2)$ and $(X_2,X_1)$. Since the rank of the matroid is a trivial lower bound on the minimum number of exchanges needed to transform $(X_1,X_2)$ into $(X_2,X_1)$, the essence of Gabow's conjecture is that rank many steps might always suffice. This also implies that the conjecture can be rephrased as a generalization of the symmetric exchange axiom as follows: If $X_1$ and $X_2$ are bases of the same matroid, then there are orderings $X_1=(x^1_1,\dots,x^1_r)$ and $X_2=(x^2_1,\dots,x^2_r)$ such that $\{x^1_1,\dots,x^1_i,x^2_{i+1},\dots,x^2_r\}$ and $\{x^2_1,\dots,x^2_i,x^1_{i+1},\dots,x^1_r\}$ are bases for $i=0,\dots,r$. This property is often referred to as \emph{serial symmetric exchange property}. 

\medskip

The focus of this paper is on regular matroids, a fundamental class that generalizes graphic and cographic matroids. Our main tool is Seymour's celebrated decomposition theorem, which gives a method for decomposing any regular matroid into matroids which are either graphic, cographic, or isomorphic to a simple 10-element matroid. Regular matroids play a crucial role in both matroid theory and optimization, since those are exactly the matroids that can be represented over $\bR$ by totally unimodular matrices~\cite[Theorem 6.6.3]{oxley2011matroid}. This connection has far reaching implications, e.g.\ the fastest known algorithm for testing total unimodularity of a matrix is based on the ability to find such a decomposition if one exists~\cite{truemper1990decomposition}. 

Interest in exchange properties of matroids originally arose in part from the fact that they serve as an abstraction of pivot algorithms of linear algebra~\cite{white1980unique,greene1975some}. In the past decades, however, problems appeared in many different areas of computer science and mathematics that are actually based on exchange properties of matroid bases, though these problems have never been explicitly linked together in previous work. Implicitly, one of the goals of the paper is to draw attention to these connections.

\subsection{The Role of Equivalent Sequences}
\label{sec:motivation}

Though finding a sequence of symmetric exchanges between basis sequences may seem to be a structural question purely on matroids, it has been identiﬁed as the key ingredient in a range of problems. In what follows, we give an overview of main applications where the reconfiguration of basis sequences shows up. 

\subsubsection*{Sampling Common Bases}

Mihail and Vazirani conjectured that the basis exchange graph of any matroid has edge expansion at least one, see~\cite{feder1992balanced}. The motivation behind the conjecture was to solve the problem of approximately sampling from bases of a matroid. After the appearance of the conjecture, a long line of work concentrated on designing approximation algorithms to count the number of bases of a matroid, and efficient sampling algorithms were developed for various special classes. Most of these results relied on the Markov Chain Monte Carlo technique: for any matroid, the basis exchange property defines a natural random walk which mixes to the uniform distribution over all bases, also known as ``down-up'' random walk in the context of high-dimensional expanders. In a breakthrough result, Anari et al.~\cite{anari2019log} verified the conjecture of Mihail and Vazirani, and thus gave an efficient approximate sampling algorithm for all matroids.

Sampling common bases of two matroids is also of interest. In~\cite{anari2021log}, Anari, Gharan and Vinzant gave a deterministic polynomial time $2^{O(r)}$-approximation algorithm for the number of common bases of any two matroids of rank $r$. Unlike in the case of a single matroid, the intersection of two matroids does not satisfy the exchange property. Even worse, there are examples showing that the symmetric difference of a common basis with any other common basis might be large, hence there is no hope for defining a simple down-up-type random walk in general. 

Partitions of the ground set of a matroid $M$ into two disjoint bases can be identified with common bases of $M$ and its dual $M^*$. From this perspective, White's conjecture states that there is a sequence of exchanges between any pair of common basis of $M$ and $M^*$. Thus verifying Conjecture~\ref{conj:white} would open up the possibility for a natural down-up random walk for matroid intersection in the special case when the two matroids are dual to each other.

\subsubsection*{Equitability of Matroids}

There are several further problems that aim at a better understanding of the structure of bases. The probably most appealing one is the \emph{Equitability Conjecture} for matroids that provides a relaxation of both Conjecture~\ref{conj:white} and Conjecture~\ref{conj:gabow}. A matroid whose ground set $E$ partitions into disjoint bases is called \emph{equitable} if for any set $Z\subseteq E$, there exists a partition into disjoint bases $E=X_1\cup\dots\cup X_k$ such that $\lfloor|Z|/k\rfloor\leq|X_i\cap Z|\leq\lceil|Z|/k\rceil$. The Equitability Conjecture states that every matroid whose ground set partitions into disjoint bases is equitable. 

The existence of such a partition would follow from both Conjecture~\ref{conj:white} and Conjecture~\ref{conj:gabow}. To see this, first observe that it suffices to consider the case $k=2$, since for general $k$ the statement then follows by repeated application of the problem restricted to $X_i\cup X_j$ for $1\leq i<j\leq k$; see \cite[Discussion page]{egres_open_equit} for details. Then, for any partition of the ground set into two bases $X_1$ and $X_2$, both Conjecture~\ref{conj:white} and Conjecture~\ref{conj:gabow} imply the existence of a sequence of symmetric exchanges that transforms $(X_1,X_2)$ into $(X_2,X_1)$. One of the basis pairs of the sequence thus obtained must satisfy $\lfloor |Z|/2\rfloor \leq |X_i\cap Z|\leq \lceil |Z|/2\rceil$ for $i=1,2$.

Apart from the matroid classes for which Conjecture~\ref{conj:white} or \ref{conj:gabow} was settled, the Equitability Conjecture was verified for base orderable matroids~\cite{fekete2011equitable} only. It is worth mentioning that equitable partitions are closely related to fair representations, introduced by Aharoni, Berger, Kotlar, and Ziv~\cite{aharoni2017faira}. For further details, we refer the interested reader to~\cite{berczi2022exchange,fekete2011equitable,egres_open_equit}.

\subsubsection*{Fair Allocations}

Fair allocation of indivisible goods has received a significant interest in the past decade. In such problems, the goal is to find an allocation of a set $E$ of $m$ indivisible items among $n$ agents so that each agent finds the allocation fair. The degree of equality can be measured in various ways, and envy-freeness, introduced by Foley~\cite{foley1966resource} and Varian~\cite{varian1973equity}, is one of the most natural fairness concepts. An allocation is considered to be \emph{envy-free} (EF) if each agent finds the value of her bundle at least as much as that of any other agent. 

Though envy-freeness imposes a very natural criterion for the fairness of an allocation, such a solution may not exist. As a workaround, several relaxations have been proposed, including \emph{envy-freeness up to one good} (EF1). Biswas and Barman~\cite{biswas2018fair} and Dror, Feldman, and Segal-Halevi~\cite{dror2023fair} studied the existence of EF1 allocations under matroid constraints. If the agents have different matroid constraints, then such an allocation might not exist even for two agents with identical valuations. Therefore, it is natural to consider identical matroid constraints. Given a matroid $M$ on the set of items, an allocation is called \emph{feasible} if the bundle of each agent forms an independent set of $M$. Deciding the existence of a feasible EF1 allocation as well as finding one algorithmically are interesting open problems that have only been solved for very restricted cases: for partition matroids~\cite{biswas2018fair}, for base orderable matroids with identical valuations~\cite{biswas2018fair}, for base orderable matroids with two agents~\cite{dror2023fair}, and for base orderable matroids with three agents and binary valuations, i.e.\ when each item has value zero or one for every agent~\cite{dror2023fair}.

The problem remains open even when the agents share the same binary valuation. In such a case, there exists a $Z\subseteq E$ such that the value of any subset $X$ of items is $|X\cap Z|$ for every agent, and a feasible EF1 allocation corresponds to a partition of the ground set into $n$ independent sets $X_1,\dots,X_n$ such that $\lfloor |Z|/n\rfloor \leq |X_i\cap Z|\leq \lceil |Z|/n\rceil$ for every $1\leq i\leq n$. The existence of such an allocation would follow from the Equitability Conjecture, and hence from both Conjecture~\ref{conj:white} and Conjecture~\ref{conj:gabow}. The only difference compared to the setting of the Equitability Conjecture is that here we seek for a partition into independent sets instead of bases. However, by possibly taking the direct sum of $M$ with a free matroid and then truncating it, the sets $X_i$ can be assumed to form bases of the matroid. 

\subsubsection*{Carathéodory Rank of Matroid Base Polytopes}

A polyhedron $P\subseteq\bR^n$ has the \emph{integer decomposition property} if for every positive integer $k$, every integer vector $x$ in $kP$ can be written as $x=\sum_{i=1}^t\lambda_i x_i$, where each $\lambda_i$ is a positive integer together satisfying $\sum_{i=1}^t\lambda_i=k$, and each $x_i$ is an integer vector in $P$. This notion was introduced by Baum and Trotter~\cite{baum1978integer}, and plays an important role in the theory and application of integer programming \cite[Section 22.10]{schrijver1998theory} as well as in the study of toric varieties~\cite{viet1997normal}, \cite[Chapter 2]{cox2011toric}). For a polyhedron $P$ with the integer decomposition property, the smallest number $\car(P)$ such that we can take $t\leq \car(P)$ for every $k$ and every $x\in kP$ is called the \emph{Carath\'eodory rank} of $P$. In~\cite{cunningham1984testing}, Cunningham asked whether a sum of bases in $M$ can always be written as a sum using at most $n$ bases, where $n$ is the cardinality of the ground set, which was answered in the affirmative by Gijswijt and Regts~\cite{gijswijt2012polyhedra}. That is, $\car(P)\leq n$ holds where $P$ is the convex hull of the incidence vectors of bases of $M$. 

For a polytope $P\subseteq\bR^n$ with vertices $v_1,\dots,v_p$, a \emph{triangulation} $\cT$ of $P$ is a collection of simplices on the vertices of $P$ such that (i) if $T\in\cT$ then all faces of $T$ are in $\cT$, (ii) if $T_1,T_2\in\cT$ then $T_1\cap T_2$ is a face of both $T_1$ and $T_2$, and (iii) $\bigcup_{T\in \cT}\conv(T)=P$. A triangulation $\cT$ is \emph{unimodular} if the volume of every highest dimensional simplex of $\cT$ are the same. As a geometric variant of Conjecture~\ref{conj:white}, Haws~\cite{haws2009matroid} conjectured that every matroid base polytope has a unimodular triangulation. One motivation behind the conjecture was that the existence of such a triangulation implies a bound of $n$ on the Carath\'eodory rank of a connected matroid base polytope, a result that was proved only later in \cite{gijswijt2012polyhedra}. Recently, Backman and Liu~\cite{backman2023regular} verified Haws' conjecture by showing that every matroid base polytope admits a regular unimodular triangulation.

\subsubsection*{Toric Ideals}

Describing minimal generating set of a toric ideal is a well-studied and difficult problem. Consider a matroid $M=(E,\cB)$ where $E$ denotes the ground set and $\cB$ is the family of bases of $M$. For a field $\bK$, let $S_M$ denote the polynomial ring $\bK[y_B\mid B\in\cB]$. The \emph{toric ideal associated to $M$} is the kernel of the $\bK$-homomorphism $\varphi_M\colon S_M\to\bK[x_e:e\in E]$ given by $y_B\mapsto\prod_{e\in B} x_e$. Assume now that the basis pair $(Y_1,Y_2)$ is obtained from $(X_1,X_2)$ by a symmetric exchange, that is, $Y_1=X_1-e+f$ and $Y_2=X_2+e-f$ for some $e\in X_1- X_2$ and $f\in X_2- X_1$. Then, the quadratic binomial corresponding to the symmetric exchange is $y_{X_1}y_{X_2}-y_{Y_1}y_{Y_2}$. It is not difficult to see that such binomials belong to the ideal $I_M$, and White~\cite{white1980unique} conjectured that they in fact generate $I_M$. 

More precisely, White stated three conjectures of growing diﬃculty. Using the notation of \cite{white1980unique}, two sequences $\cX,\cY$ of bases of equal length are in relation $\sim_1$ if $\cY$ can be obtained from $\cX$ by a composition of symmetric exchanges, in relation $\sim_2$ if $\cY$ can be obtained from $\cX$ by a composition of symmetric exchanges and permutations of the order of the bases, and in relation $\sim_3$ if $\cY$ can be obtained from $\cX$ by a composition of symmetric exchanges of subsets. Let $TE(i)$ denote the class of matroids for which every two compatible sequences $\cX,\cY$ are in relation $\cX\sim_i\cY$. In algebraic terms, a matroid belongs to $TE(3)$ if and only if its is generated by quadratic binomials, and it belongs to $TE(2)$ if and only if its toric ideal is generated by quadratic binomials corresponding to symmetric exchanges. Property $TE(1)$ is a counterpart of $TE(2)$ for the noncommutative polynomial ring $S_M$. The three conjectures of White state that $TE(i)$ is the class of all matroids for $i=1,2,3$. In particular, Conjecture~\ref{conj:white} corresponds to the choice $i=1$.

\subsubsection*{Reconfiguration Problems}

In combinatorial reconfiguration problems, the goal is to study the solution space of an underlying combinatorial optimization problem. The solution space can be represented by a graph, where vertices correspond to feasible solutions and there is an edge between two vertices if the corresponding solutions can be obtained from each other by an elementary step, defined specifically for the given problem. Reconfiguration problems concern the \emph{reachability} of a solution from another in this graph, and if such a path exists, then finding a \emph{shortest} one between them. In recent years, such problems have attracted great attention due to their connection to various topics such as optimization, counting, enumeration, and sampling. For further details, we refer the interested reader to~\cite{van2013complexity,nishimura2018introduction}. 

The vertices of the \emph{exchange graph} of a matroid correspond to basis sequences of a given length, two vertices being connected by an edge if the corresponding basis sequences can be obtained from each other by a single symmetric exchange. In this context, Conjecture~\ref{conj:white} aims at characterizing reachability in the exchange graph, and states that the connected components are exactly the equivalence classes of compatibility. 

An analogous problem can be formulated for the intersection of two matroids, i.e.\ given two common bases of two matroids, decide if one can be obtained from the other by always changing a single element while maintaining independence in both matroids. Such a sequence of exchanges is known to exist in special cases, e.g.\ for arborescences, or more generally, for $k$-arborescences~\cite{kobayashi2023reconfiguration}. Recently, the problem was shown to be oracle hard by Kobayashi, Mahara, and Schwarcz~\cite{kobayashi2023reconfiguration}. For sequences of length two, Conjecture~\ref{conj:white} is the special case of the common basis reconfiguration problem when the two matroids are dual to each other.

\subsection{Our Results}
\label{sec:results}

Motivated by the signiﬁcance of equivalent basis sequences in various applications and by the fact that it was formulated as an interesting open problem in~\cite{farber1985edge,andres2014base}, we study the exchange distance of basis pairs in regular matroids. First, we give a polynomial upper bound on the exchange distance of compatible basis pairs, which proves Conjecture~\ref{conj:white} for sequences of length two in regular matroids. Our proof is algorithmic, which allows us to determine a sequence of symmetric exchanges that transforms a given pair of bases into another in polynomial time. As usual in matroid algorithms, we assume that the matroid is given by an independence oracle and the running time is measured by the number of oracle calls and other conventional elementary steps. For the sake of simplicity, by ``polynomial number'' of oracle calls we mean ``polynomial in the number of elements of the ground set''.

\begin{thm}\label{thm:regwhite}
Let $\cX=(X_1,X_2)$ and $\cY=(Y_1,Y_2)$ be compatible basis pairs of a regular matroid $M$ of rank $r\geq 2$. Then, there exists a sequence of symmetric exchanges that transforms $\cX$ into $\cY$, has length at most $2\cdot r^2$, and uses each element at most $4\cdot (r-1)$ times. Furthermore, such a sequence can be determined using a polynomial number of oracle calls.
\end{thm}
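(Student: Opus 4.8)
The plan is to exploit Seymour's decomposition theorem, which expresses any regular matroid as built up from graphic matroids, cographic matroids, and copies of $R_{10}$ via $1$-, $2$-, and $3$-sums. Accordingly, I would structure the proof as an induction on the size of the ground set. The base cases are the indecomposable pieces: for graphic and cographic matroids the statement (for basis pairs) is essentially the Farber--Richter--Shank result \cite{farber1985edge}, which I would revisit and quantify to extract the claimed bounds of $2r^2$ on the length and $4(r-1)$ on the number of uses of each element; for $R_{10}$, which has only $10$ elements and a bounded rank, one can in principle verify the statement by a finite case check (or a computer search) and absorb it into the constants. The inductive step is where the bulk of the work lies: given a $k$-sum decomposition $M = M_1 \oplus_k M_2$ with $k \in \{1,2,3\}$, I need to take compatible basis pairs $\cX, \cY$ of $M$, project/restrict them to basis pairs (or short basis sequences) in $M_1$ and $M_2$, recursively reconfigure on each side, and then lift the resulting sequences of symmetric exchanges back to $M$.

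The key steps, in order, are as follows. First, establish the relationship between bases of $M$ and bases of $M_1$ and $M_2$ across a $k$-sum: a basis $B$ of $M$ induces, after adding/removing a controlled number of elements of the small common part $S$ (where $|S| \le 3$), an independent set or basis in each $M_i$. I would need a clean bookkeeping lemma that, given the pair $(X_1, X_2)$ of $M$, produces compatible basis sequences on each side whose ``profile'' on $S$ matches up. Second, handle the discrepancy on the boundary set $S$: since $|S|$ is a constant ($\le 3$), the number of distinct configurations is bounded, so one can first perform a bounded number of symmetric exchanges in $M$ to normalize $\cX$ and $\cY$ so that they agree on $S$ — or, more carefully, so that the induced pairs on $M_1$ and on $M_2$ are genuinely compatible. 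Third, apply the induction hypothesis to $M_1$ and to $M_2$ separately, obtaining sequences of symmetric exchanges of lengths at most $2 r_1^2$ and $2 r_2^2$ using each element at most $4(r_i - 1)$ times. Fourth, lift each symmetric exchange performed inside $M_i$ to a symmetric exchange (or a short bounded-length sequence of symmetric exchanges) in $M$; here one must check that an exchange legal in $M_i$ remains legal in $M$, possibly after pairing it with a compensating exchange involving an element of $S$. Finally, add up the costs: with $r_1 + r_2 \approx r + O(1)$, one gets length roughly $2 r_1^2 + 2 r_2^2 + O(1) \le 2 r^2$ (using $r_1^2 + r_2^2 \le (r_1 + r_2 - 1)^2$ type inequalities plus the slack to absorb boundary corrections), and similarly the per-element usage telescopes to at most $4(r-1)$. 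The polynomial running time follows because Seymour's decomposition can be computed in polynomial time \cite{truemper1990decomposition}, the recursion tree has polynomially many nodes, and each lifting/normalization step uses only a polynomial number of oracle calls.

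The main obstacle I anticipate is the lifting step combined with maintaining the exact constants. Lifting a symmetric exchange from $M_i$ to $M$ is delicate for $2$- and $3$-sums because the element being exchanged may interact with the boundary set $S$: an exchange $X_i - e + f$ that is valid in $M_i$ need not directly correspond to an exchange in $M$ unless the ``$S$-part'' of the basis is handled simultaneously, which may force us to replace one exchange in $M_i$ by two exchanges in $M$ (one genuine, one that fixes up $S$). Controlling how often this doubling happens — and ensuring it does not blow the length past $2r^2$ or the per-element usage past $4(r-1)$ — is the crux. A secondary difficulty is that the induced objects on $M_1$ and $M_2$ may naturally be short basis \emph{sequences} rather than basis \emph{pairs}, or may fail compatibility by a small amount on $S$; reconciling this (perhaps by proving a slightly stronger statement by induction, e.g.\ one that allows a bounded number of prescribed elements to sit in controlled positions, or that tracks basis sequences of length two together with their $S$-profiles) is likely necessary to make the induction go through cleanly. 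I would therefore expect the actual theorem proved by induction to be a technically strengthened version of Theorem~\ref{thm:regwhite}, from which the stated clean form follows.
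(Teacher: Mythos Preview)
Your high-level plan---induction via Seymour's decomposition, with graphic/cographic/$R_{10}$ as base cases and lifting across $k$-sums as the inductive step---matches the paper's outer structure, and your treatment of $1$- and $2$-sums is essentially what the paper does (Corollary~\ref{cor:1sum}, Lemma~\ref{lem:2sum}). But for $3$-sums there is a genuine gap: the generic ``lift each exchange, fix up the boundary with a compensating move'' scheme you describe is precisely what does \emph{not} go through directly, and the paper's main contribution is a way around it.

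The problem is this. Across a $3$-sum along a triangle $T=\{t_1,t_2,t_3\}$, each exchange on one side that changes the $T$-profile must be paired with an exchange on the other side, and you have no a priori control over how many such boundary crossings occur in an arbitrary recursive sequence. Worse, the strengthened inductive hypothesis you would need---something like ``an $F$-avoiding exchange sequence for $|F|\le 3$''---is simply false for general regular matroids (the paper's Remark after Theorem~\ref{thm:graphicwhite} already shows it fails in $M(K_4)$ for $F$ a pair of disjoint edges, and no such strengthening is proved for regular $M$). The paper never attempts a symmetric lifting for $3$-sums. Instead it uses three ingredients you are missing: (i) the Aprile--Fiorini refinement of Seymour's theorem (Theorem~\ref{thm:af1}), combined with McGuinness's dual-of-$3$-sum formula and a \dy observation, to guarantee that after reductions one side of the $3$-sum is the \emph{graphic} matroid of a simple $4$-regular graph (Proposition~\ref{prop:reductions}); (ii) a strengthened $F$-avoiding exchange theorem proved only for graphic matroids with $|V(F)|\le 3$ (Theorem~\ref{thm:graphicwhite}); and (iii) a special partition lemma for $4$-regular graphs (Lemma~\ref{lem:4reg}) producing a ``universal'' pair $(F^\bullet_1,F^\bullet_2)$ and a single switching edge $e$ that absorbs \emph{all} boundary crossings from the regular side. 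With these, the graphic side is reconfigured only twice (once to $(F^\bullet_1,F^\bullet_2)$ and once back), and the regular side is handled by plain induction on $M_\circ/t_1$; no per-exchange boundary bookkeeping is needed. Your proposal does not contain any of (i)--(iii), and without them the $3$-sum case---and hence the induction---does not close.
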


A fine grained analysis of the algorithm shows that the number of steps can be bounded better when the basis pairs are inverses of each other. Our second result is an improved upper bound on the exchange distance of such pairs, which proves Conjecture~\ref{conj:gabow} for regular matroids.

\begin{thm}\label{thm:reggabow}
Let $X_1,X_2$ be disjoint bases of a regular matroid $M$ of rank $r$. Then, there exists a sequence of symmetric exchanges that transforms $(X_1,X_2)$ into $(X_2,X_1)$ and has length $r$. Furthermore, such a sequence can be determined using a polynomial number of oracle calls.
\end{thm}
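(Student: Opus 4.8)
The natural strategy is to bootstrap from Theorem~\ref{thm:regwhite}: since $(X_1,X_2)$ and $(X_2,X_1)$ are clearly compatible, we already know a sequence of symmetric exchanges of length $O(r^2)$ exists. The point of Theorem~\ref{thm:reggabow} is the sharp bound $r$, which is the serial symmetric exchange property reformulated in the introduction. So the real work is to show that Seymour's decomposition can be used not just to build \emph{some} sequence but to control its length. The plan is to induct on the Seymour decomposition tree of $M$. For the base cases — graphic, cographic, and the sporadic $R_{10}$ matroid — Gabow's conjecture is known (graphic and cographic follow from the work of Farber--Richter--Shank and Wiedemann/Kajitani-type arguments, and $R_{10}$ can be checked directly or by a short case analysis), so one would cite or quickly verify these. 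The inductive step must handle $1$-, $2$-, and $3$-sums: given disjoint bases $X_1,X_2$ of $M = M_1 \oplus_k M_2$, one needs to split the pair into disjoint basis pairs of (slightly modified) $M_1$ and $M_2$, apply induction to each to get length-$r_1$ and length-$r_2$ serial exchange sequences, and then \emph{merge} them into a single length-$r$ sequence for $M$, where $r = r_1 + r_2 - (\text{overlap})$ depending on the sum type.

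The key technical device, which I expect the paper to have developed in earlier sections, is a way of encoding a serial symmetric exchange sequence as a combinatorial object — e.g.\ a bijection between $X_1$ and $X_2$ together with an ordering, or equivalently a perfect matching in an associated exchange graph with an acyclic-type orientation — so that ``serial symmetric exchange of length $r$'' becomes a purely structural statement that can be transferred across the $k$-sum operation. In the graphic case this is exactly the statement that two edge-disjoint spanning trees admit a simultaneous edge exchange ordering, which is classical; the regular case should reduce, via the representing totally unimodular matrix and the structure of $k$-sums of such matrices, to gluing these orderings along the (at most $3$) elements shared by the two sides of the sum. Concretely, for a $3$-sum $M_1 \oplus_3 M_2$ along a triangle, one must choose the decomposition of $X_1,X_2$ so that the three ``interface'' elements are handled consistently on both sides, and then interleave the two exchange sequences so that each interface element is moved exactly once overall rather than once per side — this is what prevents the length from blowing up to $r_1 + r_2$.

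The main obstacle, I expect, is precisely this merging step for the $2$- and $3$-sums: one needs the two inductively obtained serial sequences to be \emph{compatible at the interface}, i.e.\ to agree on when and how the shared elements get swapped, and a priori the induction gives no such coordination. The fix is presumably to strengthen the induction hypothesis — rather than proving ``there exists a serial exchange sequence of length $r$,'' prove ``for every prescribed behavior of the interface elements (consistent with being a valid serial sequence), there exists one realizing it,'' or something in this spirit, so that the two sides can be forced to match up. Verifying that this strengthened hypothesis survives each of the three sum operations, and that the sporadic $R_{10}$ base case satisfies it, is the crux. The algorithmic claim then comes for free: Seymour's decomposition is computable in polynomial time (Truemper), each base case admits a polynomial-time construction, and the merging step is a polynomial combinatorial manipulation, so unwinding the recursion yields a polynomial-time algorithm producing the length-$r$ sequence.
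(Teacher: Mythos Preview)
Your high-level strategy --- induct via Seymour's decomposition, handle the sums by merging serial sequences at the interface --- matches the paper's. But several concrete points in your plan diverge from what actually works, and one of them is a real gap.

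First, the paper does \emph{not} induct directly on the Seymour tree. It first applies the reductions from Section~\ref{sec:reductions} (tight sets, 2-sums, triads, and --- via duality --- triangles) to reduce to a 3-connected $M$ with no circuit or cocircuit of size at most three. Only then does it invoke structure theory, and crucially it uses the Aprile--Fiorini refinement (Proposition~\ref{prop:reductions}), not raw Seymour: this guarantees $M = M_\circ \oplus_3 M_\bullet$ where $M_\bullet$ is the graphic matroid of a simple \emph{4-regular} graph. The 4-regularity is not cosmetic; it is what makes the counting $r_{M_\circ}+r_{M_\bullet}-2=r$ come out exactly.

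Second --- and this is the gap --- your proposed fix of strengthening the induction hypothesis to ``for every prescribed behavior of the interface elements, a serial sequence exists'' is both stronger than needed and not what the paper proves. The actual strengthening is \emph{asymmetric} and applies only to the graphic side: Theorem~\ref{thm:graphicgabow} says that in a graphic matroid one can choose any single element $h$ to be exchanged in the \emph{last} step. Neither $R_{10}$ nor the regular piece $M_\circ$ needs any strengthening; they use only the bare inductive statement. Your symmetric ``every prescribed interface behavior'' version would have to be checked for $R_{10}$ and for all regular matroids, which is exactly the problem you are trying to solve.

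Third, the merging mechanism is not an interleaving of two symmetric sequences matched at three shared elements. The interface elements $T=\{t_1,t_2,t_3\}$ are not in the ground set of $M$ at all, so ``each interface element is moved exactly once overall'' does not parse. What the paper does: take the inductive serial sequence in $M_\circ/t_2$ from $(X_1^\circ+t_1,X_2^\circ)$ to $(X_2^\circ,X_1^\circ+t_1)$; exactly one step of this sequence uses $t_1$. At that step, insert the entire graphic sequence for $M_\bullet$ (arranged via Theorem~\ref{thm:graphicgabow} to use a specific $t_j$ last), and fuse the $t_1$-step of the $M_\circ$ side with the $t_j$-step of the $M_\bullet$ side into a single exchange in $M$. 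The lengths then add as $r_{M_\circ/t_2} + r_{M_\bullet} - 1 + 1 = r$.

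The ``bijection plus ordering / matching in an exchange graph'' encoding you sketch is not used and is not needed.
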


Our results give the ﬁrst polynomial bound on the exchange distance of basis pairs and are the first to settle the conjectures of White and Gabow in regular matroids. We hope that our paper will help proving White's conjecture in regular matroids for sequences of arbitrary length, as well as obtaining better bounds for the exchange distance of basis pairs.

\subsection{Overview of Techniques}
\label{sec:techniques}

We give a high-level overview of the proofs of Theorem~\ref{thm:regwhite} and Theorem~\ref{thm:reggabow}.

\paragraph{Connectivity and Cogirth}

The ﬁrst step in proving our main results is to deduce structural properties of regular matroids that allow us to reduce the size of the problem. First, we show that the basis pairs can be assumed to consist of disjoint bases, for if not, then the problem size can be decreased by contracting $X_1\cap X_2=Y_1\cap Y_2$. We then consider tight sets, where in a matroid $M$ over a ground set $E$ a subset $Z\subseteq E$ is \emph{tight} if $|Z|=2\cdot r_M(Z)$. We show that if the basis pairs cover a tight set, then the problem can be divided into smaller subproblems on the restriction $M|Z$ and contraction $M/Z$. As a corollary, we get that it is enough to consider 2-connected matroids. Solving the problem for the 2-sum of regular matroids is based on a similar idea, but merging the solutions for the subproblems is significantly more involved. We explain how to schedule the exchanges on the two sides of the 2-sum in such a way that the basis pairs fit together at each step, meaning that they form a basis pair of the original matroid. This observation eventually reduces the problem to the case of 3-connected matroids. While the above simplifications are well-understood, our main contribution is to show that small cocircuits can also be excluded. More precisely, we prove that if the basis pairs cover a cocircuit of size at most three, then the size of the problem can be decreased by contracting and deleting certain elements of the cocircuit. 

For almost all of these operations, we prove a stronger statement that allows certain elements not to be involved in the exchange sequence. This observation will play a crucial role in the proof by providing control over the choice of symmetric exchanges to be used. Besides reducing the problem to 3-connected matroids of cogirth at least four, all the reduction steps can be performed using a polynomial number of oracle calls which is essential for achieving an efficient algorithm.

\paragraph{Graphic Matroids}

Though it is not stated explicitly, the algorithms of \cite{farber1985edge} and \cite{blasiak2008toric} that prove White's conjecture for graphic matroids give a sequence of exchanges of length at most $(n-1)^2$. Using the fact that the union of two forests always contains a vertex of degree two or at least four vertices of degree three, we give a formal proof of this bound even under certain restrictions on the set of exchanges that can be used. To the best of our knowledge, this is the first analysis of the algorithm that proves a polynomial running time and hence might be of independent combinatorial interest. 

More precisely, let both $\cX=(X_1,X_2)$ and $\cY=(Y_1,Y_2)$ be partitions of a graph $G=(V,E)$ into two maximal forests. Furthermore, let $F\subseteq (X_1\cap Y_1)\cup(X_2\cap Y_2)$ with $|V(F)|\leq 3$; note that these edges do not have to change positions between the two bases. Building on the reductions along tight sets and cocircuits of size three, we show that there exists a sequence of exchanges of length at most $(|V|-1)^2$ that transforms $\cX$ into $\cY$ and uses none of the edges in $F$. This result will be used in the proof of Theorem~\ref{thm:regwhite} as follows: when the matroid is the 3-sum of a regular and a graphic matroid along a cycle $F$ of length three, then one can solve the two subproblems corresponding to the two sides of the 3-sum while restricting the usage of the elements of $F$, which in turn allows for an efficient merging of the sequences. 

\paragraph{Refined Decomposition Theorem}

Seymour's decomposition theorem provides a way of writing any regular matroid as the 1-, 2- and 3-sums of so-called basic matroids that are graphic, cographic, or isomorphic to $R_{10}$. Andres, Hochst\"{a}ttler and Merkel~\cite{andres2014base} already noted that such a decomposition might be helpful in proving White's conjecture for regular matroids. Unfortunately, without any further information on the structure of the decomposition, solving the problem for the basic matroids does not suffice, since it is not clear how to merge these solutions together.

To overcome these difficulties, we use a recent result by Aprile and Fiorini~\cite{aprile2022regular} that gives a refinement of Seymour's theorem. Roughly speaking, they showed that any 3-connected regular matroid distinct from $R_{10}$ admits a decomposition in which the 3-sums are not using nontrivial cuts of the graphs that correspond to cographic basic matroids. McGuiness~\cite{mcguinness2014base} gave a characterization of the dual of a 3-sum using the so-called \emph{\dy exchange} operation. We observe that applying a \dy exchange corresponding to a trivial cut of the underlying graph transforms a cographic matroid into another. With the help of these results, we can identify a graphic basic matroid that is ``sitting at the end of the decomposition''. Moreover, by relying on the reduction steps for tight sets and triads mentioned earlier, we show that the graph in question can be assumed to be 4-regular. It is truly amazing that all these results come together so nicely, thus narrowing the problem down to a case that we can then tackle.

\paragraph{An Inductive Approach}

Based on our previous observations, we write up the matroid as the 3-sum of a regular matroid and the graphic matroid of a 4-regular graph. The basis pairs of the original instance can be naturally restricted to the two sides of the 3-sum, thus resulting in smaller instances for which one can find desired exchange sequences separately. If not chosen carefully, merging these two sequences to get a solution for the original instance would require too many steps or may even be impossible. On the graphic part, however, we use the strengthening of the statement for graphic matroids which exclude certain elements to take part in the exchanges. This allows us to merge the solutions for the two sides of the 3-sum efficiently.

\subsection{Related Work}
\label{sec:related}

When restricted to sequences of length two, White's conjecture was verified for graphic and cographic matroids by Farber, Richter, and Shank~\cite{farber1985edge}, for transversal matroids by Farber~\cite{farber1989basis}, and for split matroids by Bérczi and Schwarcz~\cite{berczi2022exchange}. For sequences of arbitrary length, Blasiak~\cite{blasiak2008toric} confirmed the conjecture for graphic matroids. It is not difficult to check that the conjecture holds for a matroid $M$ if and only if it holds for its dual $M^*$, therefore Blasiak's result settles the cographic case as well. Further results include lattice path matroids by Schweig~\cite{schweig2011toric}, sparse paving matroids by Bonin~\cite{bonin2013basis}, strongly base orderable matroids by Laso{\'n} and Micha{\l}ek~\cite{lason2014toric}, and frame matroids satisfying a linearity condition by McGuinness~\cite{mcguinness2020frame}. 

Gabow~\cite{gabow1976decomposing} observed that Conjecture~\ref{conj:gabow} holds for partition matroids, transversal matroids, and matching matroids. An easy proof shows that it also holds for strongly base orderable matroids. The graphic case was independently proved by Wiedemann~\cite{wiedemann1984cyclic}, Kajitani, Ueno, and Miyano~\cite{kajitani1988ordering}, and Cordovil and Moreira~\cite{cordovil1993bases}. The cases of sparse paving and split matroids were settled in~\cite{bonin2013basis} and~\cite{berczi2022exchange}, respectively.

For the case of basis pairs, a common generalization of the conjectures of White and Gabow was proposed by Hamidoune~\cite{cordovil1993bases} stating that the exchange distance of compatible basis pairs is at most the rank of the matroid. A strengthening was proposed by Bérczi, Mátravölgyi, and Schwarcz~\cite{berczi2022weighted} who considered a weighted variant of Hamidoune's conjecture and verified it for strongly base orderable matroids, split matroids, spikes, and graphic matroids of wheel graphs.  

An ordered pair $(X_1,X_2)$ of bases of a matroid satisfies the \emph{unique exchange property} if there exists an element $e\in X_1$ for which there is a unique element $f\in X_2$ that can be symmetrically exchanged with $e$. The \emph{unique exchange graph} can be defined for sequences of bases in a straightforward manner, and White~\cite{white1980unique} conjectured that for regular matroids, the connected components of this graph are exactly the equivalence classes of compatibility. The motivation behind this conjecture comes from the study of the bracket ring of a matroid, see~\cite{white1975bracketa,white1975bracketb} for details. McGuinness~\cite{mcguinness2014base} verified that any pair of bases in a regular matroid has the unique exchange property, implying the the unique exchange graph has no isolated vertices. However, Andres, Hochst\"attler and Merkel~\cite{andres2014base} disproved the conjecture, and proposed a relaxation instead in which the element with a unique symmetrically exchangeable pair can be chosen from both $X_1$ and $X_2$. 

\paragraph{Paper Organization}

The rest of the paper is organized as follows. In Section~\ref{sec:prelim}, we recall basic definitions, notation, and some results on the decomposition of regular matroids that we will use in our proofs. In Section~\ref{sec:reductions}, we show how Conjecture~\ref{conj:white} for sequences of length two and Conjecture~\ref{conj:gabow} can be reduced to 3-connected matroids not containing cocircuits of size at most three. Then, in Section~\ref{sec:graphic}, we explain how a quadratic bound on the number of exchanges can be derived for graphs using the aforementioned reductions, and prove strengthenings of White's and Gabow's conjectures for graphic matroids. The rest of the paper is devoted to proving Theorem~\ref{thm:regwhite} and Theorem~\ref{thm:reggabow}. Our proofs rely on the regular matroid decomposition theorem of Seymour. Nevertheless, solving the problems for each matroid in the decomposition in parallel and then simply merging the solutions does not work. The key ingredient that leads to Theorem~\ref{thm:regwhite} and Theorem~\ref{thm:reggabow} is a careful combination of the solutions of these subproblems that results in a sequence of exchanges whose length is polynomially bounded. For ease of reading, we encourage first-time readers to skip the technical parts of Section~\ref{sec:reductions}.

\section{Preliminaries}
\label{sec:prelim}

\paragraph{Basic Notation and Definitions}

Given a ground set $E$, the \emph{difference} of $X,Y\subseteq E$ is denoted by $X-Y$. If $Y$ consists of a single element $y$, then $X-\{y\}$ and $X\cup \{y\}$ are abbreviated as $X-y$ and $X+y$, respectively. The \emph{symmetric difference} of $X$ and $Y$ is defined as $X\triangle Y\coloneqq (X-Y)\cup(Y-X)$. 

\paragraph{Graphs}

Throughout the paper, we consider loopless graphs that might contain parallel edges. For a graph $G=(V,E)$, the \emph{set of edges incident to a vertex $v\in V$} is denoted by $\delta_G(v)$ and the \emph{degree of $v$} is $d_G(v)=|\delta_G(v)|$. We dismiss the subscript if the graph is clear from the context. For a subset $F\subseteq E$, we denote the \emph{set of vertices of the edges in $F$} by $V(F)$. For $X\subseteq V$, we denote by $F[X]$ the \emph{set of edges in $F$ induced by $X$}. The \emph{graph obtained by deleting $F$ and $X$} is denoted by $G-F-X$.
A \emph{cut} of $G$ is a subset $F\subseteq E$ of edges whose deletion increases the number of components. A cut is \emph{trivial} if $F=\delta(w)$ for some $w\in V$ and \emph{nontrivial} otherwise. A graph is called \emph{bispanning} if its edge set can be decomposed into two spanning trees. By a classical result of Tutte~\cite{tutte1961problem} and Nash-Williams~\cite{nash1964decomposition}, a graph $G=(V,E)$ is bispanning if and only if $|E|=2\cdot|V|-2$ and $|E[X]|\leq 2\cdot |X|-2$ for every $\emptyset\neq X\subseteq V$.

\paragraph{Matroids}

For basic definitions on matroids, we refer the reader to~\cite{oxley2011matroid}. A \emph{matroid} $M=(E,\cI)$ is defined by its \emph{ground set} $E$ and its \emph{family of independent sets} $\cI\subseteq 2^E$ that satisfies the \emph{independence axioms}: (I1) $\emptyset\in\cI$, (I2) $X\subseteq Y,\ Y\in\cI\Rightarrow X\in\cI$, and (I3) $X,Y\in\cI,\ |X|<|Y|\Rightarrow\exists e\in Y-X\ s.t.\ X+e\in\cI$. Members of $\cI$ are called \emph{independent}, while sets not in $\cI$ are called \emph{dependent}. The \emph{rank} $r_M(X)$ of a set $X$ is the maximum size of an independent set in $X$. The maximal independent subsets of $E$ are called \emph{bases} and their family is usually denoted by $\cB$. If the matroid is given by its family of bases instead of independent sets, then we write $M=(E,\cB)$. The \emph{dual} of $M$ is the matroid $M^*=(E,\cI^*)$ where $\cI^*=\{X\subseteq E\mid E-X\ \text{contains a basis of $M$}\}$. For technical reasons, we allow the ground set of the matroid to be the empty set, in which case the matroid is simply the \emph{empty matroid} $M=(\emptyset, \{\emptyset\})$.

Let $\cX=(X_1,\dots,X_k)$ and $\cY=(Y_1,\dots,Y_k)$ be sequences of bases of $M$. A sequence of symmetric exchanges that transforms $\cX$ into $\cY$ is called an \emph{$\cX$-$\cY$ exchange sequence}. The \emph{width} of an exchange sequence is the maximum number of occurrences of any element in it. If the symmetric exchanges do not involve the elements in $F\subseteq E$ then the exchange sequence is called \emph{$F$-avoiding}. The \emph{exchange distance} of $\cX$ and $\cY$ is the minimum length of an $\cX$-$\cY$ exchange sequence if one exists and $+\infty$ otherwise.

A \emph{circuit} is an inclusionwise minimal dependent set, while a \emph{loop} is a circuit consisting of a single element. 
A \emph{cocircuit} is an inclusionwise minimal set that intersects every basis, or equivalently, a circuit of the dual matroid. A set is said to be \emph{coindependent} if it contains no cocircuit of the matroid, or equivalently, it is independent of the dual matroid. Two elements $e,f\in E$ are \emph{parallel} if they form a circuit of size two. A circuit of size three is called a \emph{triangle}, while a corcircuit of size three is called a \emph{triad}. A \emph{cycle} of a matroid is a (possibly empty) subset of its ground set which can be partitioned into circuits. For a matroid $M$, we denote its \emph{families of independent sets}, \emph{bases} and \emph{circuits} by $\cI(M)$, $\cB(M)$ and $\cC(M)$, respectively. Unlike in graphs, the intersection of a circuit and a cocircuit of a matroid might have odd size. Nevertheless, the intersection never consists of a single element, see e.g.~\cite[Proposition~2.1.11]{oxley2011matroid}.

\begin{lem}\label{lem:int}
Let $C$ and $T$ be a circuit and a cocircuit of a matroid $M$. Then $|C\cap T|\neq 1$.    
\end{lem}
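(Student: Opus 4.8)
The plan is to argue by contradiction: suppose $C \cap T = \{e\}$ for a single element $e$. The key observation is that a circuit and a cocircuit cannot meet in exactly one element because the cocircuit $T$, being a circuit of the dual matroid, has the property that $E - T$ is a hyperplane-complement type set; more concretely, $T$ is a minimal set meeting every basis. I would first recall the standard characterization: $T$ is a cocircuit if and only if $E - T$ is a flat of corank one, equivalently $r_M(E - T) = r_M(E) - 1$ and $E-T$ is closed. Then I would take a basis $B$ of the restriction $M \setminus (T - e) = M|(E - T + e)$; since $r_M(E - T) = r_M(E) - 1$ and adding $e$ back can increase the rank by at most one, either $r_M(E - T + e) = r_M(E) - 1$ or $r_M(E-T+e) = r_M(E)$.

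Here is the cleaner route I would actually take. Assume $C \cap T = \{e\}$. Since $C$ is a circuit, $C - e$ is independent, and $e \in \operatorname{cl}(C - e)$. Now $C - e \subseteq E - T$ because $C \cap T = \{e\}$. Since $E - T$ is a flat (the complement of a cocircuit), and $C - e \subseteq E - T$, we get $\operatorname{cl}(C - e) \subseteq \operatorname{cl}(E - T) = E - T$. But $e \in \operatorname{cl}(C - e)$, so $e \in E - T$, contradicting $e \in T$. This is short and self-contained modulo the fact that the complement of a cocircuit is a flat, which is a standard matroid fact (a cocircuit is a minimal set whose removal drops the rank, so its complement is a maximal set of rank $r_M(E)-1$, hence closed).

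The main obstacle — really the only delicate point — is justifying that $E - T$ is closed (a flat) directly from the definition of cocircuit used in the paper, namely ``inclusionwise minimal set intersecting every basis.'' I would handle this as follows: $T$ meets every basis means $E - T$ contains no basis, i.e. $r_M(E - T) \le r_M(E) - 1$; minimality of $T$ means for every $f \in T$, the set $E - T + f$ does contain a basis, so $r_M(E - T + f) = r_M(E)$. In particular $f \notin \operatorname{cl}(E - T)$ for every $f \in T$, which says exactly $\operatorname{cl}(E - T) \subseteq E - T$, i.e. $E - T$ is a flat (and has rank exactly $r_M(E) - 1$). With this in hand the contradiction argument above goes through verbatim. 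An alternative, if one prefers to avoid flats, is a direct exchange/counting argument on $B \triangle C$ for a suitable basis $B$, but the flat argument is the most economical.
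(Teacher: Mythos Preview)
Your proof is correct. The paper does not actually prove this lemma; it simply states it and cites \cite[Proposition~2.1.11]{oxley2011matroid} as a reference, so there is no in-paper argument to compare against. Your closure-based argument (complement of a cocircuit is a flat, hence $e \in \operatorname{cl}(C-e) \subseteq E-T$ gives a contradiction) is one of the standard proofs of this well-known fact, and your justification that $E-T$ is closed directly from the paper's ``minimal set meeting every basis'' definition is careful and correct.
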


Let $M=(E,\cI)$ be a matroid and $E',E''\subseteq E$. The \emph{restriction to $E'$} and the \emph{deletion of $E-E'$} result in the same matroid $M|E'=M\backslash(E-E')=(E',\cI')$ with independence family $\cI'=\{I\in\cI\mid I\subseteq E'\}$. The \emph{contraction to $E''$} and the \emph{contraction of $(E-E'')$} result in the same matroid $M.E''=M/(E-E'')=(E'',\cI'')$ where $\cI''=\{I\in\cI\mid I\subseteq E-E'',I\cup Z\in\cI\ \text{for any $Z\in\cI,Z\subseteq E''$}\}$. A matroid $N$ that can be obtained from $M$ by a sequence of restrictions and contractions is called a \emph{minor} of $M$. The \emph{union} or \emph{sum} of two matroids $M_1=(E,\cI_1)$ and $M_2=(E,\cI_2)$ over the same ground set is the matroid $M_\Sigma=(E,\cI_\Sigma)$ where $\cI_\Sigma=\{I\subseteq E\mid I=I_1\cup I_2\ \text{for some $I_1\in\cI_1$, $I_2\in\cI_2$}\}$. We use $M_1+M_2$ for denoting the sum of $M_1$ and $M_2$. Edmonds and Fulkerson~\cite{edmonds1965transversals} showed that the rank function of the sum of two matroids is $r_\Sigma(Z)=\min\{\sum_{i=1}^2 r_i(X)+|Z-X|\mid X\subseteq Z\}$. In particular, $E$ is independent in the sum of $M_1$ and $M_2$ if and only if $r_{M_1}(X)+r_{M_2}(X)\geq |X|$ for every $X\subseteq E$.

A matroid is \emph{representable over some field $\bF$} if if there exists a family of vectors from a vector space over $\bF$ whose linear independence relation is the same as the independence relation of the matroid. The matroid is \emph{binary} if it is representable over $GF(2)$, and is \emph{regular} if it can be represented over any field. The following lemma gives a characterization of binary matroids in terms of circuits, see e.g.~\cite[Theorem~9.1.2]{oxley2011matroid}.

\begin{lem} \label{lem:bin}
A matroid is binary if and only if $C_1 \triangle C_2$ is a cycle for any cycles $C_1,C_2$.
\end{lem}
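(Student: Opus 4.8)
The plan is to prove both directions of the equivalence by linear algebra over $GF(2)$. For the forward direction, suppose $M$ is binary and fix a matrix $A$ over $GF(2)$ with columns indexed by $E$ that represents $M$; write $\ker A$ for its null space over $GF(2)$. The key claim is that the cycles of $M$ are exactly the sets $\operatorname{supp}(x)$ with $x\in\ker A$. One inclusion is easy: each circuit $C$ of $M$ is the support of the vector $\mathbb{1}_C\in\ker A$ (over $GF(2)$, minimality of $C$ forces all coefficients of a dependence supported on $C$ to be $1$), so a disjoint union of circuits is the support of the sum of the corresponding null-space vectors. For the other inclusion I would induct on $|\operatorname{supp}(x)|$ for nonzero $x\in\ker A$: the set $\operatorname{supp}(x)$ is dependent, hence contains a circuit $C$, and $x+\mathbb{1}_C\in\ker A$ has support $\operatorname{supp}(x)-C$, which is strictly smaller and disjoint from $C$; by induction it partitions into circuits, and together with $C$ these partition $\operatorname{supp}(x)$. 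With this identification, for cycles $C_1=\operatorname{supp}(x_1)$ and $C_2=\operatorname{supp}(x_2)$ we get $C_1\triangle C_2=\operatorname{supp}(x_1+x_2)$ with $x_1+x_2\in\ker A$, so $C_1\triangle C_2$ is a cycle.

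For the converse, assume $C_1\triangle C_2$ is a cycle whenever $C_1,C_2$ are cycles. Since the empty set is a cycle, $Y\triangle Y=\emptyset$, and every element is its own inverse in $(2^E,\triangle)$, the family $\mathcal{Y}$ of cycles of $M$ is a subgroup of $(2^E,\triangle)$, that is, a linear subspace $Z$ of the $GF(2)$-vector space $GF(2)^E$. Next I would check that the circuits of $M$ are precisely the inclusionwise minimal nonempty members of $\mathcal{Y}$: a circuit is a nonempty cycle, and is minimal as such because any proper nonempty cycle inside it would contain a circuit, contradicting minimality of the original circuit as a dependent set; conversely a minimal nonempty cycle must equal the circuit it necessarily contains. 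Hence the circuits of $M$ are exactly the minimal supports of nonzero vectors of $Z$. Finally, choosing any matrix $A$ whose rows span the orthogonal complement $Z^\perp$, its null space equals $(Z^\perp)^\perp=Z$, so the binary matroid represented by $A$ has the same circuits as $M$ and therefore equals $M$; in particular $M$ is binary.

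The step requiring the most care is the converse, and within it the identification of the combinatorially defined matroid $M$ with the matroid associated to the linear code $Z$: this rests on carefully using the definition of a cycle as a set partitioning into circuits in order to see that circuits are exactly the minimal nonempty cycles, and on the identity $(Z^\perp)^\perp=Z$ for orthogonal complements over $GF(2)$. The forward direction is essentially routine once the short induction identifying cycles with the cycle space is carried out. An alternative route, closer to standard textbook treatments, would instead pass through the criterion that $|C\cap C^*|$ is even for every circuit $C$ and cocircuit $C^*$ — of which Lemma~\ref{lem:int} is the weak form — but the linear-algebraic argument above seems more direct and self-contained.
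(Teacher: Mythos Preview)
Your proof is correct. Note, however, that the paper does not actually prove this lemma: it is stated as a known characterization of binary matroids with a reference to \cite[Theorem~9.1.2]{oxley2011matroid}, so there is no ``paper's own proof'' to compare against. Your argument supplies a clean, self-contained proof: the forward direction identifies cycles with supports of null-space vectors of a $GF(2)$-representation, and the converse builds a representing matrix from the orthogonal complement of the cycle space, using that circuits are exactly the minimal nonempty cycles. Both directions are carried out carefully and without gaps. The textbook proof you allude to at the end, which passes through the parity criterion $|C\cap C^*|\equiv 0\pmod 2$ or through excluded minors, is indeed a common alternative, but your linear-algebraic route is equally valid and arguably more direct for this particular equivalence.
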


We will further rely on the following observation.

\begin{lem} \label{lem:0or2}
Let $T=\{t_1, t_2, t_3\}$ be a triangle of a binary matroid $M=(E,\cB(M))$ and let $F \subseteq E - T$. Then, $F+t_i\in\cB(M)$ for either none or exactly two of the indices $i\in\{1,2,3\}$.
\end{lem}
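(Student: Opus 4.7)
The plan is to rule out the possibilities that exactly one or exactly three of $F+t_1, F+t_2, F+t_3$ are bases. Both impossibilities will follow from Lemma~\ref{lem:bin} applied to the triangle $T$ and a suitably chosen cycle; since $|F+t_i|$ does not depend on $i$, basishood here coincides with independence.

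First I would handle the case that all three $F+t_i$ are bases. Then $|F|=r_M(M)-1$, so $F+t_1+t_2$ has size $r_M(M)+1$ and contains a circuit $C$. Since $F+t_1$ and $F+t_2$ are both independent, $C$ must contain both $t_1$ and $t_2$. By Lemma~\ref{lem:bin}, $T\triangle C$ is a cycle; the elements $t_1,t_2\in T\cap C$ cancel, while $t_3\in T\setminus C$ (using $F\subseteq E-T$, which forces $t_3\notin F+t_1+t_2\supseteq C$). Thus $T\triangle C$ is a nonempty cycle contained in $F+t_3$, contradicting the independence of $F+t_3$.

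Next I would handle the case that exactly one of the three sets, say $F+t_1$, is a basis, while $F+t_2$ and $F+t_3$ are dependent. Then $F$ is independent of size $r_M(M)-1$, so $F+t_2$ and $F+t_3$ each contain a (unique) circuit $C_2\ni t_2$ and $C_3\ni t_3$, respectively. Since $C_2\subseteq F+t_2$ and $C_3\subseteq F+t_3$ with $t_2\ne t_3$, we have $t_3\notin C_2$ and $t_2\notin C_3$. Lemma~\ref{lem:bin} then gives that $D\coloneqq C_2\triangle C_3$ is a cycle satisfying $t_2,t_3\in D$ and $D\subseteq F+t_2+t_3$. Applying Lemma~\ref{lem:bin} once more, $T\triangle D$ is a cycle; the elements $t_2,t_3\in T\cap D$ cancel, while $t_1\in T\setminus D$ (again by $F\subseteq E-T$). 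Hence $T\triangle D$ is a nonempty cycle contained in $F+t_1$, contradicting the independence of $F+t_1$.

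The only delicate point in the argument is the bookkeeping of which of $t_1,t_2,t_3$ lies in each intermediate cycle, so that after taking the symmetric difference with $T$ one really does recover a nonempty cycle supported in $F$ plus the ``wrong'' element; once that is tracked, Lemma~\ref{lem:bin} does all the work and no serious obstacle remains.
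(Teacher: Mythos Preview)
Your proof is correct and follows essentially the same approach as the paper's: both arguments rule out the ``three bases'' and ``one basis'' cases by producing, via Lemma~\ref{lem:bin}, a nonempty cycle inside the putative basis using symmetric differences of the triangle $T$ with circuits supported in $F$ plus one or two of the $t_i$. The only cosmetic differences are the indexing (the paper singles out $t_3$ rather than $t_1$) and that the paper phrases the second case as ``at most one basis'' (so it briefly disposes of the sub-case where $F$ itself is dependent), whereas your ``exactly one basis'' framing makes $F$ automatically independent.
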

\begin{proof}
Assume first that at least two of the three sets form bases of $M$. We may assume that $F+t_1$ and $F+t_2$ are bases. Then, there exists a circuit $C \subseteq F+t_1 + t_2$ and necessarily $t_1, t_2\in C$. The set $C \triangle T$ is a cycle such that $t_3 \in C \triangle T \subseteq F+t_3$, hence $F+t_3$ is not a basis of $M$. This shows that at most two of the sets $F+t_1$, $F+t_2$ and $F+t_3$ are bases.

Suppose now that at most one of the three sets forms a basis. We may assume that $F+t_1$ and $F+t_2$ are not bases. If $F$ is not independent, then $F+t_3$ is clearly not a basis. Otherwise, let $C_1$ and $C_2$ be circuits such that $t_1 \in C_1 \subseteq F+t_1$ and $t_2 \in C_2 \subseteq F+t_2$. Then, $C_1 \triangle C_2 \triangle T$ is a cycle such $t_3 \in C_1 \triangle C_2 \triangle T \subseteq F+t_3$, hence $F+t_3$ is not a basis. This concludes the proof of the lemma.
\end{proof}

The matroid $R_{10}$ is a binary matroid that can be represented as the ten vectors in the five-dimensional vector space over $GF(2)$ that have exactly three nonzero entries, see Figure~\ref{fig:r10matrix}. The \emph{Fano matroid} $F_7$ is obtained from the Fano plane by calling a set independent if it contains at most two points or it has three points which are not lines of the plane, see Figure~\ref{fig:f7}. In other words, $F_7$ is the matroid with ground set $E=\{a,b,c,d,e,f,g\}$ whose bases are all subsets size of 3 except $\{a,b,d\}$, $\{b,c,e\}$, $\{a,c,f\}$, $\{a,e,g\}$, $\{c,d,g\}$, $\{b,f,g\}$ and $\{d,e,f\}$. 

\begin{figure}[t!]
\centering
\begin{subfigure}[t]{0.32\textwidth}
    \centering
    \includegraphics[width=0.85\textwidth]{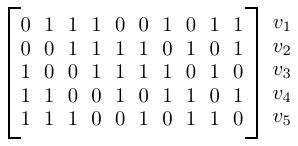}
    \caption{Representation of the binary matroid $R_{10}$ over $GF(2)$.}
    \label{fig:r10matrix}
\end{subfigure}\hfill
\begin{subfigure}[t]{0.32\textwidth}
    \centering
    \includegraphics[width=0.55\textwidth]{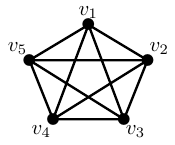}
    \caption{Representation of $R_{10}$ as an even-cycle matroid.}
    \label{fig:r10graph}
\end{subfigure}\hfill
\begin{subfigure}[t]{0.32\textwidth}
    \centering
    \includegraphics[width=0.55\textwidth]{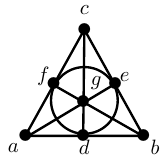}
    \caption{The bases of $F_7$ are the non-line 3-element sets of the Fano plane.}
    \label{fig:f7}
\end{subfigure}
\caption{Representations of $R_{10}$ and $F_7$.}
\label{fig:r10f7}
\end{figure}

\paragraph{Decomposition of Regular Matroids}

Let $M_1$ and $M_2$ be binary matroids on ground sets $E_1$ and $E_2$, respectively, such that $|E_1|, |E_2| < |E_1 \triangle E_2|$. Then, we denote by $M_1 \triangle M_2$ the binary matroid on ground set $E=E_1 \triangle E_2$ with cycles being the sets of the form $C_1 \triangle C_2$ where $C_i$ is a cycle of $M_i$ for $i=1,2$.

When $E_1 \cap E_2 = \emptyset$, then $M_1 \oplus_1 M_2 \coloneqq M_1 \triangle M_2$ is called the \emph{1-sum} or \emph{direct sum of $M_1$ and $M_2$}. Its family of bases is
\begin{align*}
    \cB(M_1 \oplus_1 M_2) = \{B_1 \cup B_2 \mid B_1 \in \cB(M_1), B_2 \in \cB(M_2)\} 
\end{align*}

When $|E_1 \cap E_2| = 1$, say $E_1\cap E_2 = \{t\}$, such that $t$ is nor a loop nor a coloop of $M_1$ or $M_2$, then $M_1 \oplus_2 M_2 \coloneqq M_1 \triangle M_2$ is called the \emph{2-sum of $M_1$ and $M_2$ along $t$}.
Its family of bases is
\begin{align*}
\cB(M_1 \oplus_2 M_2) {}={} & \{B'_1 \cup B_2 \mid B'_1 \in \cB(M_1/t), B_2 \in \cB(M_2\backslash t)\} \\ 
{}\cup{} & \{B_1 \cup B'_2 \mid B_1 \in \cB(M_1 \backslash t), B'_2 \in \cB(M_2 / t) \}.
\end{align*}

When $|E_1 \cap E_2| = 3$ and $E_1 \cap E_2 = T$ is a coindependent triangle of both $M_1$ and $M_2$, then $M_1 \oplus_3 M_2 \coloneqq M_1 \triangle M_2$ is called the \emph{3-sum of $M_1$ and $M_2$ along $T$}. The matroid $M_1 \oplus_3 M_2$ has rank $r(M_1)+r(M_2)-2$, and its family of bases is (see~\cite[Section 2.1]{aprile2022regular} together with Lemma~\ref{lem:0or2})
\begin{align*}
\cB(M_1 \oplus_3 M_2) {}={} & \{B''_1 \cup B_2 \mid B''_1 \in \cB(M_1 / T), B_2 \in \cB(M_2 \backslash T)\} \\
{}\cup{} &\{B'_1 \cup B'_2 \mid \exists i,j,k: \{i,j,k\} = \{1,2,3\}, B'_1+t_i, B'_1 + t_j \in \cB(M_1), B'_2 + t_i, B'_2 + t_k \in \cB(M_2)\} \\
{}\cup{} & \{B_1 \cup B''_2 \mid B_1 \in \cB(M_1 \backslash T), B''_2 \in \cB(M_2/T) \}.
\end{align*}

Seymour's fundamental decomposition theorem~\cite{seymour1980decomposition} gives a constructive characterization of regular matroids.

\begin{thm}[Seymour's decomposition theorem]\label{thm:seymour}
A matroid is regular if and only if it is obtained by means of 1-, 2- and 3-sums, starting from graphic and cographic matroids and copies of a certain 10-elements matroid $R_{10}$. 
\end{thm}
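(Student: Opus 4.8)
The statement is a two-sided characterization, and the plan is to prove each direction separately, with essentially all of the difficulty concentrated in the forward (decomposition) implication. \textbf{The ``if'' direction.} First one checks that graphic matroids, cographic matroids and $R_{10}$ are regular: a vertex-edge incidence matrix over $\bR$ is totally unimodular, so $M(G)$ and $M^{*}(G)$ are regular, and the natural $GF(2)$-representation of $R_{10}$ (columns of weight three in a $5$-dimensional space) can be signed to become totally unimodular. It then suffices to show that the class of regular matroids is closed under $1$-, $2$- and $3$-sums. The $1$-sum case is immediate. For the others, invoke Tutte's excluded-minor characterization: a binary matroid is regular if and only if it has no $F_7$ or $F_7^{*}$ minor. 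If $M_1\oplus_k M_2$ had such a minor $N$, then, since every minor of a $k$-sum is a minor of some $k$-sum $N_1\oplus_k N_2$ with $N_i$ a minor of $M_i$, and since $F_7$ and $F_7^{*}$ are $3$-connected on only $7$ elements and hence cannot be a proper $2$- or $3$-sum, the $k$-sum carrying $N$ must degenerate, making $N$ a minor of $M_1$ or of $M_2$ --- contradicting that both are regular.

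\textbf{Reducing to $3$-connected matroids.} For the ``only if'' direction, induct on $|E(M)|$. If $M$ is disconnected, it is the $1$-sum of its components, which are regular minors and strictly smaller. If $M$ is connected but not $3$-connected, a $2$-separation of a connected binary matroid displays $M$ as a $2$-sum $M_1\oplus_2 M_2$ with $M_1,M_2$ proper (hence regular, hence inductively decomposable) minors of $M$. So we may assume $M$ is $3$-connected.

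\textbf{The $3$-connected case.} If $M$ is graphic, cographic, or isomorphic to $R_{10}$, we are done; assume not. The aim is a $3$-sum decomposition into strictly smaller regular matroids, and it rests on two structural facts. First, $R_{10}$ is a \emph{splitter} for the regular matroids: $R_{10}$ has no $3$-connected regular single-element extension and no $3$-connected regular single-element coextension, so by Seymour's splitter theorem a $3$-connected regular matroid with an $R_{10}$ minor must equal $R_{10}$; since $M\neq R_{10}$, it has no $R_{10}$ minor. Second, since $M$ is not graphic it has a minor isomorphic to $M^{*}(K_5)$ or $M^{*}(K_{3,3})$, and since it is not cographic it has a minor isomorphic to $M(K_5)$ or $M(K_{3,3})$; a long splitter-theorem analysis of how a ``cographic, non-graphic'' minor and a ``graphic, non-cographic'' minor can coexist inside a $3$-connected regular matroid with no $R_{10}$ minor forces $M$ to have a minor isomorphic to $R_{12}$, a particular $12$-element regular matroid that is neither graphic nor cographic. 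Now $R_{12}$ possesses an exact $3$-separation $(A,B)$ with $|A|=|B|=6$ whose two sides are highly connected, and the machinery lifting such a $3$-separation of a minor to the ambient $3$-connected matroid extends it to an exact $3$-separation $(E_1,E_2)$ of $M$ with $|E_1|,|E_2|\geq 6$. Finally, an exact $3$-separation of a $3$-connected binary matroid with both sides of size at least $4$ can be displayed as a $3$-sum: adjoin to each side a triangle $T$ spanning the guts of the separation and verify that $T$ is a coindependent triangle of both parts, that $M_1$ on $E_1\cup T$ and $M_2$ on $E_2\cup T$ are regular minors of $M$ (which can be taken $3$-connected), and that $M=M_1\oplus_3 M_2$. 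As $|E_1\cup T|,|E_2\cup T|<|E(M)|$, induction finishes the proof.

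\textbf{The main obstacle.} The genuinely hard step is the second structural fact above --- establishing that an $R_{12}$ minor is unavoidable, together with the lifting of its $3$-separation to $M$. This is where essentially all of Seymour's technical effort lies: a careful combination of the splitter theorem, delicate control of matroid connectivity under single-element extensions, and a detailed understanding of how $3$-separations propagate to minors. By comparison, the ``if'' direction, the $1$-sum and $2$-sum reductions, and the passage from an exact $3$-separation to a $3$-sum are routine.
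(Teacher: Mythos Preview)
The paper does not prove this theorem at all: it is stated in the Preliminaries as Seymour's fundamental result and attributed to \cite{seymour1980decomposition}, with no argument given. It functions purely as a black box in the paper, so there is no ``paper's own proof'' to compare your proposal against.

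That said, your outline is a faithful high-level sketch of Seymour's original proof strategy (closure of regular matroids under $1$-, $2$-, $3$-sums for the easy direction; reduction to the $3$-connected case; the splitter role of $R_{10}$; the unavoidability of an $R_{12}$ minor and lifting its $3$-separation to obtain a $3$-sum). You correctly identify the $R_{12}$ step as the crux. As a proof \emph{for this paper}, however, it is out of place: the authors are not claiming a new proof of Seymour's theorem, and the appropriate ``proof'' here is simply a citation.
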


A binary matroid is said to be \emph{connected} or \emph{2-connected} if it is not a 1-sum, and \emph{3-connected} if it is not a 1-sum or a 2-sum of two matroids. While studying the extension complexity of the independence polytope of regular matroids, Aprile and Fiorini~\cite{aprile2022regular} recently gave a refinement of Seymour's result in the 3-connected case.

\begin{thm}[Aprile and Fiorini]\label{thm:af1}
Let $M$ be a 3-connected regular matroid distinct from $R_{10}$. There exists a tree $\cT$ such that each node $v\in V(\cT)$ is labeled with a graphic or cographic matroid $M_v$, each edge $uv\in E(\cT)$ has a corresponding 3-sum $M_u\oplus_3 M_v$, and $M$ is the matroid obtained by performing all the 3-sum  operations corresponding to the edges of $\cT$ in arbitrary order. Moreover, if $v\in V(\cT)$ is such that $M_v$ is the cographic matroid of a nonplanar graph $G_v$, then no nontrivial cut of $G_v$ is involved in any of the 3-sums.
\end{thm}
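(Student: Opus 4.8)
The plan is to bootstrap from Seymour's decomposition theorem (Theorem~\ref{thm:seymour}) and then iteratively \emph{clean up} the cographic pieces. First, I would apply Theorem~\ref{thm:seymour} to obtain a tree decomposition of $M$ into $1$-, $2$-, and $3$-sums of graphic matroids, cographic matroids, and copies of $R_{10}$. Since $M$ is $3$-connected it has neither a $1$- nor a $2$-separation, so it is not itself a $1$-sum or $2$-sum; a standard strengthening of Theorem~\ref{thm:seymour} in the $3$-connected case then lets us take a tree $\cT_0$ consisting entirely of $3$-sums with every node labeled by a graphic or cographic matroid, using $M\neq R_{10}$ to exclude $R_{10}$ blocks. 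It remains to modify $\cT_0$ so that nonplanar cographic pieces use only trivial cuts.

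Second, I would eliminate nontrivial cuts from cographic pieces one at a time. The triangles of a cographic matroid $M^*(G_v)$ are exactly the size-three bonds (inclusionwise minimal edge cuts) of $G_v$, and such a bond is a trivial cut precisely when it equals $\delta(w)$ for a vertex $w$ of degree three. Suppose a node $v$ carries $M_v=M^*(G_v)$ and some incident $3$-sum is along a triangle that, viewed as a bond $C=\{e_1,e_2,e_3\}$ of $G_v$, is nontrivial, splitting $V(G_v)$ into sides $A,B$ with $|A|,|B|\geq 2$. Let $G_v^A$ be obtained from $G_v[A]$ by adding $e_1,e_2,e_3$ with their $B$-endpoints identified to a new vertex $w_B$, and define $G_v^B$ symmetrically. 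Then $C=\delta_{G_v^A}(w_B)=\delta_{G_v^B}(w_A)$ is a \emph{trivial}, coindependent bond of both graphs, and the \dy-exchange description of the dual of a $3$-sum used by McGuinness~\cite{mcguinness2014base} gives $M^*(G_v)=M^*(G_v^A)\oplus_3 M^*(G_v^B)$ along $C$. I would then replace $v$ by two adjacent nodes carrying $M^*(G_v^A)$ and $M^*(G_v^B)$ joined by a $3$-sum along $C$, reattaching every other neighbour of $v$ to whichever new node holds the corresponding marker triangle, and I would relabel $M^*(G_v^A)$ or $M^*(G_v^B)$ as the graphic matroid of its planar dual whenever the underlying graph turns out to be planar.

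Third, for termination, observe that $\sum_{v:\,M_v\text{ cographic}}|E(G_v)|$ strictly decreases at each step, since nontriviality of $C$ forces each side to contain an edge outside $C$, so $|E(G_v^A)|,|E(G_v^B)|<|E(G_v)|$. When the process halts we obtain the desired tree $\cT$: the iterated $3$-sums still build $M$, the graphic nodes are untouched, and every cographic node $M_v=M^*(G_v)$ with $G_v$ nonplanar has the property that each of its marker triangles is a trivial cut of $G_v$, which is the assertion of the theorem.

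The main obstacle is the bookkeeping in the split step, specifically showing that the markers other than $C$ survive intact. Each such marker is a size-three bond $C'$ of $G_v$, and one must argue that $C'$ lies entirely within one side of $C$ and remains a coindependent size-three bond of the corresponding $G_v^A$ or $G_v^B$; this can fail when $C$ and $C'$ \emph{cross} as cuts. I would handle this by always splitting along an innermost nontrivial-cut marker, so that every other marker lies strictly inside one side, together with a short case analysis of the possible small symmetric differences $C\triangle C'$, using that the symmetric difference of two bonds is an edge-disjoint union of bonds. Checking that the planar-to-graphic relabellings are globally consistent is a secondary point and follows because a planar graph's cographic matroid is the graphic matroid of its planar dual, under which $3$-sums along triangles correspond to clique-sums of the dual graphs.
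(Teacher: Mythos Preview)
The paper does not prove Theorem~\ref{thm:af1}; it is quoted verbatim from Aprile and Fiorini~\cite{aprile2022regular} and used as a black box. There is therefore no ``paper's own proof'' to compare your attempt against, and in the logic of this paper you are not expected to supply one.

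That said, your sketch is roughly the right shape but has two soft spots. First, the sentence ``a standard strengthening of Theorem~\ref{thm:seymour} in the $3$-connected case then lets us take a tree $\cT_0$ consisting entirely of $3$-sums with every node labeled by a graphic or cographic matroid'' is doing a lot of work: you need to explain why $3$-connectedness of $M$ kills all $1$- and $2$-sums in the tree (not just at the top level), and why no node can carry $R_{10}$ (the clean reason is that $R_{10}$ has no triangles, hence cannot participate in any $3$-sum, so an $R_{10}$ block would force $M=R_{10}$). Second, the ``main obstacle'' you flag is genuine and your proposed fix is not yet a proof. Distinct $3$-bond markers of $G_v$ can share edges, so ``innermost'' has no obvious meaning, and after splitting along $C$ a marker $C'$ that crosses $C$ need not survive as a coindependent triangle in either $G_v^A$ or $G_v^B$; a short symmetric-difference argument does not by itself guarantee a laminar family of markers. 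In~\cite{aprile2022regular} this is exactly the place where real work happens, and a full proof requires either an uncrossing argument for the marker bonds or a more careful choice of which cut to split first, together with verification that the resulting pieces still satisfy the hypotheses needed for a $3$-sum (coindependence of the triangle, size bounds on the ground sets). If you want a self-contained argument you should spell this step out rather than defer it to a case analysis.
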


A tree $\cT$ satisfying the conditions of Theorem~\ref{thm:af1} is called a \emph{decomposition tree} of $M$, and the matroids corresponding to the nodes of $\cT$ are referred to as \emph{basic} matroids. It is worth mentioning that an analogous result was proved by Dinitz and Kortsarz in~\cite{dinitz2014matroid}, but their decomposition tree may involve 1- and 2-sums, and also 3-sums along nontrivial cuts.

To describe the dual of a 3-sum, we need the notion of \dy exchanges. In case of graphs, if $T$ is triangle of a graph $G$, then we perform a \dy exchange on $G$ by deleting the edges of $T$, adding a new vertex $v$ and edges new edges joining $v$ to vertices of $T$. More generally, consider a binary matroid $M$ and let $T$ be a coindependent triangle of $M$. Let $N$ be a matroid isomorphic to the graphic matroid of $K_4$ on ground set $T\cup T'$ where $T$ is a triangle of $N$, and the triad $T'$ of $N$ is disjoint from the ground set of $M$. We say that the matroid $\Delta_T(M) \coloneqq M \oplus_3 N$ is obtained from $M$ by performing a \emph{\dy exchange}. McGuinness~\cite{mcguinness2014base} gave a characterization of the dual of a 3-sum.

\begin{prop}[McGuinness]\label{prop:mcguinness}
Consider a 3-sum $M_1 \oplus_3 M_2$ along a coindependent triangle $T$ of $M_1$ and $M_2$. Then,
$(M_1 \oplus_3 M_2)^* = \Delta_T(M_1)^* \oplus_3 \Delta_T(M_2)^*$, where the \dy exchanges $\Delta_T(M_1)$ and $\Delta_T(M_2)$ are performed using the same matroid $N$ on ground set $T\cup T'$ and the 3-sum $\Delta_T(M_1)^*\oplus_3 \Delta_T(M_2)$ is performed using the common triangle $T'$ of $\Delta_T(M_1)^*$ and $\Delta_T(M_2)^*$.
\end{prop}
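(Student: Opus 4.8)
The plan is to work through the definitions, reducing everything to the combinatorial description of 3-sums in terms of cycles, and to use the fact that the \dy exchange operation is, in a precise sense, dual to the 3-sum with the graphic matroid of $K_4$. First I would fix notation: let $E_1, E_2$ be the ground sets of $M_1, M_2$ with $E_1 \cap E_2 = T = \{t_1,t_2,t_3\}$ a coindependent triangle in both, let $E = E_1 \triangle E_2$ be the ground set of $M := M_1 \oplus_3 M_2$, and let $N$ be the graphic matroid of $K_4$ on $T \cup T'$ with $T$ a triangle and $T'$ a triad. Since $\Delta_T(M_i) = M_i \oplus_3 N$, the ground set of $\Delta_T(M_i)$ is $(E_i \triangle (T\cup T')) = (E_i - T) \cup T'$; hence $\Delta_T(M_1)^*$ and $\Delta_T(M_2)^*$ share exactly $T'$, which one checks is a triangle in each dual, so the right-hand 3-sum $\Delta_T(M_1)^* \oplus_3 \Delta_T(M_2)^*$ is well-defined and has ground set $((E_1 - T)\cup T') \triangle ((E_2-T)\cup T') = (E_1 - T)\triangle(E_2 - T) = E$. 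So both sides live on the same ground set, and it remains to match their (co)cycle structure.

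The key computational step is: iterating the binary-matroid identity for cycles, $M_1 \oplus_3 M_2 = M_1 \triangle M_2$ and $\Delta_T(M_i) = M_i \triangle N$, so on the dual side I want to show $(M_1 \triangle M_2)^* = (M_1 \triangle N)^* \triangle (N \triangle M_2)^*$, i.e. that $\triangle$ on binary matroids ``commutes with dualization'' after inserting the $K_4$-gadget $N$ on the glueing triangle. Here the crucial fact about $N = M(K_4)$ is that it is \emph{identically self-dual} in the relevant sense — $N^* \cong N$ with $T$ and $T'$ swapped (the triangle-triad structure of $K_4$) — which is exactly why the same gadget $N$ appears in both \dy exchanges in the statement. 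I would make this precise by checking that the cocircuit space of $M_1 \triangle M_2$ (over $GF(2)$) decomposes: a cocircuit $D$ of $M$ restricts to $E_1$ and $E_2$, and since $T$ is coindependent in both $M_i$ — so $T$ meets no cocircuit of $M_i$ in a single element but can be a subset of none forcing a clean ``parity'' condition on $D\cap T$ — the pieces $D\cap E_i$ extend uniquely across to $T'$ via $N$. Concretely, one shows $D \cap T$ has even size in the combined structure, so exactly the patching through $N^*$'s triads records the right gluing data, and conversely any pair of cocircuits on the two augmented sides agreeing on $T'$ glues to a cocircuit of $M$.

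The main obstacle will be the bookkeeping around the coindependence hypothesis and the parity of intersections with $T$ and $T'$: one must verify that $T$ is coindependent in $M_i$ iff $T'$ is coindependent in $\Delta_T(M_i)$ (so that the dual 3-sum along $T'$ is legitimate), and that the three-case description of $\cB(M_1 \oplus_3 M_2)$ given earlier in the excerpt — the $B_1'' \cup B_2$, the ``split-across'' $B_1' \cup B_2'$, and the $B_1 \cup B_2''$ cases — dualizes term-by-term into the corresponding three-case description of $\cB(\Delta_T(M_1)^* \oplus_3 \Delta_T(M_2)^*)$. I expect to handle this by translating each basis-type of $M$ into a cobasis (complement of a basis), using $r(M_1 \oplus_3 M_2) = r(M_1)+r(M_2)-2$ and the analogous rank identity for the augmented matroids ($r(\Delta_T(M_i)) = r(M_i)+1$, whence $r(\Delta_T(M_i)^*) = |E_i - T| + 1 - (r(M_i)+1 - \text{(correction for the }K_4\text{ gluing)})$, which I would pin down carefully), and then checking that the three families match. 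Once the bijection between bases of one side and cobases of the other is established on each of the three types, together with the well-definedness checks on $T'$, the proposition follows; the cycle-space identity $M_1 \triangle M_2 = (M_1\triangle N)\triangle(N\triangle M_2)$ — using $N \triangle N = $ (trivial matroid on $T$, acting as identity for $\triangle$ along $T$) — is the clean way to see why no further terms appear.
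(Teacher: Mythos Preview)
The paper does not prove this proposition; it is quoted from McGuinness~\cite{mcguinness2014base} and used as a black box. So there is no in-paper argument to compare against.

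Your plan is broadly on target: the cocircuit-space decomposition you sketch in the second paragraph is the heart of any direct proof, and the self-dual structure of $M(K_4)$ (triangle $T$ and triad $T'$ interchange under duality) is indeed what makes the gadget work. One point to tighten: the ``clean'' cycle-space identity $M_1 \triangle M_2 = (M_1\triangle N)\triangle(N\triangle M_2)$ you close with is true but does not by itself yield the proposition, since dualization does not commute with the $\triangle$ operation in general --- that non-commutation is precisely why the $\Delta_T$ insertions are needed in the statement. What actually closes the argument is the companion fact that the \emph{cocycle} space of $M_1 \oplus_3 M_2$ also decomposes as $\{d_1\triangle d_2: d_i \text{ a cocycle of } M_i,\ d_1\cap T=d_2\cap T\}$. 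This holds because $T$ is a circuit in each $M_i$ (forcing $|d_i\cap T|$ even, so restrictions land in the even-weight subspace of $GF(2)^T$) and coindependent in each $M_i$ (so no nonzero cocycle is supported inside $T$, and the restriction map onto the even-weight subspace is surjective); an orthogonality check against the cycle space plus a dimension count then gives equality. With that cocycle decomposition in hand, your bridge through $N$ works on the cocycle side exactly as you describe it on the cycle side, using that $T$ is also coindependent in $N$. The basis-type bijection you outline as a fallback in the third paragraph would also succeed but is heavier bookkeeping than the linear-algebra route.
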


The reverse operation of a \dy exchange is called a \yd exchange. If $T$ is an independent triad of a binary matroid $M$, then we say that the matroid $\nabla_T(M) \coloneqq \Delta_T(M^*)^*$ is obtained from $M$ by performing a \yd exchange. The \dy and \yd exchanges are indeed reverse operations of each other, see \cite[Proposition~11.5.11]{oxley2011matroid}. If $u$ is a degree 3 vertex of a graph $G$ with distinct adjacent vertices $x$, $y$ and $z$, then we can perform the \yd operation on the graphic matroid $M(G)$ by deleting $u$ and adding the edges $xy$, $yz$ and $xz$. In particular, if $T$ is an independent triad of a graphic matroid $M(G)$ corresponding to the edges adjacent to a trivial cut $G$, then $\nabla_T(M(G))$ is a graphic matroid. This implies the following.

\begin{lem}\label{lem:co}
    If $T$ is a coindependent triangle of a cographic matroid $M^*(G)$ corresponding to a trivial cut of $G$, then $\Delta_T(M^*(G))$ is cographic.
\end{lem}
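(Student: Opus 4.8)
The plan is to deduce the statement from the corresponding fact for \yd exchanges of graphic matroids, which is recorded in the paragraph preceding the lemma, by dualizing everything. Write $G$ for the graph in question and let $T\subseteq E$ be the given coindependent triangle of $M^*(G)$, corresponding to a trivial cut $\delta_G(w)$ of $G$. Since circuits of $M^*(G)$ are cocircuits of $M(G)$, the set $T$ is a triad of $M(G)$; and coindependence of $T$ in $M^*(G)=M(G)^*$ means exactly that $T$ is independent in $M(G)$. Thus $T$ is an independent triad of the graphic matroid $M(G)$, so $\nabla_T(M(G))$ is well-defined.

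The second step is the duality bookkeeping. By definition $\nabla_T(N)=\Delta_T(N^*)^*$ whenever $T$ is an independent triad of $N$; applying this with $N=M(G)$ gives $\nabla_T(M(G))=\Delta_T(M(G)^*)^*=\Delta_T(M^*(G))^*$, and hence, taking duals, $\Delta_T(M^*(G))=\nabla_T(M(G))^*$. (Note that $\Delta_T(M^*(G))$ is itself well-defined precisely because $T$ is a coindependent triangle of $M^*(G)$ by hypothesis, so no appeal to Proposition~\ref{prop:mcguinness} is needed.)

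It then remains to observe that $\nabla_T(M(G))$ is graphic. Because $T=\delta_G(w)$ is coindependent in $M^*(G)$, i.e.\ independent in $M(G)$, no two of the three edges incident to $w$ are parallel, so $w$ has degree three with three pairwise distinct neighbours $x,y,z$. Hence the \yd operation at $w$ is exactly the graph operation that deletes $w$ and adds the triangle on $x,y,z$, and by the statement quoted just before the lemma it produces a (loopless) graph $G'$ with $\nabla_T(M(G))=M(G')$. Combining this with the previous identity, $\Delta_T(M^*(G))=M(G')^*=M^*(G')$, which is cographic, as required.

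Since all the ingredients — the circuit/cocircuit duality, the defining relation $\nabla_T(\cdot)=\Delta_T((\cdot)^*)^*$, and the graph-theoretic description of \yd on graphic matroids — are already available in the excerpt, I do not expect a genuine obstacle. The only point requiring a moment's care is the verification that coindependence of the triangle rules out parallel edges at $w$, so that the \yd exchange really coincides with the graph operation and $G'$ is an honest loopless graph; everything else is a short chain of definitions.
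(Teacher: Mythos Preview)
Your proof is correct and follows exactly the approach the paper intends: the lemma is stated immediately after the observation that $\nabla_T(M(G))$ is graphic when $T$ is an independent triad coming from a trivial cut, and your argument simply unwinds the duality $\Delta_T(M^*(G))=\nabla_T(M(G))^*$ from the defining relation $\nabla_T(N)=\Delta_T(N^*)^*$. Your extra care in checking that coindependence of $T$ forces the three neighbours of $w$ to be distinct (so that the graph-level \yd operation really applies and $G'$ is loopless) is a detail the paper leaves implicit.
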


\begin{rem}
We note that if $T$ does not correspond to a trivial cut of $G$, then $\Delta_T(M^*(G))$ might not be a cographic matroid. As an example, if $e$ is an edge of $K_5$ and $T$ is the triangle of $K_5$ formed by the vertices not adjacent to $e$, then, $\Delta_T(M(K_5-e)) = M(K_{3,3})$, see also~\cite[Figure~11.20]{oxley2011matroid}. Since $K_5-e$ is a planar and $K_{3,3}$ is a nonplanar graph, $M(K_5-e)$ is a cographic matroid while $M(K_{3,3})$ is not.
\end{rem}

\paragraph{Algorithms and Oracles}

In matroid algorithms, it is usually assumed that the matroid is given by an \emph{oracle} and the running time is measured by the number of oracle calls and other conventional elementary steps. There are many different types of oracles that are often used, the independence, circuit and rank oracles probably being the most standard ones. For a matroid $M=(E,\cI)$ and set $X\subseteq E$ as an input, an independence oracle answers \emph{``Yes''} if $X$ is independent and \emph{``No''} otherwise, a circuit oracle answers \emph{``Yes''} if $X$ is a circuit and \emph{``No''} otherwise, and a rank oracle gives back $r_M(X)$. 

In fact, these oracles have the same computational power. An oracle $\cO_1$ is \emph{polynomially reducible} to another oracle $\cO_2$ if $\cO_1$ can be implemented by using a polynomial number of oracle calls to $\cO_2$ measured in terms of the size of the ground set. Two oracles are \emph{polynomially equivalent} if they are mutually polynomially reducible to each other. It is not difficult to show that the independence, circuit and rank oracles are polynomially equivalent, see e.g.~\cite{robinson1980computational}. 

Let $E$ denote the ground set of $M$, and $X$ and $Y$ be disjoint subsets of $E$. Then the rank function of the minor $M/X\backslash Y$ is $r_{M/X\backslash Y}(Z)=r(Z\cup X)-r(Z)$ for $Z\subseteq E-(X\cup Y)$, and the rank function of the dual $M^*$ is $r_{M^*}(Z)=|Z|-(r_M(E)-r_M(E-Z))$ for $Z\subseteq E$, see e.g.~\cite{oxley2011matroid}. That is, given an independence oracle access to the matroid $M$, independence oracles can be implemented for any minor and the dual of $M$ by the polynomial equivalence of the rank and independence oracles. Therefore, we will use these basic matroid operations in our algorithm.

For a binary matroid $M$, it can be decided if $M$ is not connected, connected but not $3$-connected, or $3$-connected using a polynomial number of oracle calls~\cite[Theorem 8.4.1]{truemper1990decomposition}. Moreover, the algorithm also provides a 1-sum decomposition in the first case, a 2-sum decomposition in the second case, and a 3-sum decomposition if it exists in the third case. When applied to a regular matroid recursively, the algorithm eventually gives a decomposition $M$ into basic matroids each of which is either graphic, cographic or isomorphic to $R_{10}$. If the matroid is $3$-connected and is not $R_{10}$, then~\cite{aprile2022regular} describes an algorithm how to modify this decomposition until it gives a decomposition tree with no bad nodes using a polynomial number of oracle calls. These observations together imply that for any $3$-connected matroid different from $R_{10}$, we can efficiently determine a decomposition tree not containing bad nodes.

\section{Reduction to 3-Connected Case Without Small Cocircuits}
\label{sec:reductions}

In this section, we focus on how the exchange distance behaves for basic matroid operations such as contraction and taking 1- or 2-sums. Furthermore, we identify structural properties of regular matroids such as the existence of a \emph{tight set} or a \emph{triad} that allow for reduction in the problem size. As mentioned in the introduction, these reduction steps will eventually make it possible to write up the matroid as the 3-sum of a regular matroid and the graphic matroid of a 4-regular graph. Algorithmic aspects of the preprocessing steps are discussed at the end of the section.

Let $M$ be the 1-, 2- or 3-sum of binary matroids $M_\circ=(E_\circ,\cB(M_\circ))$ and $M_\bullet=(E_\bullet,\cB(M_\bullet))$ along $T$ where $T$ is empty in case of 1-sums, it consists of a single element in case of 2-sums and of three elements in case of 3-sums. For any set $X\subseteq E_\circ\cup E_\bullet$, we define $X^\circ\coloneqq X\cap (E_\circ-T)$ and $X^\bullet\coloneqq X\cap (E_\bullet-T)$. In particular, for any basis $B\in\cB(M)$, we have $B^\circ= B\cap E_\circ$ and $B^\bullet= B\cap E_\bullet$. Note that for 2- and 3-sums, $B^\circ$ and $B^\bullet$ are not necessarily bases of $M_\circ$ and $M_\bullet$, respectively; see the characterization of bases in Section~\ref{sec:prelim}.

Since we will prove Theorem~\ref{thm:regwhite} and Theorem~\ref{thm:reggabow} in a stronger form for graphic matroids, we formulate some of the reductions for $F$-avoiding exchange sequences whose last step is partially fixed. This results in a series of rather technical lemmas, but this should not deter the interested reader from the later sections. We encourage first-time readers to skip these technical parts and only return to them after getting a general understanding of the structure of the proof. 

Operations similar to those discussed in Section~\ref{sec:disjoint} and Section~\ref{sec:tight} were implicitly mentioned in~\cite{white1980unique}, while an operation similar to the one discussed in Section~\ref{sec:2sum} was considered in~\cite{shibata2016toric}. However, we will deduce stronger properties of the reduction steps and also discuss the algorithmic aspects. 

\subsection{Making the Bases Disjoint}
\label{sec:disjoint}

We start with the simple observation that it suffices to consider compatible pairs consisting of disjoint bases. 

\begin{lem}\label{lem:disjoint}
Let $\cX=(X_1,X_2)$ and $\cY=(Y_1,Y_2)$ be compatible pairs of bases of a matroid $M$ and let $F\subseteq (X_1\cap Y_1)\cup (X_2\cap Y_2)$. Define $\cX'\coloneqq(X_1-X_2,X_2-X_1)$, $\cY'\coloneqq(Y_1-Y_2,Y_2-Y_1)$ and $F'\coloneqq F-(X_1\cap X_2)$. If there exists an $F'$-avoiding $\cX'$-$\cY'$ exchange sequence in $M/(X_1\cap X_2)$ of width $w$ and length $\ell$, then there exists an $F$-avoiding $\cX$-$\cY$ exchange sequence in $M$ of width $w$ and length $\ell$. Furthermore, if $h\in E-(X_1\cap X_2)$ is used in the last step of the $\cX'$-$\cY'$ exchange sequence, then it can be assumed to be used in the last step of the $\cX$-$\cY$ exchange sequence as well.

\end{lem}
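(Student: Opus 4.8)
The plan is to show that an $F'$-avoiding $\cX'$-$\cY'$ exchange sequence in the contraction $M/(X_1\cap X_2)$ lifts verbatim to an $F$-avoiding $\cX$-$\cY$ exchange sequence in $M$, simply by re-adjoining the common elements $X_1\cap X_2$ to every basis in the sequence. First I would record the basic facts: since $\cX$ and $\cY$ are compatible basis pairs, an element lies in both $X_1$ and $X_2$ iff it lies in both $Y_1$ and $Y_2$, so $X_1\cap X_2 = Y_1\cap Y_2 =: D$; moreover $X_i - D$ for $i=1,2$ (and likewise $Y_i - D$) is a basis of $M/D$, because $D$ is independent and $X_i = (X_i-D)\cup D$ is a basis of $M$. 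Thus $\cX' = (X_1-X_2, X_2-X_1) = (X_1-D, X_2-D)$ and $\cY' = (Y_1-D,Y_2-D)$ are genuinely basis pairs of $M/D$, and $F' = F - D \subseteq (X_1'\cap Y_1')\cup(X_2'\cap Y_2')$ as required for the hypothesis to make sense.

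Next I would establish the key correspondence between symmetric exchanges in $M/D$ and in $M$. The map $B' \mapsto B' \cup D$ is a bijection between $\cB(M/D)$ and $\{B \in \cB(M) : D \subseteq B\}$. I claim that if $(B_1', B_2')$ is a basis pair of $M/D$ and $(C_1', C_2')$ is obtained from it by a symmetric exchange of elements $e \in B_1' - B_2'$ and $f \in B_2' - B_1'$, then $(B_1' \cup D, B_2' \cup D)$ is a basis pair of $M$ and $(C_1' \cup D, C_2' \cup D)$ is obtained from it by the symmetric exchange of the same $e,f$. Indeed $e \notin D$ and $f \notin D$ since $e,f$ lie in the ground set of $M/D$; $B_1' - e + f$ is a basis of $M/D$ iff $(B_1' - e + f)\cup D = (B_1'\cup D) - e + f$ is a basis of $M$, and symmetrically for $B_2'$. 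Hence applying the lifting map to every term of the given $\cX'$-$\cY'$ exchange sequence yields a valid sequence of symmetric exchanges in $M$ starting at $(X_1'\cup D, X_2'\cup D) = (X_1, X_2) = \cX$ and ending at $(Y_1'\cup D, Y_2'\cup D) = (Y_1,Y_2) = \cY$, so it is an $\cX$-$\cY$ exchange sequence. Its length is unchanged, and an element $h$ appears in the lifted sequence exactly as often as in the original one except possibly for elements of $D$, which now appear in every basis; but the \emph{exchanges} use precisely the same pairs of elements, none of which lie in $D$, so the width (maximum number of occurrences of any element \emph{in the exchanges}) is unchanged, and since $F' = F - D$ is avoided and no exchange touches $D$, the lifted sequence is $F$-avoiding. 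Finally, if $h \in E - D$ is used in the last step of the $\cX'$-$\cY'$ sequence, then the same $h$ is used in the last step of the lifted sequence, giving the last clause.

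I do not expect a serious obstacle here; the only points requiring a little care are (i) verifying $X_1\cap X_2 = Y_1\cap Y_2$ from compatibility so that contracting by $X_1\cap X_2$ makes sense on both sides, and (ii) being precise about the definition of width and the notion of "element used in a step" so that adjoining $D$ to every basis does not spuriously change these quantities — the resolution being that width and $F$-avoidance are properties of the \emph{exchanges} performed, not of the bases themselves, and the exchanges are literally copied over. One should also note the edge case where $X_1 \cap X_2 = \emptyset$, in which $M/D = M$, $\cX' = \cX$, $\cY' = \cY$, $F' = F$, and the statement is a tautology; and the case $r = 0$ or $X_1 = X_2$, handled uniformly by the same argument. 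I would write the proof in essentially the two displayed steps above: first the reduction of the data and the identity $X_1\cap X_2 = Y_1\cap Y_2$, then the lemma on lifting a single symmetric exchange, and conclude by induction on the length of the sequence.
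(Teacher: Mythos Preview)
Your proposal is correct and follows essentially the same approach as the paper: verify $X_1\cap X_2 = Y_1\cap Y_2$ from compatibility, then lift the exchange sequence from $M/(X_1\cap X_2)$ to $M$ by re-adjoining the common set $D$ to every basis, noting that each symmetric exchange in $M/D$ is literally the same symmetric exchange in $M$. The paper's proof is a two-line version of exactly this argument; your additional care about width, $F$-avoidance, and the last-step element is welcome but not a different idea.
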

\begin{proof}
Recall that $(X_1,X_2)$ and $(Y_1,Y_2)$ are compatible if $X_1\cap X_2=Y_1\cap Y_2$ and $X_1\cup X_2=Y_1\cup Y_2$. This implies that $\cX'$ and $\cY'$ form compatible basis pairs of $M/(X_1\cap X_2)$. As any sequence of symmetric exchanges that transforms $\cX'$ into $\cY'$ also transforms $\cX$ into $\cY$, the lemma follows.
\end{proof}

By the lemma, it suffices to consider instances where $X_1\cap X_2=Y_1\cap Y_2=\emptyset$. Furthermore, since the elements not contained in any of the bases cannot participate in exchanges and hence can be deleted, we can assume without loss of generality that $E=X_1\cup X_2=Y_1\cup Y_2$ holds. 

\subsection{Excluding Tight Sets}
\label{sec:tight}

Given a matroid $M$ over ground set $E$, a set $Z\subseteq E$ is called \emph{tight} if $|Z|=2\cdot r_M(Z)$. A tight set $Z$ is called \emph{nontrivial} if $\emptyset\neq Z\subsetneq E$. Nontrivial tight sets are special for the following reason: every partition $E=X_1\cup X_2$ into two disjoint bases necessarily satisfies $|X_i\cap Z|=r_M(Z)$ for $i=1,2$. In other words, pairs of disjoint bases of $M$ are exactly the pairs of disjoint bases of the matroid $M|Z\oplus_1 M/Z$. This observation allows us to reduce the size of the problem along a nontrivial tight set.

\begin{lem}\label{lem:tight}
Let $\cX=(X_1,X_2)$ and $\cY=(Y_1,Y_2)$ be compatible pairs of disjoint bases of a matroid $M$, $F\subseteq (X_1\cap Y_1)\cup (X_2\cap Y_2)$, and let $\emptyset\neq Z\subsetneq X_1\cup X_2$ be a tight set. Define $F'\coloneqq F\cap Z$, $F''\coloneqq F-Z$, $\cX'\coloneqq(X_1\cap Z,X_2\cap Z)$, $\cX''\coloneqq(X_1-Z,X_2-Z)$, $\cY'\coloneqq(Y_1\cap Z,Y_2\cap Z)$ and $\cY''\coloneqq(Y_1-Z,Y_2-Z)$. If there exists an $F'$-avoiding $\cX'$-$\cY'$ exchange sequence in $M|Z$ of width $w'$ and length $\ell'$ and an $F''$-avoiding $\cX''$-$\cY''$ exchange sequence in $M/Z$ of width $w''$ and length $\ell''$, then there exists an $F$-avoiding $\cX$-$\cY$ exchange sequence in $M$ of width $\max\{w', w''\}$ and length $\ell'+\ell''$. Furthermore, if $h\in E$ is used in the last step of the $\cX''$-$\cY''$ exchange sequence, then it can be assumed to be used in the last step of the $\cX$-$\cY$ exchange sequence as well.
\end{lem}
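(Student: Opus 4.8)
The strategy is to take the two given exchange sequences — one living entirely inside $Z$ in the matroid $M|Z$, and one living entirely outside $Z$ in $M/Z$ — and simply concatenate them, first performing the $\cX''$-$\cY''$ sequence (the one on $M/Z$) and then the $\cX'$-$\cY'$ sequence. The key point, exactly as in the preamble of Section~\ref{sec:tight}, is that since $Z$ is a nontrivial tight set, every partition of $X_1\cup X_2$ into two disjoint bases of $M$ splits as a disjoint union of a partition of $Z$ into two bases of $M|Z$ and a partition of $(X_1\cup X_2)-Z$ into two bases of $M/Z$; that is, the pairs of disjoint bases of $M$ are precisely the pairs of disjoint bases of $M|Z\oplus_1 M/Z$. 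I would state this as the first step and prove it from the tightness inequality $|Z|=2r_M(Z)$ together with submodularity: for a partition $E=B_1\cup B_2$ into bases, $|B_i\cap Z|\le r_M(Z)$ holds because $B_i\cap Z$ is independent, and summing the two inequalities forces equality in both, whence $B_i\cap Z$ is a basis of $M|Z$ and $B_i-Z$ is a basis of $M/Z$ (the latter because $r_M(B_i)=r_M(Z)+r_{M/Z}(B_i-Z)$ and $r_M(B_i)=r_M(E)$, $r_{M/Z}((X_1\cup X_2)-Z)=r_M(E)-r_M(Z)$).

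The second step is to interpret a symmetric exchange performed on one side as a legal symmetric exchange in $M$. Consider any intermediate basis pair $(B_1,B_2)$ of $M$ reached during the concatenated process, and suppose the next step is a symmetric exchange in $M/Z$ swapping $e\in B_1-Z$ and $f\in B_2-Z$, so that $(B_1-e+f)\cap((X_1\cup X_2)-Z)$ and $(B_2+e-f)\cap((X_1\cup X_2)-Z)$ are bases of $M/Z$. Using the decomposition of the previous paragraph together with the fact that $B_i\cap Z$ is unchanged and remains a basis of $M|Z$, we get that $B_1-e+f$ and $B_2+e-f$ are bases of $M$; and since $e,f\notin Z$, this exchange is also valid in $M$ in the sense required by the definition of symmetric exchange. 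The symmetric statement holds for exchanges performed inside $M|Z$. Concatenating, the $\cX''$-$\cY''$ sequence moves $\cX=(X_1,X_2)$ to $(X_1',X_2')$ where $X_i'\cap Z=X_i\cap Z$ and $X_i'-Z=Y_i-Z$; then the $\cX'$-$\cY'$ sequence moves this to $(Y_1,Y_2)$. The total length is $\ell'+\ell''$, and since the two sequences touch disjoint element sets, the width of the concatenation is $\max\{w',w''\}$, not $w'+w''$.

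The third step handles the $F$-avoiding and last-step conditions. Since $F=F'\sqcup F''$ with $F'\subseteq Z$, $F''\cap Z=\emptyset$, and the $M/Z$-sequence avoids $F''$ while the $M|Z$-sequence avoids $F'$, every step of the concatenation avoids $F$. For the last-step claim: we perform the $M/Z$-sequence \emph{first} and the $M|Z$-sequence \emph{last}, so if instead we want an element $h$ used in the last step of the $\cX''$-$\cY''$ sequence to be used in the last step of the $\cX$-$\cY$ sequence, we should reverse the order — do the $M|Z$-part first and the $M/Z$-part last. (Alternatively, if $h$ is meant to be used in the last step of $\cX'$-$\cY'$, keep the order as stated.) I will just order the concatenation so that whichever sub-sequence carries the required terminal step is executed last; the same elementwise-disjointness argument shows this reordering does not affect validity, width, or the $F$-avoiding property.

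**Main obstacle.** The only genuinely delicate point is the equivalence ``pairs of disjoint bases of $M$ $=$ pairs of disjoint bases of $M|Z\oplus_1 M/Z$'', and in particular verifying that the split pieces are bases of the minors (not merely that the cardinalities match). This uses the rank identity for contraction, $r_M(A)=r_M(Z)+r_{M/Z}(A-Z)$ for $A\supseteq$ a basis of $M|Z$ — or more carefully, $r_M(A)\le r_M(A\cap Z)+r_{M/Z}(A-Z)$ in general — combined with the tightness of $Z$ to force equality everywhere. Once this structural fact is in hand, the concatenation argument is essentially bookkeeping, and the width and $F$-avoiding bounds are immediate from the disjointness of the ground sets $Z$ and $E-Z$ on which the two sequences operate.
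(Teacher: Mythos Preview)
Your proposal is correct and follows exactly the paper's approach: the paper's proof is a single sentence (``By the definition of contraction, the concatenation of the two exchange sequences results in an $\cX$-$\cY$ exchange sequence with the properties stated''), relying on the structural fact---stated in the preamble to the lemma and which you reprove carefully---that pairs of disjoint bases of $M$ coincide with those of $M|Z\oplus_1 M/Z$. Your observation that the $M/Z$-sequence must be performed last to preserve the terminal-step property for $h$ is the correct reading of the ``Furthermore'' clause, and your width and $F$-avoiding arguments via disjointness of the two element sets are exactly what the paper leaves implicit.
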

\begin{proof}
By the definition of contraction, the concatenation of the two exchange sequences results in an $\cX$-$\cY$ exchange sequence with the properties stated.
\end{proof}

If $M$ is the 1-sum of matroids $M_\circ=(E_\circ,\cB(M_\circ))$ and $M_\bullet=(E_\bullet,\cB(M_\bullet))$, then $M|E_\circ=M_\circ$ and $M/E_\circ=M_\bullet$. Furthermore, the bases of $M$ are exactly the unions of a basis of $M_\circ$ and a basis of $M_\bullet$. Hence, for any pair $(X_1,X_2)$ of disjoint bases of $M$, the set $X^\circ_1\cup X^\circ_2$ is tight since $|X^\circ_1\cup X^\circ_2|=2\cdot r_{M_\circ}(X^\circ_1\cup X^\circ_2)=2\cdot r_M(X^\circ_1\cup X^\circ_2)$. Therefore, Lemma~\ref{lem:tight} implies the following. 

\begin{cor} \label{cor:1sum}
Let $\cX=(X_1,X_2)$ and $\cY=(Y_1,Y_2)$ be compatible pairs of disjoint bases of a matroid $M=M_\circ\oplus_1 M_\bullet$. 
Define $\cX' \coloneqq (X^\circ_1, X^\circ_2)$, $\cX''\coloneqq (X^\bullet_1, X^\bullet_2)$, $\cY' \coloneqq (Y^\circ_1, Y^\circ_2)$ and $\cY'' \coloneqq (Y^\bullet_1, Y^\bullet_2)$. If there exists an $\cX'$-$\cY'$ exchange sequence in $M_\circ$ of width $w'$ and length $\ell'$ and an $\cX''$-$\cY''$ exchange sequence in $M_\bullet$ of width $w''$ and length $\ell''$, then there exists an $\cX$-$\cY$ exchange sequence in $M$ of width $\max\{w', w''\}$ and length $\ell'+\ell''$.
\end{cor}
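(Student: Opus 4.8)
The plan is to derive the corollary as an immediate specialization of Lemma~\ref{lem:tight}. By the reduction in Section~\ref{sec:disjoint} we may assume $E = X_1 \cup X_2 = Y_1 \cup Y_2$; since $M = M_\circ \oplus_1 M_\bullet$ means $E$ is the disjoint union of $E_\circ$ and $E_\bullet$, this gives $E_\circ = (X_1 \cup X_2) \cap E_\circ = X_1^\circ \cup X_2^\circ$. The first step is to observe that $Z := E_\circ$ is a tight set of $M$: the description of the bases of a $1$-sum shows that $X_1^\circ$ and $X_2^\circ$ are (disjoint) bases of $M_\circ$, so
\[
|Z| = |X_1^\circ| + |X_2^\circ| = 2\,r(M_\circ) = 2\,r_{M_\circ}(Z) = 2\,r_M(Z),
\]
where the last equality uses $M|E_\circ = M_\circ$. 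If $E_\circ \in \{\emptyset, E\}$, then one of $M_\circ, M_\bullet$ is the empty matroid and the statement is trivial, so we may assume $Z$ is a nontrivial tight set.

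The second step is to check that the data of Lemma~\ref{lem:tight} specializes correctly when applied with this $Z$ and with $F := \emptyset$. We have $M|Z = M_\circ$ and $M/Z = M_\bullet$, while $X_i \cap Z = X_i^\circ$ and $X_i - Z = X_i^\bullet$ for $i = 1, 2$, and likewise for $Y_1, Y_2$. Hence the pairs $(X_1 \cap Z, X_2 \cap Z)$, $(X_1 - Z, X_2 - Z)$, $(Y_1 \cap Z, Y_2 \cap Z)$, $(Y_1 - Z, Y_2 - Z)$ appearing in Lemma~\ref{lem:tight} are exactly $\cX'$, $\cX''$, $\cY'$, $\cY''$; moreover $F' = F'' = \emptyset$, so the $F$-avoiding hypotheses are vacuous. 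Feeding the assumed $\cX'$-$\cY'$ exchange sequence in $M_\circ$ (of width $w'$ and length $\ell'$) and $\cX''$-$\cY''$ exchange sequence in $M_\bullet$ (of width $w''$ and length $\ell''$) into Lemma~\ref{lem:tight} then produces an $\cX$-$\cY$ exchange sequence in $M$ of width $\max\{w', w''\}$ and length $\ell' + \ell''$, which is precisely the claim.

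I do not expect a genuine obstacle here: all of the content is packaged in Lemma~\ref{lem:tight}, and the only points that require a line of justification are the cardinality computation showing $E_\circ$ is tight (which relies on the explicit $1$-sum basis formula recalled in Section~\ref{sec:prelim}) and the degenerate cases in which $M_\circ$ or $M_\bullet$ is the empty matroid.
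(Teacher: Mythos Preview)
Your proposal is correct and follows essentially the same route as the paper: the paragraph preceding Corollary~\ref{cor:1sum} observes that $X_1^\circ \cup X_2^\circ$ is a tight set (via the same cardinality computation you give) and then invokes Lemma~\ref{lem:tight}. Your version is slightly more explicit in handling the degenerate cases and in spelling out that $F=\emptyset$ makes the avoidance hypotheses vacuous, but the argument is the same.
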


\subsection{Reduction to 3-Connected Matroids}
\label{sec:2sum}

When the matroid happens to be the 2-sum of matroids, the problem admits a reduction similar to the one used for tight sets. However, while merging the solutions to the subproblems was trivial for tight sets, it becomes much more involved for 2-sums. To get a better understanding of this difficulty, let $X_1$ and $X_2$ be disjoint bases of $M=M_\circ\oplus_2 M_\bullet$ where the 2-sum is along an element $t$. Assume that $X^\circ_1\in\cB(M_\circ\backslash t)$ and $X^\circ_2\in \cB(M_\circ/ t)$. This implies that $X^\circ_1+t$ and $X^\circ_2$ are bases of $M_\circ$, and that $X^\bullet_1\in\cB(M_\bullet/ t)$ and $X^\bullet_2\in\cB(M_\bullet\backslash t)$ by the definition of 2-sums. Consider a symmetric exchange  $X^\circ_1+f,X^\circ_2-f+t$ between $X^\circ_1+t$ and $X^\circ_2$ in $M_\circ$. Then, unfortunately, this step does not correspond to a feasible symmetric exchange between $X_1$ and $X_2$ in $M$, since both $X^\circ_1+f\in\cB(M_\circ/t)$ and $X^\bullet_1\in\cB(M_\bullet/ t)$.

The main result of this section is to show that the exchanges can be scheduled on the two sides of the 2-sum in a way that avoids the problem described above. The next lemma, when used in conjunction with Lemma~\ref{lem:tight}, eventually reduces the problem to the case of 3-connected matroids.

\begin{lem}\label{lem:2sum}
Let $\cX=(X_1,X_2)$ and $\cY=(Y_1,Y_2)$ be compatible pairs of disjoint bases of a matroid $M=M_\circ\oplus_2 M_\bullet$ where the 2-sum is along an element $t$. For $i=1,2$, set $X'_i\coloneqq X^\circ_i,X''_i\coloneqq X^\bullet_i+t$ if $X^\circ_i\in\cB(M_\circ\backslash t)$ and $X'_i\coloneqq X^\circ_i+t,X''_i\coloneqq X^\bullet_i$ otherwise, and $Y'_i\coloneqq Y^\circ_i, Y''_i\coloneqq Y^\bullet_i+t$ if $Y^\circ_i\in\cB(M_\circ\backslash t)$ and $Y'_i\coloneqq Y^\circ_i+t, Y''_i\coloneqq Y^\bullet_i$ otherwise. Define $\cX'\coloneqq(X'_1,X'_2)$, $\cX''\coloneqq(X''_1,X''_2)$, $\cY'\coloneqq(Y'_1,Y'_2)$ and $\cY''\coloneqq(Y''_1,Y''_2)$. If there exists an $\cX'$-$\cY'$ exchange sequence in $M_\circ$ of width $w'$ and length $\ell'$ and an $\cX''$-$\cY''$ exchange sequence in $M_\bullet$ of width $w''$ and length $\ell''$, then there exists an $\cX$-$\cY$ exchange sequence in $M$ of width at most $w'+w''$ and length at most $\ell'+\ell''-1$ if both exchange sequences involve $t$ and $\ell'+\ell''$ otherwise.
\end{lem}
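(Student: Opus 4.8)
The plan is to interleave the two given exchange sequences step by step, keeping an invariant that at each moment the pair on the $M_\circ$-side and the pair on the $M_\bullet$-side "fit together" — meaning exactly one of the two current bases on the $\circ$-side contains $t$ (equivalently, is a basis of $M_\circ$ rather than of $M_\circ\backslash t$ after removing $t$), and correspondingly the complementary $\bullet$-side base contains $t$, so that their union is a basis of $M=M_\circ\oplus_2 M_\bullet$. First I would record the key structural fact implicit in the statement: for a pair $(Z_1,Z_2)$ of disjoint bases of $M$, writing $Z_i^\circ=Z_i\cap E_\circ$, $Z_i^\bullet=Z_i\cap E_\bullet$, exactly one index $i$ has $Z_i^\circ\in\cB(M_\circ\backslash t)$ and the other has $Z_i^\circ+t\in\cB(M_\circ)$ (and symmetrically on the $\bullet$-side with $t$ going to the opposite index); this is forced by the base characterization of the 2-sum and by $t$ being neither a loop nor coloop. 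So the construction in the lemma assigning $t$ to one of $X_1',X_2'$ on the $\circ$-side and to the opposite one of $X_1'',X_2''$ on the $\bullet$-side is exactly the unique consistent assignment, and likewise for $\cY$.

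Next I would run the two sequences, doing $\circ$-side exchanges and $\bullet$-side exchanges in some order, and argue that a single exchange can always be "transplanted" to $M$ as long as it does not touch $t$: an exchange $Z_i^\circ - e + f,\ Z_j^\circ + e - f$ in $M_\circ$ with $e,f\neq t$ lifts verbatim to the exchange $Z_i - e + f,\ Z_j + e - f$ in $M$, because the bases of $M_\circ$ involved change only in elements of $E_\circ-t$ and the $\bullet$-side (including the location of $t$) is untouched, so by the base characterization the union is still a basis of $M$; symmetrically for $\bullet$-side exchanges avoiding $t$. The only exchanges that cannot be transplanted directly are those that move $t$ — i.e. swap which of the two bases on one side contains $t$. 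Here is the crucial observation: such a "$t$-move" on the $\circ$-side forces a simultaneous "$t$-move" on the $\bullet$-side (otherwise the fit-together invariant breaks), and conversely; and one can check that a $t$-move on the $\circ$-side together with a $t$-move on the $\bullet$-side can be realized by a \emph{single} symmetric exchange in $M$. Concretely, if the $\circ$-side step is $Z_i^\circ+t - e' + f'$ type where $f'=t$ (so we are adding $t$ into $Z_i^\circ$), paired with the $\bullet$-side step that removes $t$ from $Z_i^\bullet+t$ and adds some element, these two steps combine into one exchange in $M$ that swaps an element of $E_\circ-t$ with an element of $E_\bullet-t$ between $Z_i$ and $Z_j$. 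This is precisely where the savings of $-1$ in the length comes from when both sequences use $t$: the (at least one) $t$-step of the $\circ$-sequence is merged with the (at least one) $t$-step of the $\bullet$-sequence into one $M$-step, so $\ell'+\ell''$ steps become at most $\ell'+\ell''-1$.

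To make this precise I would proceed by induction on $\ell'+\ell''$, at each stage either (a) if the next $\circ$-step avoids $t$, perform its lift in $M$ and recurse; (b) symmetrically for $\bullet$; or (c) if the next pending step on both sides is a $t$-step (which, by the invariant, must be synchronized), merge the two into a single $M$-exchange and recurse on the shortened pair of sequences. One must verify that this scheduling is always possible — i.e. that we never get stuck with a $t$-step pending on one side but a non-$t$ step pending on the other in a way that blocks progress; this follows because $t$-steps on the two sides come in matched pairs (each side toggles the $t$-location the same number of times, namely the parity determined by whether $\cX$ and $\cY$ have $t$ on the same index), and any non-$t$ steps can be freely commuted past to reach the next $t$-step. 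The width bound $w'+w''$ is immediate since each element of $E_\circ-t$ is used only in lifted $\circ$-steps (at most $w'$ times) and each element of $E_\bullet-t$ only in lifted $\bullet$-steps (at most $w''$ times); a merged step uses one element from each side, consistent with these counts. The element $t$ itself never appears in the final $M$-sequence, as it is not even in the ground set $E$ of $M$.

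\textbf{Main obstacle.} The delicate point I expect to be the real work is case (c): checking that a synchronized pair of $t$-moves, one on each side, genuinely assembles into a \emph{single} valid symmetric exchange of $M$ — one has to track what the removed/added elements are on each side and confirm, using the base description of $M_\circ\oplus_2 M_\bullet$, that the resulting pair of sets is a pair of bases of $M$ differing by a single symmetric exchange, rather than two. A secondary obstacle is the bookkeeping needed to show the interleaving never deadlocks, i.e. that the $t$-steps of the two subsequences can always be brought into alignment by commuting the $t$-free steps; this is where one uses that $t$-free steps on the $\circ$-side and on the $\bullet$-side act on disjoint element sets and on independent "coordinates" of the basis pair, so they commute freely with each other and with the fit-together structure.
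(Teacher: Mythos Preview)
Your overall architecture---lift $t$-free steps verbatim, merge a $t$-step on one side with a $t$-step on the other into a single $M$-exchange---matches the paper's second case. But there is a genuine gap in the scheduling argument. You write that ``$t$-steps on the two sides come in matched pairs (each side toggles the $t$-location the same number of times, namely the parity determined by whether $\cX$ and $\cY$ have $t$ on the same index)''. The parity claim is correct, but the \emph{counts} need not agree: the $\cX'$-$\cY'$ sequence in $M_\circ$ may use $t$, say, $m'$ times while the $\cX''$-$\cY''$ sequence in $M_\bullet$ uses $t$ $m''$ times with $m'\neq m''$. After you have merged $\min(m',m'')$ pairs, one side still has pending $t$-moves with no partner on the other side, and your plan says nothing about how to realize those in $M$. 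The paper handles exactly this: once the side with fewer $t$-moves (say $M_\circ$) is exhausted, it fixes a single element $e\in Y_1^\circ$ with $Y_1^\circ-e+t,\,Y_2^\circ+e\in\cB(M_\circ)$ and uses $e$ as a surrogate for $t$ in all remaining $\bullet$-side $t$-moves---each such move becomes an exchange of $e$ with the $\bullet$-element. This surrogate mechanism is a genuine extra idea you are missing, and it is also why the width bound is $w'+w''$ rather than $\max(w',w'')$: the surrogate $e$ can be used up to $w''$ additional times on top of its at most $w'$ uses from the $\circ$-sequence.

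A smaller issue: your ``key structural fact'' that exactly one of $Z_1^\circ,Z_2^\circ$ lies in $\cB(M_\circ\backslash t)$ is not automatic---it can happen that both do, in which case $E_\circ-t$ is a nontrivial tight set. The paper disposes of this case at the outset via Lemma~\ref{lem:tight}; your proposal silently assumes it away.
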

\begin{proof}
Using the description of $\cB(M_\circ \oplus_2 M_\bullet)$, we may assume that $X^\circ_1 \in \cB(M_\circ\backslash t)$ and $X^\bullet_1 \in \cB(M_\bullet / t)$. 
If $X^\circ_2 \in \cB(M_\circ \backslash t)$, then $X^\circ_1 \cup X^\circ_2$ is a tight set in $M$ and the statement follows from Lemma~\ref{lem:tight}. Otherwise, $X^\circ_2 \in \cB(M_\circ / t)$ and thus $X^\bullet_2 \in \cB(M_\bullet \backslash t)$. 
We prove the lemma in two steps.

First, consider the case when the $\cX'$-$\cY'$ exchange sequence in $M_\circ$ does not involve the element $t$.
Note that in this case $\cX'=(X^\circ_1, X^\circ_2+t)$, $\cX''=(X^\bullet_1+t, X^\bullet_2)$, $\cY'=(Y^\circ_1, Y^\circ_2+t)$ and $\cY''=(Y^\bullet_1+t, Y^\bullet_2)$.
We construct an $\cX$-$\cY$ exchange sequence as follows. We start with the steps of the $\cX'$-$\cY'$ exchange sequence, which transform $\cX=(X_1,X_2)$ into the basis pair $(Y^\circ_1 \cup X^\bullet_1, Y^\circ_2 \cup X^\bullet_2)$.
By the symmetric exchange axiom, there exists $e\in Y^\circ_1-(Y^\circ_2+t)$ such that $Y^\circ_1-e+t,Y^\circ_2+e\in\cB(M_\circ)$.
From this point, we perform the steps of the $\cX''$-$\cY''$ exchange sequence, but whenever a symmetric exchange uses $t$ and some other element $f$, then exchange $e$ and $f$ instead.
Formally, if a symmetric exchange transforms $(Z^\bullet_1+t, Z^\bullet_2)$ into $(Z^\bullet_1+f, Z^\bullet_2-f+t)$, then this is replaced by the symmetric exchange that transforms $(Y^\circ_1 \cup Z^\bullet_1, Y^\circ_2 \cup Z^\bullet_2)$ into $((Y^\circ_1-e)\cup (Z^\bullet_1+f), ((Y^\circ_2+e)\cup (Z^\bullet_2-f))$ in $M$.
Similarly, if a symmetric exchange transforms $(Z^\bullet_1, Z^\bullet_2+t)$ into $(Z^\bullet_1-f+t, Z^\bullet_2+f)$, then this step is replaced by the symmetric exchange that transforms $((Y^\circ_1-e) \cup Z^\bullet_1, (Y^\circ_2+e) \cup Z^\bullet_2)$ into $(Y^\circ_1\cup (Z^\bullet_1-f), Y^\circ_2 \cup (Z^\bullet_2+f))$.
In both cases, the pair obtained consists of disjoint bases of $M$ due to the choice of $e$. It is not difficult to check that at the end of the procedure, we arrive at the basis pair $(Y_1,Y_2)$. The $\cX$-$\cY$ exchange sequence thus obtained has width at most $w'+w''$ and length $\ell'+\ell''$.

By symmetry, it remains to consider the case when both the $\cX'$-$\cY'$ exchange sequence in $M_\circ$ and the ${\cX''\text{-}\cY''}$ exchange sequence in $M_\bullet$ use the element $t$ at least once. Let $m'$ and $m''$ denote the number of occurrences of $t$ in these sequences; we may assume that $m'<m''$. We construct an $\cX$-$\cY$ exchange sequence as follows. We perform the steps of both the $\cX'$-$\cY'$ and $\cX''$-$\cY''$ exchange sequences, but we align the exchanges involving $t$ on both sides, see Figure~\ref{fig:2sum}. Formally, we always perform the steps of the $\cX'$-$\cY'$ exchange sequence until we reach the next step that involves $t$, say, transforms a basis pair $(Z^\circ_1, Z^\circ_2+t)$ into $(Z^\circ_1-e+t, Z^\circ_2+e)$.  
From this point, we perform the steps of the $\cX''$-$\cY''$ exchange sequence until we reach the next step that involves $t$, say, transforms $(Z^\bullet_1+t, Z^\bullet_2)$ into $(Z^\bullet_1+f, Z^\bullet_2-f+t)$. 
Then these two steps are replaced by the symmetric exchange that transforms 
$(Z^\circ_1 \cup Z^\bullet_1, Z^\circ_2 \cup Z^\bullet_2)$ into $((Z^\circ_1-e) \cup (Z^\bullet_1+f), (Z^\circ_2+e)\cup (Z^\bullet_2-f))$ in $M$.
Once there are no more steps using $t$ on the side of $M_\circ$, the exchange sequence can be finished as discussed in the previous case. The $\cX$-$\cY$ exchange sequence thus obtained has width at most $w'+w''$ and length $\ell'+\ell''-m'\leq \ell'+\ell''-1$.
\end{proof}

\begin{figure}[t!]
\centering
\begin{subfigure}[t]{0.27\textwidth}
    \centering
    \includegraphics[width=0.7\textwidth]{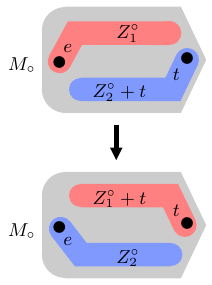}
    \caption{A step of the $\cX'$-$\cY'$ exchange sequence transforming $(Z^\circ_1\!,\!Z^\circ_2\!+t)$ into $(Z^\circ_1\!-\!e\!+\!t,\! Z^\circ_2\!+\!e)$.}
    \label{fig:2sum1}
\end{subfigure}\hfill
\begin{subfigure}[t]{0.27\textwidth}
    \centering
    \includegraphics[width=0.7\textwidth]{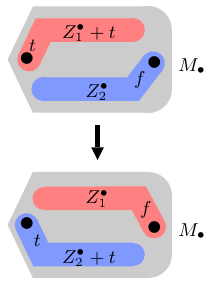}
    \caption{A step of the $\cX''$-$\cY''$ exchange sequence transforming $(Z^\bullet_1\!+t,\!Z^\bullet_2)$ into $(Z^\bullet_1\!+\!f\!,\!Z^\bullet_2\!-\!f\!+\!t)$.}
    \label{fig:2sum2}
\end{subfigure}\hfill
\begin{subfigure}[t]{0.4\textwidth}
    \centering
    \includegraphics[width=0.8\textwidth]{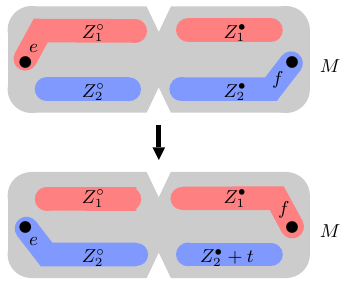}
    \caption{The corresponding step of the $\cX$-$\cY$ exchange sequence transforming $(Z^\circ_1\! \cup\! Z^\bullet_1, Z^\circ_2\! \cup\! Z^\bullet_2)$ into $((Z^\circ_1\!-\!e)\! \cup\! (Z^\bullet_1\!+\!f), (Z^\circ_2\!+\!e)\!\cup\! (Z^\bullet_2\!-\!f))$.}
    \label{fig:2sum3}
\end{subfigure}
\caption{Illustration of Lemma~\ref{lem:2sum}, where steps of the $\cX'$-$\cY'$ and $\cX''$-$\cY''$ exchange sequences involving $t$ are combined to obtain a step of the $\cX$-$\cY$ exchange sequence.}
\label{fig:2sum}
\end{figure}

\subsection{Excluding Cocircuits of Size Three}
\label{sec:cocircuit}

Recall that a triad is a cocircuit of size three. Let $M$ be a matroid, $\cX=(X_1,X_2)$ and $\cY=(Y_1,Y_2)$ be compatible pairs of disjoint bases of $M$, and $T=\{t_1,t_2,t_3\}$ be a triad of $M$ such that $T\subseteq X_1\cup X_2=Y_1\cup Y_2$. The pairs $\cX$ and $\cY$ are called \emph{consistent on $T$} if $X_1 \cap T = Y_1 \cap T$ or $X_1 \cap T = Y_2 \cap T$, that is, the bases in $\cX$ partition the elements of $T$ the same way as the bases in $\cY$. We will use the following simple technical claim.

\begin{cl}\label{cl:two}
Let $\cX=(X_1,X_2)$ be a pair of disjoint bases of a matroid $M$ and $T=\{t_1,t_2,t_3\}\subseteq X_1\cup X_2$ be a triad of $M$ such that $|X_1\cap T|=2$. Then, $((X_1-T)+\{t_i,t_j\},(X_2-T)+t_k)$ forms a pair of disjoint bases for at least two choices of indices satisfying $\{i,j,k\}=\{1,2,3\}$.
\end{cl}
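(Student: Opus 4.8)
The plan is to reduce the statement to a small combinatorial core and then drive everything with Lemma~\ref{lem:int} (a circuit meets a cocircuit in a number of elements $\neq 1$) together with the circuit elimination axiom. Since $X_1,X_2$ are disjoint and $T\subseteq X_1\cup X_2$ with $|X_1\cap T|=2$, after relabeling I may assume $t_1,t_2\in X_1$ and $t_3\in X_2$. Put $F_1:=X_1-T$ and $F_2:=X_2-T$; these are independent of sizes $r-2$ and $r-1$, with $F_1\cap F_2=\emptyset$, $t_3\notin F_1$, and $t_1,t_2\notin F_2$. Of the three index splits, the split $(\{1,2\},3)$ simply reproduces $(X_1,X_2)$, which is trivially a pair of disjoint bases; so it suffices to produce one further split that works, i.e.\ to find $i\in\{1,2\}$ with complementary index $j$ (so $\{i,j\}=\{1,2\}$) such that both $F_1+t_3+t_i$ and $F_2+t_j$ are bases. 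Disjointness of these two sets is automatic because $F_1$, $F_2$, and $T$ are pairwise disjoint in the relevant way.

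I would then prove three observations, each by exhibiting a circuit meeting the triad $T$ in a single element and invoking Lemma~\ref{lem:int}. (1) Both $F_2+t_1$ and $F_2+t_2$ are bases: if $F_2+t_i$ were dependent, a circuit $C$ inside it would contain $t_i$ (as $F_2$ is independent) while lying in $(X_2-t_3)+t_i$, a set disjoint from $t_3$ and from the other element of $\{t_1,t_2\}$, so $C\cap T=\{t_i\}$. (2) $F_1+t_3$ is independent: otherwise a circuit $C\subseteq (X_1-\{t_1,t_2\})+t_3$ would contain $t_3$ and avoid $t_1,t_2$, giving $C\cap T=\{t_3\}$. (3) At least one of $F_1+t_3+t_1$, $F_1+t_3+t_2$ is a basis — this is the main step.

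For (3) I argue by contradiction. If both sets are dependent, then by (2), for $i=1,2$ there is a circuit $C_i$ with $t_i\in C_i\subseteq F_1+t_3+t_i$; since $C_i$ avoids the other element of $\{t_1,t_2\}$ and $|C_i\cap T|\neq 1$, we get $t_3\in C_i$. Now $C_1\neq C_2$ (as $t_1\in C_1\setminus C_2$) and $t_3\in C_1\cap C_2$, so circuit elimination yields a circuit contained in $(C_1\cup C_2)-t_3\subseteq F_1+t_1+t_2=X_1$, contradicting that $X_1$ is independent. To finish, pick by (3) some $F_1+t_3+t_i$ with $i\in\{1,2\}$ that is a basis; with $j$ the complementary index, $((X_1-T)+\{t_i,t_3\},\,(X_2-T)+t_j)$ is a pair of disjoint bases by (1), and it differs from $(X_1,X_2)$ since it places $t_3$ in the first member. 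This is the required second index split.

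The only point I expect to need a spark of insight is the use of circuit elimination in step (3): eliminating the common element $t_3$ from $C_1$ and $C_2$ precisely because this forces the resulting circuit into the independent set $X_1$. Everything else is mechanical once one commits to chasing single-element intersections with $T$. Note that the argument uses neither binariness nor regularity — only the circuit axioms and Lemma~\ref{lem:int} — so it proves the claim for arbitrary matroids as stated.
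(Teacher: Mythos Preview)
Your proof is correct and follows essentially the same approach as the paper's: both arguments reduce to repeated applications of Lemma~\ref{lem:int} to rule out circuits meeting the triad $T$ in a single element. The only organisational difference is that the paper starts on the $X_1$ side by taking the fundamental circuit of $t_3$ in $X_1+t_3$ (whose uniqueness immediately gives that some $X_1-t_i+t_3$ is a basis), whereas you first handle the $X_2$ side for both indices and then recover the $X_1$ side via circuit elimination in your step~(3); this is a harmless variation, and your observation that the argument needs only the circuit axioms and Lemma~\ref{lem:int} (no binariness) matches the paper's level of generality.
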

\begin{proof}
Without loss of generality, we may assume that $t_1,t_2\in X_1$. Since $X_1$ is a basis of $M$, $X_1+t_3$ contains a unique circuit $C$. By Lemma~\ref{lem:int}, the intersection of $C$ and $T$ has size different from one, hence $C\cap\{t_1,t_2\}\neq\emptyset$. We may assume that $t_1\in C$, implying $X_1-t_1+t_3$ being a basis. It remains to show that $X_2+t_1-t_3$ is also a basis. Suppose to the contrary that this does not hold, that is, $X_2+t_1-t_3$ contains a circuit $C'$. Then, by $X_2\cap T=\{t_3\}$, we get $C'\cap T=\{t_1\}$, contradicting Lemma~\ref{lem:int}.
\end{proof}

First, we show that if the basis pairs $\cX,\cY$ are not consistent on a triad $T$, then one can obtain another pair $\cX',\cY'$ that are consistent on $T$ at the cost of at most two symmetric exchanges. 

\begin{lem}\label{lem:cocircuit1}
Let $\cX=(X_1,X_2)$ and $\cY=(Y_1,Y_2)$ be compatible pairs of disjoint bases of a matroid $M$ that are not consistent on a triad $T=\{t_1,t_2,t_3\}\subseteq X_1\cup X_2$. Then, there exists compatible pairs of disjoint bases $\cX'=(X'_1,X'_2)$ and $\cY'=(Y'_1,Y'_2)$ that are consistent on $T$ and are obtained by applying at most one symmetric exchange to $\cX$ and to $\cY$, respectively.  
\end{lem}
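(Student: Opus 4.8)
The plan is to reduce the statement to a pigeonhole argument over the three ways of splitting the triad $T$, with Claim~\ref{cl:two} supplying the individual symmetric exchanges.

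First I would set up notation. Since $T=\{t_1,t_2,t_3\}$ is a cocircuit and $X_1,X_2$ are disjoint bases with $X_1\cup X_2=E\supseteq T$, the triad $T$ meets both $X_1$ and $X_2$; hence $\{X_1\cap T,\,X_2\cap T\}$ is a partition of $T$ into a two-element set and a singleton, and the same holds for $\{Y_1\cap T,\,Y_2\cap T\}$. There are exactly three such partitions of $T$, and $\cX$ is consistent with $\cY$ on $T$ precisely when the two partitions coincide (so the hypothesis says they differ). After relabelling we may assume $|X_1\cap T|=2$, so $\cX$ has the form $\bigl((X_1-T)+\{t_i,t_j\},\,(X_2-T)+t_k\bigr)$.

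Next I would read off the partitions reachable by one exchange. By Claim~\ref{cl:two} applied to $\cX$, at least two of the three partitions of $T$ are realised by a pair $\bigl((X_1-T)+\{t_i,t_j\},\,(X_2-T)+t_k\bigr)$ consisting of disjoint bases; one of them is the partition of $\cX$ itself. Any \emph{other} realised partition, with two-element part $\{t_i,t_j\}\ne X_1\cap T$, is obtained from $\cX$ by a single symmetric exchange: one removes from $X_1$ the element of $X_1\cap T$ lying outside $\{t_i,t_j\}$ and inserts the unique element of $X_2\cap T$ into $X_1$, and the two sets that result are exactly the bases certified by Claim~\ref{cl:two}; a one-line check shows the removed element lies in $X_1-X_2$ and the inserted one in $X_2-X_1$, so this is a legal symmetric exchange on the pair $\cX$. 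Thus the collection $R_{\cX}$ of partitions of $T$ induced by pairs of disjoint bases obtainable from $\cX$ by at most one symmetric exchange has $|R_{\cX}|\ge 2$. Applying the same reasoning to $\cY$ (using whichever of $Y_1,Y_2$ meets $T$ in two elements) gives a collection $R_{\cY}$ with $|R_{\cY}|\ge 2$.

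Finally, since $T$ admits only three partitions of the relevant type and $|R_{\cX}|+|R_{\cY}|\ge 4$, we have $R_{\cX}\cap R_{\cY}\ne\emptyset$; pick $P\in R_{\cX}\cap R_{\cY}$. Let $\cX'=(X'_1,X'_2)$ be obtained from $\cX$ by the at-most-one symmetric exchange realising $P$, and $\cY'=(Y'_1,Y'_2)$ from $\cY$ analogously. Both are pairs of disjoint bases, and since a symmetric exchange leaves the union of the pair unchanged, $X'_1\cup X'_2=X_1\cup X_2=Y_1\cup Y_2=Y'_1\cup Y'_2$, so $\cX'$ and $\cY'$ are compatible; by the choice of $P$ they induce the same partition of $T$, i.e.\ they are consistent on $T$. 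The only point that genuinely needs care is the middle-paragraph claim that a different realisable partition is a single symmetric exchange away from $\cX$; but this is immediate once one uses that $X_1\cap T$ is a two-element set and $X_2\cap T$ its complementary singleton, so I do not expect a real obstacle here.
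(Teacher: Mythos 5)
Your proposal is correct and follows essentially the same route as the paper: define the three candidate partitions of $T$, invoke Claim~\ref{cl:two} to get at least two realizable ones on each side, and pigeonhole over the three possible partitions to find a common one. The only difference is cosmetic — you explicitly verify that moving to a different realizable partition is a legal symmetric exchange, which the paper leaves implicit.
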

\begin{proof}
Define $\cP_1\coloneqq\{((X_1-T)+\{t_i,t_j\},(X_2-T)+t_k)\mid\{i,j,k\}=\{1,2,3\}\}$ if $|X_1\cap T|=2$ and $\cP_1\coloneqq\{((X_1-T)+t_i,(X_2-T)+\{t_j,t_k\})\mid\{i,j,k\}=\{1,2,3\}\}$ otherwise. Observe that each member of $\cP_1$ can be obtained from $\cX$ by exchanging at most one pair of elements; however, this might not be a feasible symmetric exchange. Similarly, define $\cP_2\coloneqq\{ (( Y_1-T)+\{t_i,t_j\},(Y_2-T)+t_k)\mid\{i,j,k\}=\{1,2,3\}\}$ if $|Y_1\cap T|=2$ and $\cP_2\coloneqq\{((Y_1-T)+t_i,(Y_2-T)+\{t_j,t_k\})\mid\{i,j,k\}=\{1,2,3\}\}$ otherwise. Observe that each member of $\cP_2$ can be obtained from $\cY$ by exchanging at most one pair of elements; again, this might not be a feasible symmetric exchange.

By Claim~\ref{cl:two}, at least two members of $\cP_1$ and at least two members of $\cP_2$ consist of disjoint bases. Therefore, there exist $\cX'\in\cP_1$ and $\cY'\in\cP_2$ that are consistent on $T$, concluding the proof of the lemma.
\end{proof}

Once the basis pairs are consistent on a triad, the problem size can be decreased by contracting and deleting appropriate elements of the triad.  

\begin{lem}\label{lem:cocircuit2}
Let $\cX=(X_1,X_2)$ and $\cY=(Y_1,Y_2)$ be compatible pairs of disjoint bases of a matroid $M$ that are consistent on a triad $T=\{t_1,t_2,t_3\}\subseteq X_1\cup X_2$  where $t_1,t_2\in X_1$, and let $F\subseteq ((X_1\cap Y_1)\cup(X_2\cap Y_2))-T$. Define $\cX'\coloneqq(X_1-t_2,X_2-t_3)$, and set $\cY'\coloneqq (Y_1-t_2,Y_2-t_3)$ if $t_1,t_2\in Y_1$ and $\cY'\coloneqq (Y_1-t_3,Y_2-t_2)$ otherwise. If there exists an $F$-avoiding $\cX'$-$\cY'$ exchange sequence in $M/t_2\backslash t_3$ of width $w$ and length $\ell$, then there exists an $F$-avoiding $\cX$-$\cY$ exchange sequence in $M$ of width $w$ and length at most $\ell+ w$. Furthermore, if $h\in E-(T\cup F)$ is used in the last step of the $\cX'$-$\cY'$ exchange sequence, then it can be assumed to be used in the last step of the $\cX$-$\cY$ exchange sequence as well. 
\end{lem}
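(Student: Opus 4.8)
The plan is to transfer an exchange sequence from the minor $M'\coloneqq M/t_2\backslash t_3$ back up to $M$, re-attaching $t_2$ to one side of each pair and $t_3$ to the other while tracking the position of the remaining triad element $t_1$. Since $T$ is a triad, $E-T$ is a hyperplane of $M$; from this one checks that $t_1$ is neither a loop nor a coloop of $M'$ and that $\cX'=(X_1-t_2,X_2-t_3)$ and $\cY'$ (in whichever of the two cases of the statement occurs) are compatible pairs of disjoint bases of $M'$, with $t_1$ on the first side of $\cX'$, on the first side of $\cY'$ in the first case, and on the second side in the second case. The key is the following \emph{lift rule}: if $(Z_1,Z_2)$ is a basis pair of $M'$ -- so $Z_1,Z_2$ partition $(X_1\cup X_2)-\{t_2,t_3\}$, a set containing $t_1$ -- and $t_1\in Z_1$, then $Z_1+t_2\in\cB(M)$ by definition of $M/t_2$, while $Z_2\subseteq E-T$ spans the hyperplane $E-T$ in $M$, so $Z_2+t_3\in\cB(M)$ as well; symmetrically, if $t_1\in Z_2$ then $(Z_1+t_3,Z_2+t_2)$ is a disjoint basis pair of $M$. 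One verifies that the lifts of the two endpoints of the $\cX'$-$\cY'$ sequence are precisely $\cX$ and $\cY$, using that $\cX$ and $\cY$ are consistent on $T$.

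I would then break the given $F$-avoiding $\cX'$-$\cY'$ exchange sequence (width $w$, length $\ell$) into maximal \emph{runs} of exchanges not involving $t_1$, separated by the at most $w$ exchanges that do involve $t_1$, the \emph{transitions}. Inside a run $t_1$ never changes side, and performing the same exchange in $M$ on the corresponding lifted pairs is again a valid symmetric exchange (because the side without $t_1$ spans $E-T$), so a run of length $k$ lifts to $k$ exchanges of $M$. The delicate point is a transition. Such a step swaps $t_1$ for some $f\in E-T$ and flips the side of $t_1$; writing $Z_1\ni t_1$ for the first side just before the transition, $P\coloneqq Z_1-t_1$ and $Q$ for the other side minus $f$, the lifted pairs before and after are $(P+t_1+t_2,\,Q+f+t_3)$ and $(P+f+t_3,\,Q+t_1+t_2)$ (or the two sides interchanged), which differ by two exchanges of $M$. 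I claim one of the routes ``$t_2\leftrightarrow t_3$, then $t_1\leftrightarrow f$'' and ``$t_1\leftrightarrow t_3$, then $t_2\leftrightarrow f$'' is always valid: by the lift rule every intermediate pair is automatically a disjoint basis pair of $M$ except possibly for its one set $P+t_1+t_3$, resp.\ $P+t_2+t_3$, so validity reduces to $t_3\notin\mathrm{cl}_M(P+t_1)$, resp.\ $t_3\notin\mathrm{cl}_M(P+t_2)$. If both failed, submodularity of $r_M$ applied to $P+t_1+t_3$ and $P+t_2+t_3$ -- whose union contains the $M$-basis $P+t_1+t_2=Z_1+t_2$ -- would force $t_3\in\mathrm{cl}_M(P)$; but $P$ lies in the $M'$-basis $P+f$, which spans $E-T$ in $M$, so $\mathrm{cl}_M(P)\subseteq E-T$ and $t_3\notin\mathrm{cl}_M(P)$, a contradiction. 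Transitions that bring $t_1$ back to the first side are handled symmetrically, and in both directions one can choose the detour so that it ends with an exchange using $f$. Verifying this transition step, and the bookkeeping of which elements the detour uses, is the main obstacle.

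Assembling the lifted runs and transitions yields an $\cX$-$\cY$ exchange sequence of length at most $\ell+w$, since each of the at most $w$ transitions costs one extra exchange. Its width is still at most $w$: every element outside $T$ occurs exactly as often as in the $\cX'$-$\cY'$ sequence, while each of $t_1,t_2,t_3$ is used exactly once per transition and hence at most $w$ times. It is $F$-avoiding because $F\cap T=\emptyset$ and the element $f$ used in a transition already occurs in the ($F$-avoiding) $\cX'$-$\cY'$ sequence, so $f\notin F$. Finally, for the last-step refinement, note that a last exchange lying inside a run lifts to a single exchange on the same two elements, and that each chosen detour for a transition ends with an exchange using $f$; hence an element $h\in E-(T\cup F)$ used in the last step in $M'$ -- necessarily $h\neq t_1$, so $h=f$ if that step is a transition -- can be taken to be used in the last step of the $\cX$-$\cY$ exchange sequence.
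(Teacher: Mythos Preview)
Your proof is correct and follows essentially the same approach as the paper: the same lift rule $(Z_1,Z_2)\mapsto(Z_1+t_2,Z_2+t_3)$ tracking the side of $t_1$, the same direct lifting of steps not touching $t_1$, and the same two candidate two-step detours through the intermediate pairs $(P+t_1+t_3,Q+f+t_2)$ and $(P+t_2+t_3,Q+f+t_1)$ for each step that does. The only real difference is in how you certify that at least one detour is valid: the paper invokes Claim~\ref{cl:two} (hence the circuit--cocircuit intersection Lemma~\ref{lem:int}), while you argue directly via submodularity of $r_M$ that $t_3\in\mathrm{cl}_M(P+t_1)\cap\mathrm{cl}_M(P+t_2)$ would force $t_3\in\mathrm{cl}_M(P)\subseteq E-T$; both arguments are short and equivalent in strength.
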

\begin{proof}
For any pair of disjoint bases $\cZ=(Z_1,Z_2)$ of $M/t_2\backslash t_3$, we denote by $\cZ^+=(Z_1,Z_2)^+=(Z^+_1,Z^+_2)$ the pair where, for $i=1,2$, $Z^+_i\coloneqq Z_i+t_2$ if $t_1\in Z_i$ and $Z^+_i\coloneqq Z_i+t_3$ otherwise. Observe that $\cZ^+$ is a pair of disjoint bases of $M$. Indeed, $Z^+_i=Z_i+t_2$ is a basis of $M$ by the definition of contraction. If $t_1\notin Z_i$, then $Z_i+\{t_2,t_3\}$ contains a unique circuit $C$ that contains $t_3$. By Lemma~\ref{lem:int}, the intersection of $C$ and $T$ cannot have size one, hence $t_2\in C$ as well, showing that $Z_i+t_3$ is a basis of $M$.

Fix an $F$-avoiding $\cX'$-$\cY'$ exchange sequence in $M/t_2\backslash t_3$ of length $\ell$ and width $w$. The idea is to add certain extra steps to obtain a solution to the original instance. Consider a symmetric exchange in the sequence that transforms $(Z_1,Z_2)$ into $(Z_1-e+f,Z_2-f+e)$. Without loss of generality, we may assume that $t_1\in Z_1$. If $e$ is distinct from $t_1$, then $(Z_1,Z_2)^+=(Z_1+t_2,Z_2+t_3)$ and $(Z_1-e+f,Z_2-f+e)^+=(Z_1-e+\{t_2,f\},Z_2-f+\{t_3,e\})$, hence these pairs also differ in a single symmetric exchange in $M$. However, if $e=t_1$ then $(Z_1,Z_2)^+=(Z_1+t_2,Z_2+t_3)$ and $(Z_1-t_1+f,Z_2-f+t_1)^+=(Z_1-t_1+\{t_3,f\},Z_2-f+\{t_1,t_2\})$, and these pairs cannot be obtained from each other by a single symmetric exchange. In this case, consider the pairs $(Z_1+t_3,Z_2+t_2)$ and $(Z_1-t_1+\{t_2,t_3\},Z_2+t_1)$. By Claim~\ref{cl:two}, at least one of these pairs consists of disjoint bases of $M$. Furthermore, any of them can be obtained from both $(Z_1,Z_2)^+$ and $(Z_1-t_1+f,Z_2-f+t_1)^+$ by using a single symmetric exchange. 

Summarizing the above, a symmetric exchange of elements $e$ and $f$ in the $\cX'$-$\cY'$ exchange sequence is left unchanged if $e,f\neq t_1$. Otherwise, if, say, $e=t_1$, it is replaced by two steps: the first exchanging $t_1$ and $t_i$ and the second exchanging $t_j$ and $f$ for some appropriate choice of $i$ and $j$ satisfying $\{i,j\}=\{2,3\}$. Observe that these modifications do not increase the usage of an element in $E-\{t_2,t_3\}$, hence the width of the new sequence is also $w$. Furthermore, the length of the sequence increases by the number of symmetric exchanges involving $t_1$, hence the length of the new sequence is at most $\ell+w$. Finally, note that the new sequence is $F$-avoiding as well, and its last step uses the elements of $E-(T\cup F)$ that were involved in the last step of the $\cX'$-$\cY'$ exchange sequence, thus concluding the proof of the lemma.
\end{proof}

The two lemmas allow us to reduce the problem size if the matroid contains a triad. Indeed, the basis pairs can be made consistent on any triad with the help of Lemma~\ref{lem:cocircuit1}, which requires at most two symmetric exchanges. Once the basis pairs are consistent on a triad, we can decrease the number of elements as in Lemma~\ref{lem:cocircuit2}. If the exchange sequence in the reduced instance has width $w$ and length $\ell$, then we get an exchange sequence of width $w+2$ and length $\ell+w+2$ for the original instance. 

\subsection{Algorithmic Aspects}
\label{sec:algored}

The preprocessing steps discussed in the previous subsections do not only reduce the problem size in a theoretical sense, but are also algorithmically tractable if the matroid $M$ is given by an independence oracle. Recall that a 1-sum or 2-sum decomposition of $M$ can be determined, if exists, efficiently. Thus it suffices to show that one can find a triad or a tight set of a matroid using a polynomial number of oracle calls. 

By definition, a triad is a cocircuit of size three, or equivalently, a circuit of size three of the dual matroid. Since an independence oracle of the dual matroid can be implemented using the independence oracle of $M$, the existence of such a circuit can be decided by checking every 3-elements subset of the ground set.

Assume now that the ground set of $M$ is the disjoint union of two bases, say $X_1$ and $X_2$. Then for any set $Z$, we have $2\cdot r_M(Z)\geq |X_1\cap Z|+|X_2\cap Z|=|Z|$, and equality holds if and only if $Z$ is tight. Hence to decide whether $X_1\cup X_2$ properly contains a nonempty tight set of $M$, it suffices to minimize the submodular function $f(Z)\coloneqq r_M(Z)-|Z|/2$ over the sets $\emptyset\neq Z\subsetneq X_1\cup X_2$, which can be performed in strongly polynomial time if given access to the independence oracle~\cite{cunningham1984testing}.

Finally, we show that the width and length bounds of Lemma~\ref{lem:disjoint}, Lemma~\ref{lem:tight}, Lemma~\ref{lem:2sum}, Lemma~\ref{lem:cocircuit1} and Lemma~\ref{lem:cocircuit2} are consistent with the statement of Theorem~\ref{thm:regwhite}. Before that, we need the following simple observation.

\begin{cl}\label{cl:triv}
Let $\cX=(X_1,X_2)$ and $\cY=(Y_1,Y_2)$ be compatible basis pairs of a matroid $M$ of rank $r\leq 2$, $F\subseteq (X_1\cap Y_1)\cup (X_2\cap Y_2)$. Then there exists an $F$-avoiding $\cX$-$\cY$ exchange sequence of width at most $1$ and length at most $r$. Furthermore, if $h\in (X_1\cup X_2)-F$, then the last step of the sequence can be assumed to use $h$.
\end{cl}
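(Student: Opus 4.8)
The plan is to dispose of the trivial cases $r=0$ and $r=1$ first, then handle $r=2$ by an explicit case analysis on how much $\cX$ and $\cY$ already agree. When $r=0$ both bases are empty and the empty exchange sequence works. When $r=1$ each basis is a single element; compatibility forces $\{X_1,X_2\}=\{Y_1,Y_2\}$ as multisets. If $\cX=\cY$ we are done with zero steps; otherwise $X_1=\{a\}$, $X_2=\{b\}$ with $a\neq b$ and $Y_1=\{b\}$, $Y_2=\{a\}$, so a single symmetric exchange of $a$ and $b$ transforms $\cX$ into $\cY$. In both sub-cases the sequence has width at most $1$ and length at most $r$; the last (only, if any) step uses both $a$ and $b$, and since $F\subseteq (X_1\cap Y_1)\cup(X_2\cap Y_2)$ is empty here when a swap is needed, the claim about $h\in(X_1\cup X_2)-F$ is immediate.

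For $r=2$, I would first use Lemma~\ref{lem:disjoint} to reduce to the case where $X_1\cap X_2=Y_1\cap Y_2=\emptyset$: if $|X_1\cap X_2|\ge 1$, then after contracting $X_1\cap X_2$ we are in a matroid of rank at most $1$, the reduced claim follows from the $r\le 1$ case just handled, and Lemma~\ref{lem:disjoint} lifts both the width/length bounds and the ``$h$ used in the last step'' property back. (Note $F'=F-(X_1\cap X_2)$ and $h\in(X_1\cup X_2)-F$ either lies in $X_1\cap X_2$, in which case one can append a trivial re-use, or survives the contraction.) So assume $X_1,X_2$ are disjoint bases of rank $2$, hence $|E|=|X_1\cup X_2|\le 4$, and similarly $Y_1,Y_2$ partition the same ground set $E$ with $|E|=2\cdot 2=4$ unless some elements lie in no basis — but by the remark after Lemma~\ref{lem:disjoint} we may delete those, so $|E|=4$. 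Write $X_1=\{a,b\}$, $X_2=\{c,d\}$. The set $F$ consists of elements fixed in the same basis by both pairs, and $|F|\le 3$; in particular at least one element, say the one we shall move, is not in $F$.

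Now the heart of the matter is a small finite check. Since $\cX$ and $\cY$ are compatible with $X_1\cap X_2=Y_1\cap Y_2=\emptyset$, each element of $E$ occupies the same ``position index'' (first vs. second basis) up to the global relabeling, so $\{Y_1,Y_2\}$ is obtained from $\{X_1,X_2\}$ by moving some subset $S$ of elements from $X_1$ to $X_2$ and the complementary-cardinality subset from $X_2$ to $X_1$; as the bases have equal size, $|S\cap X_1|=|S\cap X_2|$. If $\cX=\cY$ we are done with the empty sequence. Otherwise, after possibly swapping the roles of $Y_1$ and $Y_2$ (which is allowed: ``$X_1\cap T$'' type conditions and the statement only see $\cX,\cY$ up to this), exactly one pair of elements needs to be exchanged, i.e. $Y_1=X_1-e+f$, $Y_2=X_2-f+e$ for some $e\in X_1-X_2$, $f\in X_2-X_1$, and by compatibility these are honest bases, so this single symmetric exchange does the job: width $1$, length $1\le 2=r$. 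The only case requiring genuine attention is when $\cX$ and $\cY$ differ by moving \emph{both} elements of $X_1$ (equivalently $X_1\cap Y_1=\emptyset$), i.e. $\{Y_1,Y_2\}=\{X_2,X_1\}$ but as an \emph{ordered} pair $\cY=(X_2,X_1)$: here one exchange cannot suffice (the first basis must change completely), and the plan is to use two exchanges. Pick $e\in X_1$; since $X_2$ is a basis, $X_2+e$ contains a unique circuit $C$, and by the rank-$2$ exchange axiom there is $f\in X_2$ with $X_1-e+f,\ X_2-f+e\in\cB(M)$; perform this, reaching $(\{e'\!,f\},\{e,f'\})$ where $X_1=\{e,e'\}$, $X_2=\{f,f'\}$, then perform the symmetric exchange of $e'$ and $f'$ (again valid since $\{f',f\}=X_2$ and $\{e,e'\}=X_1$ are bases), reaching $(\{f,f'\},\{e,e'\})=(X_2,X_1)=\cY$. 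This gives length $2=r$ and width $1$. For the final assertion: whichever element $h\in(X_1\cup X_2)-F$ is prescribed, in the one-exchange case the moved pair $\{e,f\}$ can be chosen with $h\in\{e,f\}$ because $F$ misses $h$, so $h$ is used in the (last) step; in the two-exchange case every element of $E$ is used in \emph{some} step, and the ordering of the two exchanges can be chosen so that $h$ appears in the last one.

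The only place where something could go wrong is the verification that the particular one- or two-step exchange is \emph{feasible} (each intermediate and final pair consists of disjoint bases), but in rank $2$ this is forced by compatibility together with the symmetric exchange axiom (Lemma~\ref{lem:int} handling the edge cases), so no real obstacle arises; the proof is essentially a bookkeeping exercise over the at-most-$4$-element ground set.
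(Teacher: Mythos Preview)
The paper's own proof is a single sentence (``straightforward to check''), so there is nothing substantive to compare against; your write-up is an honest attempt to flesh out what the authors left implicit. The existence part---an $F$-avoiding sequence of width at most $1$ and length at most $r$---is handled correctly by your case analysis, modulo two stylistic slips: the aside about ``swapping the roles of $Y_1$ and $Y_2$'' is not actually permissible (the definition of $F$ and of an $\cX$-$\cY$ sequence are not symmetric in $Y_1,Y_2$), and the remark about ``append[ing] a trivial re-use'' when $h\in X_1\cap X_2$ does not make sense, since an element of $X_1\cap X_2$ can never participate in a symmetric exchange. Neither slip is essential to the existence argument.

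The genuine gap is in the ``Furthermore'' clause for the one-exchange case. You write that ``the moved pair $\{e,f\}$ can be chosen with $h\in\{e,f\}$ because $F$ misses $h$'', but this is false: when $|X_1\cap Y_1|=1$ the elements $e\in X_1\setminus Y_1$ and $f\in X_2\setminus Y_2$ are \emph{uniquely determined} by $\cX$ and $\cY$. The condition $h\notin F$ is strictly weaker than $h\notin (X_1\cap Y_1)\cup(X_2\cap Y_2)$, and only the latter would force $h\in\{e,f\}$. Concretely, take $M=U_{1,2}\oplus U_{1,2}$ on $\{a,d\}\cup\{b,c\}$ (a regular rank-$2$ matroid), $\cX=(\{a,b\},\{c,d\})$, $\cY=(\{a,c\},\{b,d\})$, $F=\emptyset$, $h=a$. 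Any width-$1$, length-$\le 2$ sequence from $\cX$ to $\cY$ whose last step uses $a$ would have to move $a$ exactly once, into $Y_1$; but $a$ already sits in $X_1=Y_1$'s ``side'', so it would have to leave in an earlier step, contradicting width $1$. No such sequence exists. In other words, the ``Furthermore'' clause as stated does not hold for arbitrary compatible pairs. This is not a defect you can repair: the claim is only actually invoked in the paper with $\cY=(X_2,X_1)$ (the Gabow setting of Theorem~\ref{thm:graphicgabow}), where your two-exchange argument is correct and every element does appear in some step. Your proof is fine for that intended use, but you should flag that the general statement of the ``Furthermore'' part is too strong.
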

\begin{proof}
The claim is straightforward to check for matroids of rank at most two.
\end{proof}

With the help of the claim, we are now ready to prove that the inverse operations of the reduction steps preserve the quadratic running time. Since 2-sum behaves differently for $F$-avoiding exchange sequences than the other operations, we do this in the form of two corollaries. Moreover, we state the corollaries parameterized by a constant $c\geq 1$; the reason is that we will choose $c$ to be $1$ for graphic matroids and $2$ for general regular matroids. For nondisjoint bases, tight sets and triads, we get the following.

\begin{cor}\label{cor:numstepstriad}
Let $\cX=(X_1,X_2)$ and $\cY=(Y_1,Y_2)$ be compatible pairs of bases of a matroid $M=(E,\cI)$ of rank $r\geq 3$ and let $F\subseteq (X_1\cap Y_1)\cup (X_2\cap Y_2)$. Assume that for any minor $M'=(E',\cI')$ of $M$ and for any pair $\cX',\cY'$ of compatible pairs of disjoint bases of $M'$, there exists an $F'$-avoiding $\cX'$-$\cY'$ exchange sequence in $M'$ of width at most $2\cdot c\cdot (r'-1)$ and length at most $c\cdot r'^{\,2}$, where $F'=F\cap E'$, $2\leq r'<r$ is the rank of $M'$ and $c\geq 1$. If either $X_1\cap X_2\neq\emptyset$, $M$ has a tight set $\emptyset\neq Z\subsetneq X_1\cup X_2$, or $M$ has a triad $T\subseteq X_1\cup X_2$, then there exists an $F$-avoiding $\cX$-$\cY$ exchange sequence in $M$ of width at most $2\cdot c\cdot (r-1)$ and length at most $c\cdot r^2$. 
\end{cor}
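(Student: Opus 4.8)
In each of the three cases the strategy is identical: via one of the reduction lemmas of this section produce an equivalent instance living on a proper minor $M'$ of $M$ of rank $r'$ with $1\le r'<r$; apply the inductive hypothesis when $r'\ge 2$ and Claim~\ref{cl:triv} when $r'\le 1$, so that in all cases the subinstance has an $F'$-avoiding exchange sequence of width at most $\max\{1,2c(r'-1)\}$ and length at most $c\,r'^{\,2}$ (the length bound holds even in the low-rank cases since $c\,r'^{\,2}\ge r'$); then feed this back through the corresponding lemma and verify the resulting width and length stay within $2c(r-1)$ and $c\,r^{2}$. The slack for this last step is exactly what $c\ge 1$ buys. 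To begin, by Lemma~\ref{lem:disjoint} applied with $F$ we may assume $X_1\cap X_2=\emptyset$, hence $E=X_1\cup X_2$: if $X_1\cap X_2\neq\emptyset$ then Lemma~\ref{lem:disjoint} already moves us to $M/(X_1\cap X_2)$ of rank $r'=r-|X_1\cap X_2|\le r-1$ with the bases disjoint, where $2c(r'-1)\le 2c(r-2)\le 2c(r-1)$ and $c\,r'^{\,2}\le c(r-1)^{2}\le c\,r^{2}$ (and trivially $1\le 2c(r-1)$, $r'\le c\,r^{2}$ when $r'\le 1$), and the sequence transfers back unchanged.

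\textbf{Tight set.} Suppose $\emptyset\neq Z\subsetneq E$ is tight and apply Lemma~\ref{lem:tight} with $F'=F\cap Z$, $F''=F-Z$. The subinstances live on $M|Z$ of rank $r'=r_M(Z)=|Z|/2$ and on $M/Z$ of rank $r''=r-r'$, with $r',r''\ge 1$, $r'+r''=r$, hence $r',r''\le r-1<r$. Each subinstance has a solution of width at most $\max\{1,2c(r'-1)\}$, resp. $\max\{1,2c(r''-1)\}$, and length at most $c\,r'^{\,2}$, resp. $c\,r''^{\,2}$, and Lemma~\ref{lem:tight} combines them into a solution of width $\max\{w',w''\}$ and length $\ell'+\ell''$. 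For the width: if both $r',r''\ge 2$ then $\max\{w',w''\}\le 2c(\max\{r',r''\}-1)\le 2c(r-2)$; if, say, $r'=1$ then $w'\le 1$ and $w''\le 2c(r-2)$ since $r''=r-1\ge 2$; either way the total width is at most $2c(r-1)$. For the length, $\ell'+\ell''\le c(r'^{\,2}+r''^{\,2})\le c\bigl((r'+r'')^{2}-2r'r''\bigr)=c(r^{2}-2r'r'')\le c(r^{2}-2)\le c\,r^{2}$, using $r'r''\ge 1$.

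\textbf{Triad.} Suppose $T=\{t_1,t_2,t_3\}\subseteq E$ is a triad, and first assume $F\cap T=\emptyset$. If $\cX,\cY$ are not consistent on $T$, replace them by consistent pairs $\cX',\cY'$ via Lemma~\ref{lem:cocircuit1}, at the cost of at most one symmetric exchange on each side; since those exchanges only move elements of $T$ and $F\cap T=\emptyset$, they are $F$-avoiding. Lemma~\ref{lem:cocircuit2} then reduces the problem (after relabelling) to an $F$-avoiding instance on the minor $M/t_2\backslash t_3$ of rank $r-1\ge 2$; the inductive hypothesis gives there a solution of width at most $2c(r-2)$ and length at most $c(r-1)^{2}$. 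Lemma~\ref{lem:cocircuit2} turns it into an $F$-avoiding $\cX'$-$\cY'$ sequence of width at most $2c(r-2)$ and length at most $c(r-1)^{2}+2c(r-2)$, and prepending/appending the at most two consistency exchanges yields an $F$-avoiding $\cX$-$\cY$ sequence of width at most $2c(r-2)+2$ and length at most $c(r-1)^{2}+2c(r-2)+2=c\,r^{2}-3c+2$. Since $c\ge 1$, both $2c(r-2)+2\le 2c(r-1)$ and $c\,r^{2}-3c+2\le c\,r^{2}$.

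\textbf{The case $F\cap T\neq\emptyset$, and the main obstacle.} Every element of $F\cap T$ sits in the same basis of $\cX$ as of $\cY$ and so must be frozen; the difficulty is that it cannot be contracted or deleted in Lemma~\ref{lem:cocircuit2}. When $\cX,\cY$ are consistent on $T$ this is painless: run Lemma~\ref{lem:cocircuit2} with $F-T$ in place of $F$ to reach $M/t_2\backslash t_3$, then apply the inductive hypothesis to the reduced instance with the full forbidden set $F'=F\cap E'=\bigl(F-\{t_2,t_3\}\bigr)$ -- the elements of $F\cap T$ outside $\{t_2,t_3\}$ are still frozen there; because that solution avoids $t_1$ whenever $t_1\in F$, the reconstruction in Lemma~\ref{lem:cocircuit2} inserts no extra steps, its output avoids all of $T$ and hence all of $F$, and width and length are unchanged (so well within the target). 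The genuinely delicate point is the interplay of a frozen element of $T$ with the consistency step when $\cX,\cY$ are not consistent on $T$: Lemma~\ref{lem:cocircuit1} moves elements of $T$, so one must argue that the consistency-making exchanges can be chosen to avoid $F\cap T$, and I expect this to come down to a short case analysis of which of the three members of $\cP_1$ (respectively $\cP_2$) in the proof of Lemma~\ref{lem:cocircuit1} are feasible, in conjunction with Lemma~\ref{lem:int} and Claim~\ref{cl:two}. Aside from that, the only remaining checks are boundary instances -- $r=3$, where the reduced rank is exactly $2$ and the hypothesis still applies, and a tight-set piece of rank $1$, handled by Claim~\ref{cl:triv} -- which are straightforward.
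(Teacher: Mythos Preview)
Your argument is essentially the paper's proof: the same three cases, the same lemmas (Lemma~\ref{lem:disjoint} for nondisjoint bases, Lemma~\ref{lem:tight} for a tight set, Lemmas~\ref{lem:cocircuit1} and~\ref{lem:cocircuit2} for a triad), and the same width/length arithmetic, including the use of Claim~\ref{cl:triv} for low-rank pieces. The numerical checks you write out---in particular $2c(r-2)+2\le 2c(r-1)$ and $c(r-1)^2+2c(r-2)+2=cr^2-3c+2\le cr^2$ for the triad case---are exactly what the paper does.

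Where you diverge is the last section on $F\cap T\neq\emptyset$. You are right that there is a wrinkle here: Lemma~\ref{lem:cocircuit2} is stated only for $F\subseteq((X_1\cap Y_1)\cup(X_2\cap Y_2))-T$, and Lemma~\ref{lem:cocircuit1} may move elements of $T$. The paper's proof of the corollary simply invokes these two lemmas without isolating this subcase, so in effect it treats the triad branch under the implicit hypothesis $F\cap T=\emptyset$. That hypothesis is satisfied in every application: in Theorem~\ref{thm:regwhite} the corollary is used with $F=\emptyset$, and in Theorem~\ref{thm:graphicwhite} the triad $\delta(u)$ is explicitly chosen so that $\delta(u)\cap F=\emptyset$. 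So your attempt to push through the general $F\cap T\neq\emptyset$ case, and the obstacle you identify when the pairs are inconsistent on $T$, goes beyond what the paper actually proves or needs. If your aim is to match the paper, you can drop that discussion entirely and handle the triad case under the tacit assumption $F\cap T=\emptyset$.
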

\begin{proof}
If $X_1\cap X_2\neq\emptyset$, then let $r'$ denote the rank of $M/(X_1\cap X_2)$. Note that $r'<r$ holds. Our assumption, Claim~\ref{cl:triv}, and Lemma~\ref{lem:disjoint} then imply the existence of an $F$-avoiding $\cX$-$\cY$ exchange sequence of width at most $\max\{1,2\cdot c\cdot (r'-1)\}<2\cdot c\cdot (r-1) $ and length at most $\max\{1,c\cdot r'^{\,2}\}<c\cdot r^2$. 

Otherwise, $X_1\cap X_2=Y_1\cap Y_2=\emptyset$. If $\emptyset\neq Z\subsetneq X_1\cup X_2$ is a tight set, then let $r'$ and $r''$ denote the ranks of $M|Z$ and $M/Z$, respectively. Note that $r=r'+r''$ holds. Our assumption, Claim~\ref{cl:triv}, and Lemma~\ref{lem:tight} then imply the existence of an $F$-avoiding $\cX$-$\cY$ exchange sequence of width at most $\max\left\{\max\{1,2\cdot c\cdot (r'-1)\},\max\{1,2\cdot c\cdot(r''-1)\}\right\}<2\cdot c\cdot (r-1)$ and length at most $\max\{1,c\cdot r'^{\,2}\}+\max\{1,c\cdot r''^{\,2}\}<c\cdot r^2$.

If $T=\{t_1,t_2,t_3\}\subseteq X_1\cup X_2$ is a triad of $M$, then let $r'$ denote the rank of $M/t_2\backslash t_3$. Note that $r'=r-1\geq 2$ holds. Our assumption, Claim~\ref{cl:triv}, Lemma~\ref{lem:cocircuit1} and Lemma~\ref{lem:cocircuit2} then imply the existence of an $F$-avoiding $\cX$-$\cY$ exchange sequence of width at most $2\cdot c\cdot (r'-1)+2\leq 2\cdot c\cdot (r-1)$ and length at most $c\cdot r'^{\,2}+2\cdot c\cdot (r'-1)+2\leq c\cdot r^2$.
\end{proof}

For $2$-sums, we get the following.

\begin{cor}\label{cor:numsteps2sum}
Let $\cX=(X_1,X_2)$ and $\cY=(Y_1,Y_2)$ be compatible pairs of disjoint bases of a matroid $M$ of rank $r\geq 3$ where $M$ contains no nontrivial tight set. Assume that for any minor $M'$ of $M$ and for any pair $\cX',\cY'$ of compatible pairs of disjoint bases of $M'$, there exists an $\cX'$-$\cY'$ exchange sequence in $M'$ of width at most $2\cdot c\cdot (r'-1)$ and length at most $c\cdot r'^{\,2}$, where $2\leq r'<r$ is the rank of $M'$ and $c\geq 1$. If $M$ is the 2-sum of two matroids, then there exists an $\cX$-$\cY$ exchange sequence in $M$ of width at most $2\cdot c\cdot (r-1)$ and length at most $c\cdot r^2$.
\end{cor}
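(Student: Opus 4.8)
The plan is to apply Lemma~\ref{lem:2sum} to split the instance along the 2-sum, solve the two subinstances by the inductive hypothesis, and then check that the resulting width and length bounds fit.

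First I would fix a 2-sum decomposition $M=M_\circ\oplus_2 M_\bullet$ along an element $t$, write $r_\circ\coloneqq r(M_\circ)$ and $r_\bullet\coloneqq r(M_\bullet)$, and recall that $r=r_\circ+r_\bullet-1$. The crucial preliminary step --- and, I expect, the only place where the ``no nontrivial tight set'' hypothesis is genuinely used --- is to show that $r_\circ,r_\bullet\geq 2$. Suppose $r_\circ=1$. As the bases $X_1,X_2$ cover $E$, the matroid $M$ is loopless; since $t$ is not a loop of $M_\circ$ and any loop of $M_\circ$ in $E_\circ-t$ would be a loop of $M$, the matroid $M_\circ$ is loopless as well, so the rank-$1$ matroid $M_\circ$ has all of its at least three elements pairwise parallel. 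Choosing $e,e'\in E_\circ-t$, the set $\{e,e'\}=\{e,t\}\triangle\{e',t\}$ is a cycle of $M_\circ$ by Lemma~\ref{lem:bin}, hence a cycle, and (again by looplessness) a circuit, of $M$; thus $\{e,e'\}$ is a tight set, nontrivial since $r\geq 3$, a contradiction. By the symmetry of the 2-sum, $r_\bullet\geq 2$ as well, and since $r_\circ+r_\bullet=r+1$ we get $2\leq r_\circ,r_\bullet\leq r-1$. Without this step the degenerate possibility $r_\circ=1$, which would force $r_\bullet=r$, would put one subinstance outside the scope of the inductive hypothesis.

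Next I would invoke Lemma~\ref{lem:2sum} with the subinstances $\cX',\cY'$ on $M_\circ$ and $\cX'',\cY''$ on $M_\bullet$. Since $M$ has no nontrivial tight set, $E_\circ-t=X_1^\circ\cup X_2^\circ=Y_1^\circ\cup Y_2^\circ$ is a nonempty proper subset of $E$ that is not tight, so for both $\cX$ and $\cY$ we are in the case where exactly one of the two $\circ$-parts lies in $\cB(M_\circ/t)$; consequently $\cX',\cY'$ really are pairs of disjoint bases of $M_\circ$ and $\cX'',\cY''$ are pairs of disjoint bases of $M_\bullet$, and a count of how often each element --- in particular $t$, which occurs exactly once in each of the four pairs --- shows that $\cX',\cY'$ are compatible and $\cX'',\cY''$ are compatible. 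As $M_\circ$ and $M_\bullet$ are isomorphic to minors of $M$ (a standard property of 2-sums) of ranks $r_\circ,r_\bullet\in\{2,\dots,r-1\}$, the inductive hypothesis provides an $\cX'$-$\cY'$ exchange sequence in $M_\circ$ of width $w'\leq 2c(r_\circ-1)$ and length $\ell'\leq c\,r_\circ^{\,2}$, and an $\cX''$-$\cY''$ exchange sequence in $M_\bullet$ of width $w''\leq 2c(r_\bullet-1)$ and length $\ell''\leq c\,r_\bullet^{\,2}$.

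Finally I would feed these back into Lemma~\ref{lem:2sum}, which yields an $\cX$-$\cY$ exchange sequence in $M$ of width at most $w'+w''$ and length at most $\ell'+\ell''$, and then verify the arithmetic. For the width, $w'+w''\leq 2c(r_\circ-1)+2c(r_\bullet-1)=2c(r_\circ+r_\bullet-2)=2c(r-1)$. For the length, $\ell'+\ell''\leq c(r_\circ^{\,2}+r_\bullet^{\,2})$, and since $(r_\circ+r_\bullet-1)^2-(r_\circ^{\,2}+r_\bullet^{\,2})=2(r_\circ-1)(r_\bullet-1)-1\geq 1$ for $r_\circ,r_\bullet\geq 2$, this is at most $c\,r^2$, completing the proof. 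Beyond the rank-bound argument above, the remaining work is routine bookkeeping to confirm that the hypotheses of Lemma~\ref{lem:2sum} and of the inductive assumption are met.
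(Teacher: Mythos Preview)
Your proposal is correct and follows essentially the same approach as the paper: apply Lemma~\ref{lem:2sum}, use that $M_\circ$ and $M_\bullet$ are minors of $M$ with ranks $r_\circ,r_\bullet\geq 2$ (the ``no nontrivial tight set'' hypothesis being used precisely here), invoke the inductive assumption, and combine via the identities $r=r_\circ+r_\bullet-1$ and $r_\circ^{\,2}+r_\bullet^{\,2}\leq r^2$. You are simply more explicit than the paper in spelling out why $r_\circ=1$ produces a tight two-element set and why the subinstances are genuine compatible pairs of disjoint bases; the paper compresses all of this into a couple of sentences.
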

\begin{proof}
If $M=M_\circ\oplus_2 M_\bullet$, then both $M_\circ$ and $M_\bullet$ are minors of $M$~\cite[Proposition~7.1.21]{oxley2011matroid}. Let $r'$ and $r''$ denote the ranks of $M_\circ$ and $M_\bullet$, respectively. Note that $r=r'+r''-1$ holds. 
Moreover, we claim that $r',r''\geq 2$. Indeed, e.g.\ $r' \ge 1$ and $|E_\circ|\ge 3$ hold by the definition of 2-sums, and $r'=1$ would imply that $M$ contains a nontrival tight set.
Our assumption and Lemma~\ref{lem:2sum} then imply the existence of an $\cX$-$\cY$ exchange sequence of width at most $2\cdot c\cdot(r'-1)+2\cdot c\cdot (r''-1) = 2\cdot c\cdot (r-1)$ and length at most $c\cdot r'^{\,2}+c \cdot r''^{\,2}\leq c\cdot r^2$.
\end{proof}

\section{Bounding the Number of Exchanges for Graphs}
\label{sec:graphic}

White's conjecture was settled for graphic matroids in~\cite{farber1985edge} for sequences of length two, and in~\cite{blasiak2008toric} for sequences of arbitrary length. Both results rely on the same algorithm, and in fact imply Gabow's conjecture as well for the graphic case. However, neither discusses the length of the resulting exchange sequence. 

The goal of this section is to prove strengthenings of Theorem~\ref{thm:regwhite} and Theorem~\ref{thm:reggabow} for graphs. Due to the fact that most of the work has already been done in Section~\ref{sec:reductions}, the proofs are simple and compact. Throughout the section, we use the fact that every minor of a graphic matroid is graphic again without explicitly mentioning it. 

\subsection{Quadratic Upper Bound}
\label{sec:readout}

We give the first polynomial bound on the exchange distance of compatible basis pairs in graphic matroids. In addition, through an analysis of the degree sequences of graphs that can be partitioned into two forests, we show how to exclude certain edges to participate in the exchange sequence. This observation plays a key role in the proof of Theorem~\ref{thm:regwhite}: when considering the 3-sum of a regular and a graphic matroid along a triad $T$, one can solve the two subproblems corresponding to the two sides of the 3-sum while restricting the usage of the elements of $T$, which in turn allows for an efficient merging of the sequences.

\begin{thm} \label{thm:graphicwhite} 
Let $\cX=(X_1, X_2)$ and $\cY=(Y_1, Y_2)$ be compatible pairs of bases of a graphic matroid $M$ of rank $r\geq 2$ where the underlying graph is $G=(V,E)$, and let $F \subseteq (X_1 \cap Y_1) \cup (X_2 \cap Y_2)$ be such that $|V(F)| \le 3$. Then, there exists an $F$-avoiding $\cX$-$\cY$ exchange sequence of width at most $2\cdot(r-1)$ and length at most $r^2$.
\end{thm}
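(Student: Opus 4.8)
The plan is to combine the reduction steps of Section~\ref{sec:reductions} with a single elementary observation about the degree sequence of a graph that decomposes into two forests. By Lemma~\ref{lem:disjoint}, together with the fact that edges contained in no basis cannot take part in any exchange, we may assume $X_1\cap X_2=Y_1\cap Y_2=\emptyset$ and $E=X_1\cup X_2=Y_1\cup Y_2$; thus $G$ (which we may assume has no isolated vertices) decomposes into the two spanning forests $X_1$ and $X_2$ and $|E|=2r$. We argue by strong induction on $r$. The case $r\le 2$ is handled directly by Claim~\ref{cl:triv}, which yields an $F$-avoiding $\cX$-$\cY$ exchange sequence of width at most $1\le 2(r-1)$ and length at most $r\le r^2$. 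For $r\ge 3$, every minor of $M$ is graphic and satisfies $|V(F\cap E')|\le|V(F)|\le 3$ (deletions and contractions never increase the number of vertices meeting a fixed edge set), so the inductive hypothesis is exactly the hypothesis of Corollary~\ref{cor:numstepstriad} with $c=1$. Hence it suffices to exhibit either a nontrivial tight set of $M$, or a triad $T\subseteq X_1\cup X_2$ with $T\cap F=\emptyset$.

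I would find such a set by working on $G$. If $G$ is disconnected, the edge set of any component is a nonempty proper subset of $E$ that again splits into two spanning forests, hence a nontrivial tight set, and we are done. So assume $G$ is connected. Since each edge of $G$ is absent from at least one of $X_1,X_2$, the graph $G$ is $2$-edge-connected; in particular it has no bridge, and every vertex, being spanned by two trees, has degree at least two. If some vertex $v$ has degree exactly two, then $\delta(v)$ is a $2$-element cocircuit of $M$, so $E-\delta(v)$ is a hyperplane with $|E-\delta(v)|=2r-2=2\,r_M(E-\delta(v))$, hence a nontrivial tight set. Otherwise all degrees are at least three; from $\sum_{v\in V}d_G(v)=2|E|=4|V|-4$ one gets that at most $|V|-4$ vertices can have degree at least four, so at least four vertices have degree exactly three. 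Since $|V(F)|\le 3<4$, we may pick a degree-three vertex $v\notin V(F)$; then no edge of $F$ is incident to $v$, so $\delta(v)\cap F=\emptyset$, and $\delta(v)$, being a $3$-element edge cut in a bridgeless graph, is a disjoint union of bonds each of size at least two, hence a single bond, i.e.\ a triad of $M$. Taking $T\coloneqq\delta(v)$ and invoking Corollary~\ref{cor:numstepstriad} (with $c=1$) completes the induction, with width at most $2(r-1)$ and length at most $r^2$.

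The delicate point --- and the reason the statement is phrased with $|V(F)|\le 3$ rather than a bound on $|F|$ --- is the final case: a single vertex of degree three may carry an edge of $F$ in its star, so we really need at least four degree-three vertices in order to avoid all of $V(F)$ while still producing a triad disjoint from $F$. Apart from this, the proof is bookkeeping: one checks that each reduction of Section~\ref{sec:reductions} keeps the instance within graphic matroids of strictly smaller rank, preserves disjointness of the two bases and the inclusion $F\subseteq(X_1\cap Y_1)\cup(X_2\cap Y_2)$ as well as $|V(F)|\le 3$, and that the width and length estimates telescope --- all of which is precisely what Corollary~\ref{cor:numstepstriad} packages. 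Note also that for graphic matroids the recursion can never stop at a $3$-connected matroid without cocircuits of size at most three: such a matroid would force minimum degree at least four in $G$, contradicting $\sum_{v\in V}d_G(v)=4|V|-4$; this is why a reduction always applies when $r\ge 3$.
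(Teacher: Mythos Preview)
Your proof is correct and follows essentially the same approach as the paper's: induction on the rank, reduction to disjoint bases spanning $E$, and then locating either a nontrivial tight set (from a degree-two vertex or a disconnected component) or a triad $\delta(v)$ disjoint from $F$ (from one of at least four degree-three vertices), after which Corollary~\ref{cor:numstepstriad} with $c=1$ finishes the step. Your treatment is in fact slightly more explicit than the paper's in separating the disconnected case and in justifying that $\delta(v)$ is a single bond, but the underlying argument is identical.
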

\begin{proof}
We prove the theorem by induction on the rank. For $r=2$, the statement holds by Claim~\ref{cl:triv}. Therefore, we consider the case $r\geq 3$, implying that $|V|\geq 4$. Since the elements not contained in any of the bases cannot participate in an $\cX$-$\cY$ exchange sequence, we may assume without loss of generality that $X_1\cup X_2 = Y_1\cup Y_2=E$. That is, $G$ is a graph whose edge set can be partitioned into two forests. Furthermore, by the induction hypothesis and Corollary~\ref{cor:numstepstriad}, we may assume that $X_1\cap X_2=Y_1\cap Y_2=\emptyset$. 
If there are isolated vertices in the graph, then those can be deleted without changing the problem. Since $E$ can be partitioned into two forests, we have $\sum_{v\in V}d(v)=2\cdot |E|\leq 4\cdot (|V|-1)$. This implies that $G$ contains a vertex $u$ of degree at most $3$. Since both forests are bases in the graphic matroid, those are maximal forests, implying that $d(u)\geq 2$. 

Assume first that $G$ has a vertex $u$ of degree $2$. Then $u$ is a leaf vertex in both $X_1$ and $X_2$, implying $r_M(E-\delta(u))=r_M(E)-1$. Since $|E|=2\cdot r_M(E)$, we get $|E-\delta(u)|=|E|-2=2\cdot r_M(E)-2=2\cdot r_M(E-\delta(u))$. That is, $\emptyset\neq E-\delta(u)\subsetneq E$ is a tight set, and the statement follows by the induction hypothesis and Corollary~\ref{cor:numstepstriad}.

If $G$ contains no vertex of degree $2$, then it has at least four vertices of degree $3$. By condition $|V(F)|\leq 3$, there exists a vertex $u$ such that $d(u)=3$ and $\delta(u)\cap F=\emptyset$. That is, $\delta(u)$ defines a triad of $M$ disjoint from $F$, and the statement follows by the induction hypothesis and Corollary~\ref{cor:numstepstriad}.
\end{proof}

\begin{rem}
The assumption of Theorem~\ref{thm:graphicwhite} on $|V(F)|\leq 3$ is tight in the sense that an $F$-avoiding $\cX$-$\cY$ exchange sequence might not exist even if $F$ consists of a pair of disjoint edges. For an example, let $\cX=(\{a,b,c\},\{d,e,f\})$ and $\cY=(\{a,e,c\},\{d,b,f\})$ be pairs of disjoint bases of the graphic matroid of a complete graph on four vertices, see Figure~\ref{fig:k4}. Note that any symmetric exchange between $\{a,b,c\}$ and $\{d,e,f\}$ uses at least one of $b$ and $e$. Therefore, for the choice $F=\{b,e\}$, there exists no $F$-avoiding $\cX$-$\cY$ exchange sequence.
\end{rem}

\begin{figure}[t!]
\centering
\begin{subfigure}[t]{0.47\textwidth}
\centering
\includegraphics[width=.3\linewidth]{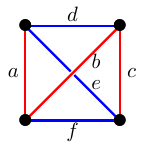}
\caption{The pair $\cX=(X_1,X_2)$ of disjoint bases of $M$.}
\label{fig:k4a}
\end{subfigure}\hfill
\begin{subfigure}[t]{0.47\textwidth}
\centering
\includegraphics[width=.3\linewidth]{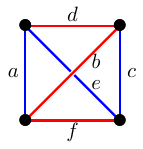}
\caption{The pair $\cY=(Y_1,Y_2)$ of disjoint bases of $M$.}
\label{fig:k4b}
\end{subfigure}
\caption{Example showing that Theorem~\ref{thm:graphicwhite} no longer holds if $F$ consists of a pair of disjoint edges. For the choice $F=\{b,e\}\subseteq (X_1\cap Y_1)\cup (X_2\cap Y_2)$, every $\cX$-$\cY$ exchange sequence uses at least one of $b$ and $e$.}
\label{fig:k4}
\end{figure}

\subsection{Strictly Monotone Sequences}
\label{sec:readout2}

Given compatible basis pairs $\cX,\cY$ of a matroid, an $\cX$-$\cY$ exchange sequence is called \emph{strictly monotone} if each step decreases the difference between the first members of the pairs. Using this terminology, Conjecture~\ref{conj:gabow} states that for any pair of disjoint bases $X_1,X_2$ of a matroid, there exists a strictly monotone exchange sequence between $(X_1,X_2)$ and $(X_2,X_1)$. 

\begin{thm} \label{thm:graphicgabow}
Let $X_1,X_2$ be disjoint bases of a graphic matroid $M$. Then, for any $h\in X_1\cup X_2$, there exists a sequence of symmetric exchanges that transforms $(X_1,X_2)$ into $(X_2,X_1)$, has length $r$ and exchanges $h$ in the last step. 
\end{thm}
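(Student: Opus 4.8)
\textbf{Proof plan for Theorem~\ref{thm:graphicgabow}.}
The plan is to prove the statement by induction on the rank $r$ of $M$, closely mirroring the structure of the proof of Theorem~\ref{thm:graphicwhite} but now tracking that the exchange sequence is \emph{strictly monotone} (so it has length exactly $r$) and that the \emph{last} step can be forced to use a prescribed element $h$. As in Theorem~\ref{thm:graphicwhite}, I may assume $X_1\cup X_2=E$, so $G=(V,E)$ is a bispanning graph; by the Tutte--Nash-Williams characterization recalled in the preliminaries, $|E|=2|V|-2$, hence $G$ has a vertex of degree $2$ or at least four vertices of degree $3$. The base case $r\le 2$ is handled by Claim~\ref{cl:triv} (which already produces a length-$r$ sequence whose last step uses any prescribed $h$), so assume $r\ge 3$.

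First I would dispose of nondisjoint intersections: here the bases $X_1,X_2$ are given disjoint, so this is not an issue, but I still need the reduction machinery in the disjoint setting. The key point is that the reductions of Section~\ref{sec:reductions} were deliberately stated so that the last step of the reduced sequence lifts to the last step of the original one. Concretely: if $G$ has a degree-$2$ vertex $u$, then $Z\coloneqq E-\delta(u)$ is a nontrivial tight set, and I apply the induction hypothesis to the two disjoint bases $X_1\cap Z$, $X_2\cap Z$ of $M|Z$ (rank $r-1$) together with the two disjoint bases $X_1-Z$, $X_2-Z$ of $M/Z$, which has rank $1$ and for which the single required exchange is trivial. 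By Lemma~\ref{lem:tight}, choosing the prescribed element $h$ to sit on the side that is exchanged last (put $h$ in $M/Z$ if the edge of $\delta(u)$ we want last is there, otherwise prescribe $h$ in $M|Z$ and do the one-step $M/Z$ part first — a small bookkeeping adjustment), I get a strictly monotone sequence of total length $(r-1)+1=r$ whose last step uses $h$. The strict monotonicity is preserved because concatenation of strictly monotone sequences is strictly monotone and the length is exactly the sum of the ranks.

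If $G$ has no degree-$2$ vertex, it has at least four vertices of degree $3$, so I can pick a degree-$3$ vertex $u$ with $h\notin\delta(u)$; then $T=\delta(u)$ is a triad of $M$. Since $|X_1\cap T|+|X_2\cap T|=3$, the pair $\cX=(X_1,X_2)$ is \emph{automatically consistent on $T$} with $\cY=(X_2,X_1)$ — indeed $X_1\cap T$ equals $X_2\cap T$ of the second pair by construction, so no preliminary exchanges à la Lemma~\ref{lem:cocircuit1} are needed. This is exactly what keeps the length at $r$ rather than $r+2$. Applying Lemma~\ref{lem:cocircuit2} to $M/t_2\backslash t_3$ (rank $r-1$), where $t_1,t_2\in X_1$, I invoke induction to get a strictly monotone sequence of length $r-1$ whose last step uses the prescribed $h\in E-(T\cup F)$ (here $F=\emptyset$). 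The only subtlety is that Lemma~\ref{lem:cocircuit2} in general \emph{lengthens} the sequence by the width $w$ — but in the Gabow/strictly-monotone setting the reduced sequence uses each element at most once, so the element $t_1$ is exchanged at most once, hence at most one extra step is added, giving total length $(r-1)+1=r$; moreover strict monotonicity is preserved because the at-most-one inserted pair of steps replaces a single step involving $t_1$ by two steps, each still decreasing $|Z_1\triangle Z_2|$ appropriately after we account for the reinsertion of the contracted/deleted triad elements. I should double-check that this replacement keeps the difference strictly decreasing at every step and does not merely keep it constant; this is the one place where the bookkeeping of Lemma~\ref{lem:cocircuit2} must be re-examined in the monotone regime.

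\textbf{Main obstacle.} The genuinely delicate part is the triad reduction: Lemma~\ref{lem:cocircuit2} was proved for general (non-monotone) exchange sequences and allows the length to grow by $w$, whereas for Theorem~\ref{thm:graphicgabow} I need the reinserted steps to (i) add at most one to the length and (ii) keep the sequence strictly monotone. Point (i) follows from width $\le 1$ of the inductive sequence, but point (ii) requires verifying that when a step exchanging $t_1$ for $f$ in $M/t_2\backslash t_3$ is expanded into the two steps ``exchange $t_1$ with $t_i$'' then ``exchange $t_j$ with $f$'' in $M$, both expanded pairs are still strictly closer to $(X_2,X_1)$ in the relevant symmetric-difference measure. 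I expect this to go through by the same circuit argument used in Claim~\ref{cl:two} and Lemma~\ref{lem:cocircuit2}, tracking that the triad elements end up on the correct sides, but it is the step that needs the most care and is the natural candidate to be spelled out in full in the actual proof.
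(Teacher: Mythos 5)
Your proof takes essentially the same route as the paper's (induction on rank, the degree-2/degree-3 dichotomy, Lemma~\ref{lem:tight} and Lemma~\ref{lem:cocircuit2}, plus the correct observation that $\cX=(X_1,X_2)$ and $\cY=(X_2,X_1)$ are automatically consistent on any triad so Lemma~\ref{lem:cocircuit1} is never needed), and it is correct. The strict-monotonicity worry you flag at the end dissolves for a cheaper reason than the pointwise verification you sketch: the inductive sequence in $M/t_2\backslash t_3$ has length $r-1$ and hence width $1$, so $t_1$ is exchanged exactly once and Lemma~\ref{lem:cocircuit2} inserts exactly one extra step, giving a lifted sequence of length at most $r$; since any $(X_1,X_2)$-$(X_2,X_1)$ sequence for disjoint bases trivially has length at least $r$, its length is exactly $r$, and a length-$r$ sequence is automatically strictly monotone. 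One small point you gloss over is that the last-step control in Lemma~\ref{lem:cocircuit2} applies only to $h\in E-(T\cup F)$, so you must indeed choose the degree-3 vertex $u$ with $h\notin\delta(u)$, which is possible since there are at least four such vertices and $h$ is a single edge; your tight-set bookkeeping (placing the $M|Z$ or $M/Z$ subsequence last according to where $h$ lives) is likewise the right fix.
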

\begin{proof}
Similarly to the proof of Theorem~\ref{thm:graphicgabow}, we can assume that $X_1\cup X_2=Y_1\cup Y_2=E$, and hence $G$ contains a vertex $u$ of degree at most $3$. Furthermore, if $u$ has degree $2$ then $E-\delta(u)$ is a nonempty proper tight set of $M$, while if $d(u)=3$ then $\delta(u)$ is a triad of $M$. Therefore, the statement follows by the induction hypothesis, Claim~\ref{cl:triv}, Lemma~\ref{lem:tight} and Lemma~\ref{lem:cocircuit2}.
\end{proof}

\begin{rem}
Theorem~\ref{thm:graphicgabow} settles a special case of a conjecture of Kotlar and Ziv that aims at extending the notion of serial symmetric exchanges to subsets of bases. Namely, suppose $X_1$ and $X_2$ are bases of a matroid $M$. Two subsets $A_1\subseteq X_1$ and $A_2\subset X_2$ are called \emph{serially exchangeable} if there exist orderings $A_1=\{a^1_1,\dots,a^1_q\}$ and $A_2=\{a^2_1,\dots,a^2_q\}$ such that $X_1-\{a^1_1,\dots,a^1_i\}+\{a^2_1,\dots,a^2_i\}$ and $X_2-\{a^2_1,\dots,a^2_i\}+\{a^1_1,\dots,a^1_i\}$ are bases for $i=1,\dots,q$. Kotlar and Ziv~\cite{kotlar2013serial} conjectured that for any $A\subseteq X_1$, there exists a set $B\subseteq X_2$ for which $A$ and $B$ are serially exchangeable. Note that this conjecture implies Gabow's conjecture.

As a relaxation, a matroid has the \emph{$k$-serial exchange property} for some positive integer $k$ if for any two bases $X_1,X_2$ and any subset $A_1\subseteq X_1$ of size $k$, there is a subset $A_2\subseteq X_2$ for which $A_1$ and $A_2$ are serially exchangeable. It was shown in~\cite{kotlar2013serial} that every matroid has the 2-serial exchange property. Kotlar~\cite{kotlar2013circuits} further verified that for matroids of rank at least three, for any two bases $X_1,X_2$ there exist $A_1\subseteq X_1$ and $A_2\subseteq X_2$ such that $|A_1|=|A_2|=3$ and $A_1$ and $A_2$ are serially exchangeable. Recently, McGuiness~\cite{mcguinness2022serial} showed that all binary matroids of rank at least three have the 3-serial exchange property. However, it is still unknown whether all matroids of rank at least three have the 3-exchange property.

Using this terminology, the statement of Theorem~\ref{thm:graphicgabow} is equivalent to the $(r-1)$-serial exchange property of graphic matroids.
\end{rem}

\section{Finding a Sequence of Exchanges in Polynomial Time}
\label{sec:reconf}

This section is dedicated to the proofs of Theorem~\ref{thm:regwhite} and Theorem~\ref{thm:reggabow}.  

\subsection{Preparations}
\label{sec:prep}

For proving the theorems, we need some preliminary observations. We first discuss the structure of bispanning graphs, and characterize their partitions into disjoint spanning trees in terms of intersections with a triangle. We then verify the theorems for the matroid $R_{10}$, and prove an analogous result to $F_7$ as well. Recall that the matroid $R_{10}$ is one of the basic building blocks of regular matroids, while $F_7$ is considered here to extend our results to max-flow min-cut matroids, see Section~\ref{sec:conc}. Finally, we show that it suffices to consider the problem for matroids that arise as the 3-sum of a regular matroid and the graphic matroid of a 4-regular graph.

\subsubsection{Partitions of Bispanning Graphs}
\label{sec:decomp}

Binary matroids have distinguished structural properties, which implies the following.

\begin{lem}\label{lem:0plusTchoose2} 
Let $T = \{t_1, t_2, t_3\}$ be a triangle of a binary matroid $M$ on ground set $E$. Then, the following are equivalent:
\begin{enumerate}[label=(\roman*)]\itemsep0em
\item $E - T$ partitions into a basis of $M$ and a basis of $M/T$, \label{it:decomp1}
\item $E-t_i$ partitions into two bases of $M$ for each $i \in \{1,2,3\}$,\label{it:decomp2}
\item $|E|=2\cdot r_M(E)+1$ and $|X| \le 2\cdot r_M(X)$ holds if $T \not \subseteq X\subseteq E$. \label{it:decomp3}
\end{enumerate}
\end{lem}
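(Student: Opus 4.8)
The plan is to prove the cycle of implications \ref{it:decomp1}$\Rightarrow$\ref{it:decomp2}$\Rightarrow$\ref{it:decomp3}$\Rightarrow$\ref{it:decomp1}, using the base characterizations of 1-sums and the binary matroid tools from Lemma~\ref{lem:bin} and Lemma~\ref{lem:0or2}. Throughout I would write $r=r_M(E)$.

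For \ref{it:decomp1}$\Rightarrow$\ref{it:decomp2}: suppose $E-T = B \cup B''$ where $B \in \cB(M)$ and $B'' \in \cB(M/T)$. Then $|B|=r$ and $|B''|=r_M(E)-r_M(T)=r-2$, so $|E-T|=2r-2$ and hence $|E|=2r+1$. Since $B''$ is a basis of $M/T$, for every choice of two distinct elements $t_i,t_j$ the set $B''+\{t_i,t_j\}$ spans $M$ and has size $r$, hence is a basis of $M$ (here I use that $T$ is a triangle, so any two of its elements together span the third, and independence of $B''+\{t_i,t_j\}$ follows from the definition of contraction since $\{t_i,t_j\}$ is independent in $M|T$). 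So $E-t_k = B \cup (B''+\{t_i,t_j\})$ is a partition into two bases of $M$ for each $k$, giving \ref{it:decomp2}.

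For \ref{it:decomp2}$\Rightarrow$\ref{it:decomp3}: from $E-t_1$ partitioning into two bases we get $|E|-1=2r$, so $|E|=2r+1$. For the inequality, suppose $T\not\subseteq X\subseteq E$; say $t_1\notin X$. Fix a partition $E-t_1 = B_1\cup B_2$ into two bases of $M$. Then $|X| = |X\cap B_1| + |X\cap B_2| \le r_M(X) + r_M(X) = 2r_M(X)$, since the intersection of $X$ with a basis is an independent subset of $X$. This gives \ref{it:decomp3}.

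For \ref{it:decomp3}$\Rightarrow$\ref{it:decomp1}: this is the substantive direction and where I expect the main obstacle. Consider the matroid $M$ and the matroid $M/T$; I want to show $E-T$ is independent in the matroid union $M + (M/T)$ of rank $r + (r-2) = 2r-2 = |E-T|$, i.e.\ that $E-T$ is a common "doubling" that splits as required. By the Edmonds–Fulkerson formula quoted in the preliminaries, $E-T$ is independent in $M+(M/T)$ iff $r_M(Y) + r_{M/T}(Y) \ge |Y|$ for every $Y\subseteq E-T$. Now $r_{M/T}(Y) = r_M(Y\cup T) - r_M(T) = r_M(Y\cup T) - 2$. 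If $r_M(Y\cup T) \ge |Y|-r_M(Y)+2$ we are done; equivalently I need $r_M(Y) + r_M(Y\cup T) \ge |Y| + 2$. Apply the hypothesis \ref{it:decomp3} to the set $X = Y\cup T$: since $T\subseteq X$, the hypothesis gives no information directly, so instead I apply it to sets $X$ not containing all of $T$. The clean route is: for $Y\subseteq E-T$, the set $X = Y \cup \{t_1,t_2\}$ satisfies $T\not\subseteq X$, so $|Y|+2 = |X| \le 2 r_M(X) \le 2 r_M(Y\cup T)$, hence $r_M(Y\cup T)\ge |Y|/2 + 1$; combined with $r_M(Y)\ge \lceil |Y|/2\rceil$ coming from applying \ref{it:decomp3} to $Y$ itself (if $T\not\subseteq Y$, which holds as $Y\subseteq E-T$) one gets $r_M(Y)+r_{M/T}(Y) = r_M(Y) + r_M(Y\cup T) - 2 \ge |Y|$. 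Thus $E-T$ is independent in $M+(M/T)$ and, having the right size, partitions into a basis of $M$ and a basis of $M/T$, which is \ref{it:decomp1}. The delicate point to get right is verifying $r_M(Y)\ge \lceil|Y|/2\rceil$ and that the two applications of \ref{it:decomp3} combine with correct parity/rounding; I would double-check the case $|Y|$ odd, where $r_M(Y)\ge (|Y|+1)/2$ is needed and indeed follows since $2r_M(Y)\ge|Y|$ forces $r_M(Y)\ge\lceil |Y|/2\rceil$.
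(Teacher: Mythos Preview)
Your arguments for \ref{it:decomp1}$\Rightarrow$\ref{it:decomp2} and \ref{it:decomp2}$\Rightarrow$\ref{it:decomp3} are correct and essentially identical to the paper's.

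The step \ref{it:decomp3}$\Rightarrow$\ref{it:decomp1} has a genuine gap. You need $r_M(Y)+r_M(Y\cup T)\ge |Y|+2$ for every $Y\subseteq E-T$. Your two applications of \ref{it:decomp3} give $r_M(Y)\ge \lceil |Y|/2\rceil$ and $r_M(Y\cup T)\ge |Y|/2+1$; when $|Y|$ is even, say $|Y|=2k$, these only yield $r_M(Y)+r_M(Y\cup T)\ge k+(k+1)=|Y|+1$, which is one short. So the problematic case is $|Y|$ \emph{even}, not the odd case you flagged. Moreover, this shortfall cannot be repaired without invoking that $M$ is binary: the matroid in the remark following the lemma (ground set $\{e_1,e_2,t_1,t_2,t_3\}$ with $e_1,e_2$ parallel and $M\backslash e_1\cong U_{2,4}$) satisfies \ref{it:decomp3} but not \ref{it:decomp1}, and for $Y=\{e_1,e_2\}$ one has exactly $r_M(Y)+r_M(Y\cup T)=1+2=|Y|+1$.

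The paper closes this gap by a case split on $r_M(Y\cup T)-r_M(Y)\in\{0,1,2\}$. The cases $0$ and $2$ follow from \ref{it:decomp3} applied to $Y\cup\{t_1,t_2\}$ and to $Y$ alone, respectively, much as you do. The delicate case is $r_M(Y\cup T)=r_M(Y)+1$: here binarity is used to show that $Y$ already spans some $t_i\in T$ (if $Y$ spans neither $t_1$ nor $t_2$, there is a circuit $C\subseteq Y+t_1+t_2$ through both, and then $C\triangle T\subseteq Y+t_3$ is a cycle containing $t_3$ by Lemma~\ref{lem:bin}). Then one applies \ref{it:decomp3} to $Y+t_i$, obtaining $2r_M(Y)=2r_M(Y+t_i)\ge |Y|+1$, hence $r_M(Y)+r_M(Y\cup T)=2r_M(Y)+1\ge |Y|+2$.
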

\begin{proof}
Condition \ref{it:decomp1} implies \ref{it:decomp2}, since if $E - T = B \cup B'$ is a partition such that $B \in \cB(M)$ and $B'\in \cB(M/T)$, then $E-t_i = B \cup (B'+T-t_i)$ is a partition into two bases of $M$ for $i \in \{1,2,3\}$. 

Condition \ref{it:decomp2} is equivalent to \ref{it:decomp3}, since $E-t_i$ partitions into two bases of $M$ if and only if $|E-t_i| = 2\cdot r_M(E)$ and $|X| \le 2\cdot r_M(X)$ holds for $X \subseteq E-t_i$. 

It remains to show that \ref{it:decomp3} implies \ref{it:decomp1}. Since $r_M(E)+r_{M/T}(E)=2\cdot r_M(E)-2 = |E|-3 = |E - T|$, it is enough to show that $E - T$ is independent in the sum of the matroids $M$ and $M/T$, which is equivalent to $r_M(X)+r_{M/T}(X) \ge |X|$ for every $X \subseteq E- T$. Since $r_{M/T}(X) = r_M(X\cup T)-r_M(T) = r_M(X\cup T)-2$, it suffices to show that \[r_M(X) + r_M(X\cup T) \ge |X|+2 \text{ for }X \subseteq E- T.\]
If $r_M(X\cup T) = r_M(X)$, then $r_M(X)+r_M(X\cup T)= 2\cdot r_M(X+t_1+t_2) \ge |X+t_1+t_2|=|X|+2$. If $r_M(X\cup T)=r_M(X)+2$, then the desired inequality follows from $2\cdot r_M(X) \ge |X|$. It remains to consider the case $r_M(X\cup T)=r_M(X)+1$. Then, $M$ being binary implies that $X$ spans at least one of $t_1$, $t_2$ and $t_3$. Indeed, if $X$ does not span $t_1$ or $t_2$, then $r_M(X+t_1+t_2)=r_M(X)+1$ implies that there is a circuit $C \subseteq X+t_1+t_2$ containing both $t_1$ and $t_2$, thus $C \triangle T \subseteq X+t_3$ is a cycle containing $t_3$, hence $X$ spans $t_3$. If $X$ spans $t_i$, then \[r_M(X)+r_M(X\cup T) = 2\cdot r_M(X)+1 = 2\cdot r_M(X+t_i)+1 \ge |X+t_i|+1 = |X|+2,\] 
concluding the proof of the lemma.
\end{proof}

\begin{rem}
We note that \ref{it:decomp2} does not necessarily imply \ref{it:decomp1} for nonbinary matroids. For example, consider the matroid on ground set $\{e_1,e_2,t_1,t_2,t_3\}$ in which $e_1$ and $e_2$ are parallel and the matroid obtained by deleting $e_1$ is the rank-2 uniform matroid. Then, $\{e_1,t_j\},\{e_2,t_k\}$ is a partition of $E-t_i$ into two bases of $M$ for any choice of indices satisfying $\{i,j,k\}=\{1,2,3\}$. However, $E-T=\{e_1,e_2\}$ consists of parallel elements in $M$ and of loops in $M/T$, hence it can not be decomposed into a basis of $M$ and a basis of $M/T$.
\end{rem}

We will use the following corollary of the lemma for graphic matroids.

\begin{cor} \label{cor:graphTchoose2} 
Let $G=(V,E)$ be a graph and $T=\{t_1,t_2,t_3\}$ be a triangle of $G$. Then, there exists a partition $E- T = F_1 \cup F_2$ such that $F_1+\{t_1,t_2\}$ and $F_2$ are disjoint spanning trees of $G$ if and only if $|E|=2\cdot |V|-1$ and 
\[|E[U]| \le \begin{cases} 2\cdot |U|-2 & \text{if } V(T) \not \subseteq U, \\
2\cdot |U|-1 & \text{if } V(T) \subseteq U
\end{cases}\]
holds for each $\emptyset\neq U \subseteq V$. 
\end{cor}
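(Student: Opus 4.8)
The plan is to deduce the corollary directly from Lemma~\ref{lem:0plusTchoose2} applied to the graphic matroid $M=M(G)$, using the rank formula $r_M(X)=|V|-c(V,X)$, where $c(V,X)$ denotes the number of connected components of the spanning subgraph $(V,X)$. Under this dictionary a basis of $M$ is a spanning tree of $G$ whenever $G$ is connected, and since $T$ is a circuit of $M$, a set $F_1\subseteq E-T$ is a basis of $M/T$ exactly when $F_1+\{t_i,t_j\}$ is a spanning tree of $G$ for one, equivalently every, pair $t_i,t_j$. Hence the existence of a partition $E-T=F_1\cup F_2$ with $F_1+\{t_1,t_2\}$ and $F_2$ spanning trees is precisely condition \ref{it:decomp1} of the lemma (with $F_2$ the basis of $M$ and $F_1$ the basis of $M/T$), and what remains is to check that the displayed conditions of the corollary are the translation of condition \ref{it:decomp3}.

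First I would record that each side of the claimed equivalence forces $G$ to be connected, after which $r_M(E)=|V|-1$ and ``$|E|=2|V|-1$'' coincides with ``$|E|=2r_M(E)+1$''. For the left-hand side this is immediate, since a spanning tree exists only in a connected graph. For the right-hand side it follows by summing the subset bound over the vertex classes of the components of $G$: the triangle $T$ lies in one component, whose vertex class $U$ gives $|E[U]|\le 2|U|-1$, while every other class $U'$ gives $|E[U']|\le 2|U'|-2$, and these add up to contradict $|E|=2|V|-1$ as soon as there are at least two components.

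The heart of the matter, and the step I expect to be the most delicate, is the equivalence of the two families of cardinality constraints once $G$ is connected: on one side, ``$|E[U]|\le 2|U|-2$ for $V(T)\not\subseteq U$ and $|E[U]|\le 2|U|-1$ for $V(T)\subseteq U$, over all $\emptyset\neq U\subseteq V$''; on the other, ``$|X|\le 2r_M(X)$ for every $X\subseteq E$ with $T\not\subseteq X$''. To get the vertex bounds from the edge bounds, for $U$ with $V(T)\not\subseteq U$ I would apply the latter to $X=E[U]$ (which omits some edge of $T$) together with $r_M(E[U])\le|U|-1$; for $U$ with $V(T)\subseteq U$ I would instead take $X=E[U]-t_k$ for a triangle edge $t_k$ and use $r_M(E[U]-t_k)=r_M(E[U])$, since $t_k$ is spanned by the two remaining triangle edges. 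Conversely, for a general $X$ with $T\not\subseteq X$ I would split $X$ along the components of $(V,X)$: a component whose vertex class $V_i$ does not contain all of $V(T)$ contributes at most $|E[V_i]|\le 2|V_i|-2$ edges, and the at most one component whose class $V_{i_0}$ contains $V(T)$ also contributes at most $2|V_{i_0}|-2$, via $|X\cap E[V_{i_0}]|+1\le|(X\cap E[V_{i_0}])\cup T|\le|E[V_{i_0}]|\le 2|V_{i_0}|-1$, where the first step uses $T\not\subseteq X$; summing and invoking $r_M(X)=|V|-c(V,X)$ yields $|X|\le 2r_M(X)$.

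The only genuinely load-bearing point is this last shaving of a unit for the triangle-containing component: a component-by-component estimate that ignores the hypothesis $T\not\subseteq X$ only gives $|X|\le 2r_M(X)+1$, so one must exploit $T\not\subseteq X$ to absorb the surplus. With both equivalences established, the equivalence of conditions \ref{it:decomp1} and \ref{it:decomp3} in Lemma~\ref{lem:0plusTchoose2} closes the proof.
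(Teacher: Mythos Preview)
Your proof is correct, but it takes a different route from the paper. The paper applies the equivalence \ref{it:decomp1}\,$\Leftrightarrow$\,\ref{it:decomp2} of Lemma~\ref{lem:0plusTchoose2}: condition \ref{it:decomp2} asks that each of $G-t_1$, $G-t_2$, $G-t_3$ be bispanning, which by Tutte--Nash-Williams is equivalent to $|E|=2|V|-1$ together with $|(E-t_i)[U]|\le 2|U|-2$ for all nonempty $U$ and each $i$; intersecting over $i$ immediately yields the displayed bounds. You instead go through \ref{it:decomp1}\,$\Leftrightarrow$\,\ref{it:decomp3} and translate the edge-set condition $|X|\le 2r_M(X)$ for $T\not\subseteq X$ into the vertex-set bounds via the graphic rank formula, which is sound but requires the component decomposition and the ``shaving of a unit'' step you flag. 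The paper's route is shorter because \ref{it:decomp2} is already phrased per deleted triangle edge and so plugs directly into the spanning-tree characterization; your route is more self-contained in that it avoids invoking Tutte--Nash-Williams explicitly (though of course that result is essentially the graphic case of the Edmonds--Fulkerson rank formula underlying \ref{it:decomp3}).
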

\begin{proof}
Each of the graphs $G-t_1$, $G-t_2$, $G-t_3$ decompose into two spanning trees if and only if $|E|=2\cdot |V|-1$ and any nonempty subset $U \subseteq V$ spans at most $2\cdot |U|-2$ edges in each of them. The latter condition is equivalent to $|E[U]| \le 2\cdot |U|-2$ if $V(T) \not \subseteq U$, and to $|E[U]| \le 2\cdot |U|-1$ if $V(T) \subseteq U$.  Therefore, the statement follows from the equivalence of Lemma~\ref{lem:0plusTchoose2}\ref{it:decomp1} and Lemma~\ref{lem:0plusTchoose2}\ref{it:decomp2} applied to the graphic matroid of $G$. 
\end{proof}

\begin{figure}[t!]
\centering
\begin{subfigure}[t]{0.23\textwidth}
\centering
\includegraphics[width=.75\linewidth]{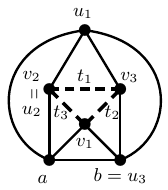}
\caption{
Graph $G=(V,E)$ and triangle $T=\{t_1,t_2,t_3\}$.
}
\label{fig:graph1}
\end{subfigure}\hfill
\begin{subfigure}[t]{0.23\textwidth}
\centering
\includegraphics[width=.75\linewidth]{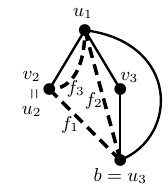}
\caption{
Graph $G'=(V',E')$ with new edges $f_1,f_2,f_3$.
}
\label{fig:graph2}
\end{subfigure}\hfill
\begin{subfigure}[t]{0.23\textwidth}
\centering
\includegraphics[width=.75\linewidth]{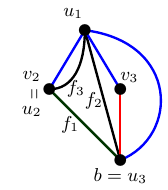}
\caption{
Partition into $F'_1$ (red) and $F'_2$ (blue).
}
\label{fig:graph3}
\end{subfigure}\hfill
\begin{subfigure}[t]{0.23\textwidth}
\centering
\includegraphics[width=.75\linewidth]{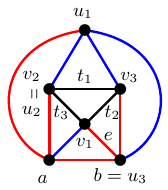}
\caption{
Construction of $F_1$ (red) and $F_2$ (blue).
}
\label{fig:graph4}
\end{subfigure}
\caption{Illustration of Lemma~\ref{lem:4reg}.}
\label{fig:graph}
\end{figure}

The proof of Theorem~\ref{thm:regwhite} will rely on the following lemma.

\begin{lem} \label{lem:4reg}
Let $G=(V,E)$ be a simple 4-regular graph and $T = \{t_1, t_2, t_3\}$ be a triangle of $G$. Assume that $|(E - T)[U]| \le 2\cdot |U|-3$ holds for each $U \subseteq V$ with $|U|\ge 2$. Then, there exists a partition $E- T = F_1 \cup F_2$ and an edge $e \in F_1$ such that each of $F_1$, $F_2+t_2$, $F_2+t_3$, $F_1 - e + t_2$, $F_1 - e + t_3$ and $F_2 + e$ is a spanning tree of $G$.
\end{lem}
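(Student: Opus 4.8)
The plan is to reduce the statement to Corollary~\ref{cor:graphTchoose2} by an auxiliary construction that turns the ``two spanning trees plus a flexible edge'' requirement into a bispanning-type condition on a slightly larger graph. First I would form the graph $G'=(V',E')$ by adding a new vertex $v$ together with three new edges $f_1,f_2,f_3$, where $f_i$ joins $v$ to the vertex of $T$ not incident to $t_i$ (equivalently, $\{f_1,f_2,f_3\}$ together with $T$ forms a copy of $K_4$ on $V(T)\cup\{v\}$); cf.\ Figure~\ref{fig:graph}. Since $G$ is simple and 4-regular, $|E|=2|V|$, so $|E'|=|E-T|+3+3=(2|V|-3)+6=2|V|+3=2|V'|+1$, and $v$ has degree $3$. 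The goal is to verify the hypotheses of Corollary~\ref{cor:graphTchoose2} for $G'$ with the triangle $T'=\{f_1,f_2,f_3\}$, so that $E'-T'=(E-T)\cup\{t_1,t_2,t_3\}$ partitions into disjoint spanning trees $S_1,S_2$ of $G'$ with (say) $S_1$ of the form $F'_1+\{f_1,f_2\}$. Then deleting $v$ from $G'$ and tracking which of the edges $t_j$ and $f_j$ end up where will produce the desired $F_1,F_2$ and the edge $e$.

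The main technical step is checking the edge-count condition of Corollary~\ref{cor:graphTchoose2}: for every $\emptyset\neq U'\subseteq V'$, we need $|E'[U']|\le 2|U'|-2$ when $V(T')\not\subseteq U'$ and $|E'[U']|\le 2|U'|-1$ when $V(T')\subseteq U'$. Here $V(T')=V(T)$ (the three original endpoints), so the two cases are distinguished by whether $U'\supseteq V(T)$. I would split into the cases $v\notin U'$ and $v\in U'$. If $v\notin U'$, then $E'[U']=E[U']$ and, writing $U=U'$, the hypothesis $|(E-T)[U]|\le 2|U|-3$ together with $|T[U]|\le 3$ (and $=3$ only when $V(T)\subseteq U$) gives $|E[U]|\le 2|U|$ in general, but one must be more careful: when $V(T)\not\subseteq U$ at most two edges of $T$ lie in $U$, giving $|E[U]|\le 2|U|-1$, which is not quite $\le 2|U|-2$. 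The resolution is that $|(E-T)[U]|\le 2|U|-3$ is a genuinely strong bound; combined with a parity/structural argument (e.g.\ $(E-T)[U]$ and $T[U]$ are edge-disjoint and $G$ is simple 4-regular, so low-order cases can be tightened), or by observing that when exactly two edges of $T$ meet $U$ we also lose an endpoint outside $U$. I would instead argue directly: if $V(T)\not\subseteq U'$ then $|T[U']|\le 1$, since $T$ is a triangle and omitting one of its three vertices leaves at most one of its edges; hence $|E[U']|\le |(E-T)[U']|+1\le 2|U'|-2$. If $v\notin U'$ and $V(T)\subseteq U'$, then $|T[U']|=3$ and $|E'[U']|=|E[U']|\le(2|U'|-3)+3=2|U'|$ — too large by one — so here I must also use that $f_1,f_2,f_3\notin E'[U']$ is automatic and re-examine: actually the bound needed is $\le 2|U'|-1$, so I need $|(E-T)[U']|\le 2|U'|-4$ whenever $V(T)\subseteq U'$; this should follow because a set containing all of $V(T)$ has $|U'|\ge 3$ and the $4$-regularity forces enough edges leaving $U'$. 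This counting is the crux and the step I expect to be the main obstacle.

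The case $v\in U'$ is easier: write $U'=U\cup\{v\}$ with $U\subseteq V$. The edges of $E'[U']$ are those of $E[U]$ plus the edges $f_i$ with both endpoints in $U'$, i.e.\ with the $V(T)$-endpoint in $U$. If $V(T')=V(T)\subseteq U'$ then $V(T)\subseteq U$, all three $f_i$ are present, and $|E'[U']|=|E[U]|+3\le(2|U|+ \text{something})+3$; using $|(E-T)[U]|\le 2|U|-3$ and $|T[U]|=3$ gives $|E[U]|\le 2|U|$, so $|E'[U']|\le 2|U|+3=2|U'|+1$ — again off by two, which signals that I should instead bound $|(E-T)[U]|$ by $2|U|-3$ and note $|T[U]|+|\{f_i\text{ in }U'\}|$ cannot both be maximal; in fact if $V(T)\subseteq U$ then by $4$-regularity and simplicity the six edges among $V(T)\cup\{v\}$ are exactly $T\cup T'$, contributing $6=2\cdot 4-2=2|V(T)\cup\{v\}|-2$, consistent with bispanning, and for larger $U'$ the slack in $|(E-T)[U]|\le 2|U|-3$ absorbs the rest. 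If $V(T)\not\subseteq U'$, then at most two $f_i$ have their $V(T)$-endpoint in $U$ and at most one edge of $T$ lies in $U$, so $|E'[U']|\le |(E-T)[U]|+1+2\le 2|U|=2|U'|-2$, as required. Once the hypotheses of Corollary~\ref{cor:graphTchoose2} are verified, I obtain the partition $E'-T'=S_1\cup S_2$ with $S_1\supseteq\{f_1,f_2\}$; since $v$ has degree $3$ in $G'$ with incident edges $f_1,f_2,f_3$, tree $S_2$ contains exactly one of them, namely $f_3$, and $S_1$ contains $f_1,f_2$, so in $G=G'-v$ the sets $S_1-\{f_1,f_2\}$ and $S_2-f_3$ are a spanning forest of $G$ on $V$ missing the constraints at $V(T)$ in a controlled way; matching $f_i\leftrightarrow t_i$ under the $K_4$ identification converts this into the six spanning-tree conditions on $F_1,F_2,t_2,t_3,e$, where $e$ is the unique edge of $S_1$ incident to the $V(T)$-vertex that $f_3$ (hence $t_3$) would attach to. I would finish by a short direct check that this $e$ and this partition satisfy all six required spanning-tree properties, reading them off from the two trees $S_1,S_2$ of $G'$.
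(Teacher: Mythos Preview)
Your construction does not work, and the repeated ``off by one'' and ``off by two'' mismatches you flag are symptoms of a genuine error, not of sloppy counting.

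First, the three edges $f_1,f_2,f_3$ you add all meet at the new vertex $v$; they form a star, not a triangle. So Corollary~\ref{cor:graphTchoose2} cannot be applied to $G'$ with $T'=\{f_1,f_2,f_3\}$ at all. Relatedly, your claim $V(T')=V(T)$ is false: $V(T')=V(T)\cup\{v\}$. Second, and independently, you compute $|E'|=2|V'|+1$, whereas the corollary requires $|E'|=2|V'|-1$; the cardinality hypothesis is violated before any sparsity bound is checked. These two issues explain why every case of your verification comes out too large. Finally, even if you had two spanning trees of $G'$, your sketch of how to extract $F_1,F_2$ and the edge $e$ from them is not a definition: you would need $e\in F_1$ with the very specific property that swapping $e$ for either $t_2$ or $t_3$ keeps $F_1$ spanning and that $F_2+e$ is spanning, and nothing in your setup singles out such an edge.

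The paper goes in the opposite direction: it \emph{removes} two vertices rather than adding one. With $t_1=v_2v_3$, $t_2=v_1v_3$, $t_3=v_1v_2$, let $a,b$ be the two neighbours of $v_1$ outside $V(T)$ and $u_1,u_2,u_3$ the neighbours of $a$ other than $v_1$. Form $G'=G-\{v_1,a\}-t_1+\{f_1,f_2,f_3\}$ where $f_i=u_{i+1}u_{i+2}$ is a genuine triangle on $\{u_1,u_2,u_3\}$. Now $|E'|=2|V'|-1$, and the sparsity hypothesis of Corollary~\ref{cor:graphTchoose2} follows from $|(E-T)[U]|\le 2|U|-3$ applied to $U$ or to $U+a$. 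The corollary gives a partition $E'-\{f_1,f_2,f_3\}=F_1'\cup F_2'$ with $F_1'+\{f_2,f_3\}$ and $F_2'$ spanning trees of $G'$. Setting $F_1\coloneqq F_1'+\{au_1,au_2,au_3,v_1b\}$, $F_2\coloneqq F_2'+v_1a$, and $e\coloneqq v_1b$ makes all six spanning-tree conditions immediate: $F_1-e$ is a spanning tree of $G-v_1$, and $F_2$ is a two-component forest whose component containing $v_1$ is the single edge $v_1a$. The explicit choice of $e$ is the point you were missing; it comes from deleting rather than adding structure.
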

\begin{proof}
Let $v_1$, $v_2$ and $v_3$ denote the vertices of $T$ such that $t_1=v_2v_3$, $t_2=v_1v_3$ and $t_3=v_1v_2$. We denote by $a$ and $b$ the neighbours of $v_1$ distinct from $v_2$ and $v_3$, and by $u_1$, $u_2$ and $u_3$ the neighbours of $a$ distinct from $v_1$. 
Let $f_i$ be a new edge between vertices $u_{i+1}u_{i+2}$, where indices are meant in a cyclic order. 
Let $G'=(V',E')$ denote the graph $G-\{v_1,a\}-t_1+\{f_1,f_2,f_3\}$.
Note that $|E'| = |E|-5 = 2\cdot |V|-5 = 2\cdot |V'|-1$. 
Consider a subset $U \subseteq V'$ with $|U| \ge 2$. If $\{u_1, u_2, u_3\} \not \subseteq U$, then $|E'[U]| \le |(E- T)[U]|+1 \le 2\cdot |U|-2$, while if $\{u_1, u_2, u_3\} \subseteq U$, then $|E'[U]| = |(E - T)[U+a]| \le 2\cdot |U+a|-3 = 2\cdot |U|-1$ holds. This shows that the conditions of 
Corollary~\ref{cor:graphTchoose2} are satisfied, hence there exists a partition $E'-\{f_1, f_2, f_3\} = F'_1 \cup F'_2$ such that  $F'_1+\{f_2,f_3\}$ and $F'_2$ are spanning trees of $G'$.  Let $F_1 \coloneqq F'_1 + \{au_1,au_2,au_3,v_1b\}$, $F_2\coloneqq F'_2+v_1a$, $e\coloneqq v_1b$. Then $F_1-e$ is a spanning tree of $G-v_1$, hence $F_1$, $F_1-e+t_2$ and $F_1-e+t_3$ are spanning trees of $G$. Since $F_2$ is a forest such that $v_1a$ is one of its two components, we get that $F_2+t_3$, $F_2+t_2$ and $F_2+e$ are also spanning trees of $G$. 
\end{proof}

\subsubsection{Solving the Problem for \texorpdfstring{$R_{10}$}{R10} and \texorpdfstring{$F_7$}{F7}}
\label{subsubsec:f7r10}

We now verify Theorem~\ref{thm:regwhite} and Theorem~\ref{thm:reggabow} for the matroid $R_{10}$. Recall that $R_{10}$ is the binary matroid represented by a matrix $A\in GF(2)^{5 \times 10}$ in which the columns are different and each of them contains exactly two zero entries.

\begin{prop} \label{prop:r10}
$R_{10}$ satisfies Theorem~\ref{thm:regwhite} and Theorem~\ref{thm:reggabow}. 
\end{prop}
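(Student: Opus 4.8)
The plan is to verify Theorem~\ref{thm:regwhite} and Theorem~\ref{thm:reggabow} for $R_{10}$ directly, exploiting the strong symmetry of this matroid. The first step is to understand $\cB(R_{10})$ concretely: with the representation where columns of $A$ are the ten weight-three vectors in $GF(2)^5$, every $5$-element subset of the ground set is either a basis or not, and one checks that $R_{10}$ has rank $5$ with $n=10$ elements, so a pair of disjoint bases is simply a partition of the ground set into two bases. Since $2r = 10 = n$, there are no nontrivial tight sets, and $R_{10}$ is known to be $3$-connected with every circuit and cocircuit of size exactly $4$; in particular it has no triad, so none of the reduction machinery of Section~\ref{sec:reductions} applies and the statement must be checked ``by hand''. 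The key structural fact I would use is that the automorphism group of $R_{10}$ acts transitively (indeed highly transitively) on the relevant objects — the stabilizer structure is large enough that, up to isomorphism, there are only very few pairs $(\cX,\cY)$ of compatible disjoint basis pairs to consider.

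For Theorem~\ref{thm:reggabow}, I would argue as follows. Given disjoint bases $X_1,X_2$ with $X_1\cup X_2=E$, I need a strictly monotone exchange sequence of length $r=5$ from $(X_1,X_2)$ to $(X_2,X_1)$; equivalently, orderings of $X_1$ and $X_2$ witnessing the serial symmetric exchange property. The even-cycle-matroid description of $R_{10}$ (Figure~\ref{fig:r10graph}) is convenient here: $R_{10}$ is the even-cycle matroid of a small signed graph, and a partition into two bases corresponds to a concrete combinatorial object in that graph, for which one can exhibit the five exchanges explicitly. Alternatively, since McGuinness~\cite{mcguinness2014base} showed every pair of bases in a regular matroid has the unique exchange property, one can build the sequence greedily: repeatedly perform the forced unique symmetric exchange that moves one element from $X_1$'s side to $X_2$'s side, checking that after $5$ steps the pair has been fully inverted. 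The content to verify is that the greedy/forced choices never get stuck and terminate in exactly $r$ steps; because of transitivity of $\mathrm{Aut}(R_{10})$ this reduces to checking one or two explicit partitions, which is a finite computation.

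For Theorem~\ref{thm:regwhite}, I need: for any two compatible disjoint basis pairs $\cX=(X_1,X_2)$, $\cY=(Y_1,Y_2)$ of $R_{10}$, an $\cX$-$\cY$ exchange sequence of length at most $2r^2 = 50$ and width at most $4(r-1)=16$. These bounds are extremely generous for a $10$-element matroid, so the real task is just reachability: showing $\cX$ and $\cY$ lie in the same component of the (length-$2$) exchange graph. I would first move $\cX$ to the ``canonical'' pair $(X_1,X_2)$ and then to $(X_2,X_1)$ using Theorem~\ref{thm:reggabow} ($5$ steps), and do the same from $\cY$; it then remains to connect two pairs that have the same underlying partition up to swapping, which reduces everything to: any two partitions of $E$ into two bases are connected by short exchange sequences. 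Using transitivity of the automorphism group on such partitions (there are only a handful of orbits — in fact I expect a single orbit of unordered partitions, or at most two), one picks a spanning set of ``neighbour'' partitions and exhibits one symmetric exchange between consecutive ones. Concatenating, the total length stays far below $50$ and the width far below $16$.

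The main obstacle I anticipate is not any deep argument but the bookkeeping of the finite case analysis: one must pin down $\cB(R_{10})$ precisely (the $162$ bases, or rather the structure of partitions into two bases), identify the orbits of compatible disjoint basis pairs under $\mathrm{Aut}(R_{10})$, and for a representative of each orbit write down an explicit exchange sequence — then invoke the automorphism action to transport it to all members of the orbit. The even-cycle-matroid model of Figure~\ref{fig:r10graph} is the tool I would lean on to make these computations tractable and checkable, since it turns basis partitions into visualizable spanning-tree-like structures in a graph with few vertices. Everything else (the length and width bounds, the $F$-avoiding refinements which are not even needed here since $R_{10}$ sits at an internal node and the statements as quoted carry no $F$) follows immediately once reachability and the rank-many inversion are established.
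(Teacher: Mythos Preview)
Your proposal is essentially the same approach as the paper: reduce to disjoint bases, use the even-cycle-matroid model of $R_{10}$ on $K_5$, exploit the automorphism group to reduce to a single canonical starting pair, and then verify reachability by an explicit finite check. The paper does this a bit more directly --- it identifies the two possible shapes of a basis in a partition (a $5$-cycle or a triangle plus two pendant edges), exhibits an explicit automorphism identifying them, and then a single figure shows every basis pair is reachable from the canonical one in at most $5$ exchanges, handling both theorems at once --- whereas your plan separates Gabow from White and routes through inversions; also, your greedy unique-exchange idea from McGuinness is a red herring (nothing forces those exchanges to progress toward $(X_2,X_1)$), and circuits of $R_{10}$ have size $4$ or $6$, not only $4$, but neither of these affects the soundness of your finite-verification plan.
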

\begin{proof}
Let $\cX=(X_1, X_2)$ be a basis pair of $R_{10}$. It follows from Theorem~\ref{thm:seymour} that each proper minor of $R_{10}$ is graphic or cographic, hence we may assume that $X_1$ and $X_2$ are disjoint by Lemma~\ref{lem:disjoint}.

We will use the representation of $R_{10}$ as the even-cycle matroid of the complete graph $K_5$ on vertices $\{v_1, v_2, v_3, v_4, v_5\}$, see e.g.~\cite[page 238]{oxley2011matroid}. The ground set of this matroid is the edge set of $K_5$ and the circuits are the cycles of length four and the unions of two triangles having exactly one vertex in common. To get an isomorphism between this matroid and the binary matroid represented by $A$, map an edge $v_iv_j$ to the column of $A$ in which the $i$th and $j$th entries are zero, see Figure~\ref{fig:r10graph}. The bases are the sets of five edges containing no even cycles and exactly one odd cycle of $K_5$. This implies that $(Z_1, Z_2)$ is a basis pair for a partition $E=Z_1 \cup Z_2$ if and only if $Z_1$ is a 5-cycle of $K_5$ or $Z_1 = \{v_iv_j, v_iv_k, v_jv_k, v_jv_l, v_kv_m\}$ for some $\{i,j,k,l,m\} = \{1,2,3,4,5\}$. Observe that there is an isomorphism that maps a basis of the latter form into a 5-cycle, e.g.\ $(v_1v_2, v_1v_3, v_2v_3, v_2v_4, v_3v_5)$ can be mapped to $(v_1v_3, v_1v_2, v_4v_5, v_2v_4, v_3v_5)$ by mapping $(v_1v_4, v_1v_5, v_2v_5, v_3v_4, v_4v_5)$ to $(v_1v_5, v_1v_4, v_2v_5, v_3v_4, v_2v_3)$. Therefore, to prove Theorems~\ref{thm:regwhite} and \ref{thm:reggabow} for $R_{10}$, we may assume that $X_1$ is a 5-cycle.  Figure~\ref{fig:r10} illustrates that each pair of disjoint bases of $R_{10}$ is reachable from this basis with at most 5 exchanges.
\end{proof}

\begin{figure}[t!]
\centering
\includegraphics[width=0.85\textwidth]{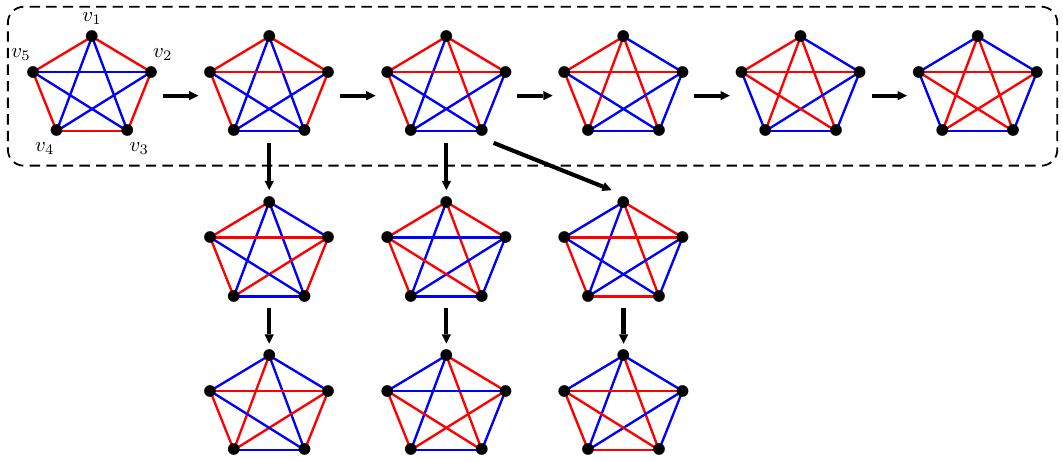}
\caption{Exchange sequences starting from a basis pair $(X_1, X_2)$ where $X_1$ and $X_2$ are 5-cycles. Basis pairs in the dashed set show a sequence of length 5 to $(X_2, X_1)$. From each basis pair of $R_{10}$, one can obtain a basis pair shown on the figure with reflections and rotations, thus each pair of bases can be obtained from $(X_1, X_2)$ with at most 5 exchanges by its symmetry.}
\label{fig:r10}
\end{figure}

Though it is not needed in the proof of Theorem~\ref{thm:regwhite} and Theorem~\ref{thm:reggabow}, we verify analogous statements for the Fano matroid as well. This will allow us to extend our results to max-flow min-cut matroids.

\begin{prop} \label{prop:f7}
For any pair $\cX,\cY$ of compatible basis pairs of the Fano matroid, there exists an $\cX$-$\cY$ exchange sequence of width at most $4$ and length at most $9$. For disjoint bases $X_1,X_2$ of the Fano matroid, there exists an $(X_1,X_2)$-$(X_2,X_1)$ exchange sequence of length $3$. Furthermore, such sequences can be determined in polynomial time.
\end{prop}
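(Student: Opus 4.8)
The plan is to mimic the strategy used for $R_{10}$ in Proposition~\ref{prop:r10}: reduce to disjoint bases, exploit the high degree of symmetry of the matroid, and then verify the bound by an essentially finite case check. First I would invoke Lemma~\ref{lem:disjoint} to reduce to the case where $\cX=(X_1,X_2)$ and $\cY=(Y_1,Y_2)$ are compatible pairs of \emph{disjoint} bases; since $F_7$ has rank $3$ on a $7$-element ground set, two disjoint bases cover only $6$ of the $7$ elements, so after deleting the common element $X_1\cap X_2$ (which contracts to $F_7$ restricted to some $6$-element set, a graphic or cographic minor by Seymour's theorem) and deleting the element outside $X_1\cup X_2$, we are left with a rank-$3$ instance on $6$ elements where $\cX$ and $\cY$ partition the ground set into two triangles. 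Actually it is cleaner to observe directly: the complement of a basis of $F_7$ is a $4$-element set, and a partition of a $6$-subset $E'$ into two bases of $F_7|E'$ corresponds, since $F_7$ is self-dual, to a common base of $F_7|E'$ and its dual. The key structural fact is that $F_7$ has a large automorphism group (acting as $\mathrm{PGL}(3,2)$, order $168$, on the $7$ points), so the number of orbits of (ordered) pairs of disjoint bases is small.

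The main computation I would carry out is the following. Enumerate the bases of $F_7$: all $\binom{7}{3}=35$ triples except the $7$ lines, giving $28$ bases. For each $6$-element subset $E'$ of the ground set, list the partitions of $E'$ into two bases; by the self-duality and the transitivity of $\mathrm{PGL}(3,2)$ on points, all these instances are isomorphic, so it suffices to fix one $E'$, say $E'=\{a,b,c,d,e,f\}$ omitting $g$, and enumerate its partitions into two bases. Then, within this fixed $6$-element instance, I would (i) build the exchange graph whose vertices are the ordered basis pairs $(Z_1,Z_2)$ with $Z_1\cup Z_2=E'$, $Z_1\cap Z_2=\emptyset$, and edges given by single symmetric exchanges; (ii) check that it is connected and compute its diameter, verifying the diameter is at most $9$ with width at most $4$ — this gives the first assertion; and (iii) check that for every vertex $(X_1,X_2)$ there is a path of length exactly $3$ to $(X_2,X_1)$, which gives the Gabow-type assertion (the lower bound $3$ is automatic since each exchange moves one element from $X_1$ to $X_2$ and $r=3$). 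The exchange graph here has very few vertices (a handful of partitions of a $6$-set into two triangles of $F_7$), so this is a short finite verification, entirely analogous to Figure~\ref{fig:r10} but smaller; it can be presented either by a figure or by an explicit listing of the sequences. Algorithmic determination in polynomial time is then immediate: the reductions of Lemma~\ref{lem:disjoint} are polynomial, identifying which isomorphism type one is in takes constant time, and reading off the precomputed sequence is constant time.

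The step I expect to require the most care is not a conceptual obstacle but bookkeeping: correctly enumerating the partitions of a $6$-element subset of $F_7$ into two bases and confirming that every symmetric exchange used stays feasible (i.e.\ both resulting triples are bases, equivalently neither is a line of the Fano plane). One must be a little careful that after contracting $X_1\cap X_2$ the matroid really is $F_7$ restricted/contracted appropriately rather than something one needs to re-analyze; but since we only use Lemma~\ref{lem:disjoint} to pass to the disjoint case and then work inside $F_7$ itself (viewing $\cX,\cY$ as covering a fixed $6$-set), this subtlety is avoided. A minor point worth double-checking is the width bound $4$: since $r=3$, Theorem~\ref{thm:regwhite}'s generic bound would read $4(r-1)=8$, so the claimed width $4$ is a genuine improvement that must be read off from the explicit sequences rather than deduced from the general theorem, and likewise the length bound $9$ beats $2r^2=18$. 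Once the finite check is done and displayed, the proof is complete.
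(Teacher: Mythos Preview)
Your enumeration approach is correct in principle and would certainly succeed, but it is considerably more work than necessary and misses the one-line argument the paper uses. The paper does not mimic the $R_{10}$ case at all. Instead it observes that $F_7$ is an excluded minor for the class of graphic matroids, so every proper minor of $F_7$ is graphic. Since $F_7$ has $7$ elements and rank $3$, for any basis pair $(X_1,X_2)$ the set $X_1\cup X_2$ has at most $6$ elements, hence $F_7|(X_1\cup X_2)$ is a proper minor and therefore graphic. One then applies Theorem~\ref{thm:graphicwhite} and Theorem~\ref{thm:graphicgabow} with $r=3$ directly, yielding width $2(r-1)=4$, length $r^2=9$, and Gabow length $r=3$ --- exactly the claimed bounds, with no case analysis.

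You actually brush up against this idea when you write ``a graphic or cographic minor by Seymour's theorem'', but you use it only as an aside in the reduction step and then revert to explicit enumeration. The tell that enumeration is overkill is precisely your final paragraph: you note that $4$ and $9$ beat the generic regular bounds $4(r-1)=8$ and $2r^2=18$ and conclude these must be ``read off from the explicit sequences''. But $4=2(r-1)$ and $9=r^2$ are exactly the \emph{graphic} bounds from Theorem~\ref{thm:graphicwhite}, which should signal that the instance is secretly graphic. Your route would work and would produce a figure analogous to Figure~\ref{fig:r10}; the paper's route avoids all computation by reducing to results already proved.
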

\begin{proof}
The matroid $F_7$ is an excluded minor of graphic matroids, see~\cite[Thoerem~10.3.1]{oxley2011matroid}. For any pair of bases $\cX = (X_1, X_2)$, the restriction $F_7|(X_1 \cup X_2)$ is a proper minor of $F_7$, thus it is graphic. Since the rank of $F_7$ is $3$, the statements follow from Theorems~\ref{thm:graphicwhite} and \ref{thm:graphicgabow}.    
\end{proof}

\subsubsection{Reducing the Graphic Part to 4-regular Graphs}
\label{sec:nemtommi2}

This section is devoted to proving our main structural observation. The proposition is based on a careful combination of the result of Aprile and Fiorini on decompositions trees of $3$-connected matroids (Theorem~\ref{thm:af1}), the result of McGuiness on the dual of $3$-sums (Proposition~\ref{prop:mcguinness}), our observation on the cographicness of a matroid obtained from a cographic matroid by a \dy exchange using a coindependent triangle (Lemma~\ref{lem:co}), and the reduction steps introduced in Section~\ref{sec:reductions}.

\begin{prop}\label{prop:reductions}
    Let $M$ be a $3$-connected regular matroid that is not graphic, cographic or isomorphic to $R_{10}$. Then, there exists a regular matroid $M_\circ$ and a graphic matroid $M_\bullet$ such that $M_\circ \oplus_3 M_\bullet \in \{M, M^*\}$, and such a decomposition can be determined using a polynomial number of oracle calls. Moreover, if $M$ contains no circuit or cocircuit of size at most 3, its ground set can be partitioned into two bases and it contains no nontrivial tight set, then $M_\bullet$ is the graphic matroid of a simple 4-regular graph.
\end{prop}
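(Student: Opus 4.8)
The plan is to start from the decomposition tree $\cT$ of $M$ provided by Theorem~\ref{thm:af1} (available because $M$ is $3$-connected and, since it is not $R_{10}$, distinct from $R_{10}$), and to argue that after possibly dualizing $M$ we may assume a leaf node $v$ of $\cT$ is labelled by a \emph{graphic} matroid. If some leaf is already graphic we are done with this part; otherwise every leaf is labelled by a cographic matroid. Pick a leaf $v$ with neighbour $u$, so $M$ is of the form $M_v\oplus_3 N$ along a coindependent triangle $T$, where $M_v=M^*(G_v)$ is cographic and $N$ is the matroid obtained by performing the rest of the $3$-sums. Using Proposition~\ref{prop:mcguinness}, $M^*=\Delta_T(M_v)^*\oplus_3\Delta_T(N)^*$ along the triad-turned-triangle $T'$. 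By the ``moreover'' clause of Theorem~\ref{thm:af1}, since $G_v$ is the graph of a cographic basic matroid, the triangle $T$ used in the $3$-sum does not correspond to a nontrivial cut of $G_v$; hence $T$ corresponds to a trivial cut, and Lemma~\ref{lem:co} gives that $\Delta_T(M_v)^{**}=\Delta_T(M_v)$ is cographic — wait, more precisely Lemma~\ref{lem:co} tells us $\Delta_T(M_v)$ is cographic, so $\Delta_T(M_v)^*$ is graphic. Thus in the decomposition of $M^*$ the node formerly labelled by the cographic $M_v$ is now labelled by the \emph{graphic} matroid $\Delta_T(M_v)^*$, and writing $M_\circ\coloneqq\Delta_T(N)^*$ (a regular matroid, being a $3$-sum-free assembly of minors of a regular matroid, hence regular) and $M_\bullet\coloneqq\Delta_T(M_v)^*$ graphic, we obtain $M_\circ\oplus_3 M_\bullet=M^*$, as desired. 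The algorithmic claim follows because, as noted at the end of Section~\ref{sec:prelim}, a decomposition tree without bad nodes can be computed with polynomially many oracle calls, and the \dy exchange and dualization are explicit operations on the small basic pieces.

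For the ``moreover'' part, assume $M$ has no circuit or cocircuit of size at most $3$ (equivalently $M$ is simple and cosimple with girth and cogirth at least $4$), its ground set partitions into two bases, and it has no nontrivial tight set. Consider the graphic matroid $M_\bullet$ produced above, say $M_\bullet=M(G)$ for a connected graph $G$. First I would show $G$ is simple: a loop or a pair of parallel edges of $G$ would be a circuit of $M(G)$ of size $1$ or $2$, and I would argue (using the description of the bases of a $3$-sum in Section~\ref{sec:prelim}, and that $T$ is coindependent so no element of $T$ is a coloop) that such a small circuit of $M_\bullet$ not meeting $T$ survives as a small circuit of $M$ — contradiction; small circuits that do meet $T$ can be handled because $T$ being a triangle means a circuit inside $T\cup\{e\}$ would make $T\cup\{e\}$ contain a circuit of size $\le 3$ again pushing down to a small circuit of $M$. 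Dually, a vertex of $G$ of degree $\le 3$ gives a cocircuit (trivial cut) of $M(G)$ of size $\le 3$; I would show this yields either a triad of $M$ — excluded by hypothesis — or, in the degenerate case where the small cut interacts with $T$, a nontrivial tight set or a small cocircuit of $M$, all excluded. Hence every vertex of $G$ has degree $\ge 4$. Finally, to force degree exactly $4$ everywhere, I would use that the ground set of $M$ partitions into two bases together with the translation of this property across the $3$-sum: the bases of $M$ restrict to near-bases on each side, and a counting/tightness argument (in the spirit of the computation $|E|=2r_M(E)$ used in Theorem~\ref{thm:graphicwhite}, combined with $r(M_\circ\oplus_3 M_\bullet)=r(M_\circ)+r(M_\bullet)-2$) shows that if $G$ had a vertex of degree $\ge 5$ then $\sum_v d_G(v)=2|E(G)|$ would be too large, creating either a nontrivial tight set of $M$ or a vertex of degree $\le 3$ elsewhere; since all degrees are $\ge 4$ and the average degree is forced to be $4$, $G$ is $4$-regular.

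The main obstacle I expect is the bookkeeping at the interface triangle $T$: translating ``$M$ has no small circuit/cocircuit and no nontrivial tight set'' into the corresponding statements for $M_\bullet=M(G)$ requires carefully tracking how circuits, cocircuits and tight sets of $M$ decompose under the $3$-sum (via the basis description in Section~\ref{sec:prelim} and the cycle description $M=M_\circ\triangle M_\bullet$), and in particular handling the cases where the offending small set of $M_\bullet$ overlaps $T$ in one or two elements. The degree-regularity conclusion in particular seems to need that the two-bases partition of $M$ induces, on the $M_\bullet$-side, a partition of $E(G)-T$ into two forests that become spanning trees after adding two elements of $T$ (the situation of Corollary~\ref{cor:graphTchoose2}); pinning down exactly which tightness inequalities this forces on $G$, and checking they are equivalent to $4$-regularity given the exclusion of small cuts, is the delicate part of the argument. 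Everything else — the dualization trick, the appeal to Lemma~\ref{lem:co}, and the algorithmic wrap-up — is routine given the cited results.
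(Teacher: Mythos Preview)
Your proposal is correct and follows essentially the same route as the paper: use Theorem~\ref{thm:af1} to peel off a leaf $M_\bullet$, dualize via Proposition~\ref{prop:mcguinness} and Lemma~\ref{lem:co} if that leaf is cographic (noting that a planar cographic leaf is already graphic, which patches your omission of the ``nonplanar'' qualifier in Theorem~\ref{thm:af1}), and then for the 4-regularity transfer the cocircuit and tight-set hypotheses from $M$ to $M_\bullet$ followed by a degree count.

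The one place your sketch diverges is the treatment of the three triangle vertices $v_1,v_2,v_3$. You plan to show $d(v_i)\ge 4$ by pushing a small cut of $G$ through the 3-sum, and then conclude by ``average degree $=4$''; the paper instead never proves $d(v_i)\ge 4$ directly. It first uses the no-tight-set hypothesis on $E_\bullet-T$ to get $|E_\bullet-T|\le 2\,r_{M_\bullet}(E_\bullet)-1$, then a degree count on $V'=V-\{v_1,v_2,v_3\}$ (where cocircuit transfer gives $d(v)\ge 4$) forces equality everywhere and in particular $\sum_i d(v_i)=12$; finally the tight-set bound on $E_\bullet[V'+v_i]$ gives $d(v_i)\le 4$, hence equality. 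This sidesteps exactly the ``degenerate case where the small cut interacts with $T$'' that you flagged as the obstacle, and it does not need Corollary~\ref{cor:graphTchoose2} or any explicit description of how the two bases of $M$ restrict to the $M_\bullet$-side.
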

\begin{proof}
By Theorem~\ref{thm:af1}, $M$ can be written in the form $M=M_1 \oplus_3 M_2$, where $M_1$ is a regular matroid and $M_2$ is a graphic or cographic matroid. Moreover, if $M_2$ is not graphic, then it is the cographic matroid of a graph $G$ such that the 3-sum is taken along a triangle $T$ of the matroids which is a trivial cut of $G$. If $M_2$ is graphic, then let $M_\circ \coloneqq M_1$ and $M_\bullet \coloneqq M_2$. If $M_2$ is not graphic, then $M^*=\Delta_T(M_1) \oplus \Delta_T(M_2)$ by Proposition~\ref{prop:mcguinness}, where $\Delta_T(M_2)$ is a cographic matroid by Lemma~\ref{lem:co}, thus $M_\circ \coloneqq \Delta_T(M_1)^*$ and $M_\bullet \coloneqq \Delta(M_2)^*$ satisfy the requirements of the first part of the statement. 

Assume now that $M$ satisfies all the conditions of the proposition. Let $E_\circ$ and $E_\bullet$ denote the ground sets of $M_\circ$ and $M_\bullet$, respectively, and let $T\coloneqq E_\bullet \cap E_\circ$. The ground set of $M$ can be partitioned into two bases and $M$ contains no nontrivial tight set, hence the same holds for $M^*$ as well. Since $M_\circ \oplus_3 M_\bullet \in \{M, M^*\}$ contains no nontrivial tight set, the restrictions of $M_\circ \oplus_3 M_\bullet$ and $M_\bullet$ to $E_\bullet - T$ are the same, and $T$ is coindependent in $M_\bullet$, we get \[|E_\bullet - T| \le 2\cdot r_{M_\circ \oplus_3 M_\bullet}(E_\bullet - T)-1 = 2\cdot r_{M_\bullet}(E_\bullet - T) -1 = 2\cdot r_{M_\bullet}(E_\bullet)-1.\]

Let $G=(V,E_\bullet)$ be a connected graph whose graphic matroid is $M_\bullet$ and let $v_1$, $v_2$ and $v_3$ denote the vertices of the triangle $T$. We define $V' \coloneqq V- \{v_1, v_2, v_3\}$. It follows from the description of $\cB(M_\circ \oplus_3 M_\bullet)$ that the cocircuits of $M_\circ \oplus_3 M_\bullet$ and $M_\bullet$ contained in $E_\bullet - T$ are the same. Indeed, a subset of $E_\bullet- T$ intersects each basis of $M_\bullet$ if and only if it intersects each basis of $M_\circ \oplus_3 M_\bullet$. Since each cocircuit of $M_\circ \oplus_3 M_\bullet$ has size at least 4,  each vertex in $V'$ has degree at least 4 in $G$. As $M_\bullet - T$ contains no nontrivial tight set, $E_\bullet - T$ contains no parallel edges. These imply $|V'|\ge 2$, since $V'=\emptyset$ would contradict $|E_\bullet - T| \ge 4$, and $|V'|=1$ would contradict that the vertices in $V'$ have degree at least 4. Since $|V'|\ge 2$ and $M_\bullet \backslash T$ contains no nontrivial tight set, $|E_\bullet[V']| \le 2\cdot |V'|-3$ follows. Therefore,
\begin{align} \label{eq:degcount} \tag{$*$}
4\cdot |V'| \le \sum_{v \in V'} d(v) =  2\cdot|E_\bullet[V']| + \sum_{i=1}^3 d(v_i) -2\cdot |E_\bullet[\{v_1, v_2, v_3\}]| \le 2 \cdot (2\cdot |V'|-3) + \sum_{i=1}^3 d(v_i) - 6,
\end{align}
thus $\sum_{i=1}^3 d(v_i) \ge 12$. Using that $r_{M_\bullet}(E_\bullet)=|V|-1=|V'|+2$, we obtain \[2\cdot |E_\bullet| = \sum_{v \in V'} d(v) + \sum_{i=1}^3 d(v_i) \ge 4 |V'|+ 12 = 4\cdot r_{M_\bullet}(E_\bullet) + 4, \]  which yields $|E_\bullet - T|  = |E_\bullet| -3 \ge 2 r_{M_\bullet}(E_\bullet)-1$. As we have already shown that $|E_\bullet - T| \le 2 r_{M_\bullet}(E_\bullet)-1$, we obtain $|E_\bullet - T| = 2\cdot r_{M_\bullet}(E_\bullet)-1$ and all the inequalities in \eqref{eq:degcount} hold with equality. This implies that $d(v)=4$ for each $v \in V'$, $|E_\bullet[V']| = 2\cdot |V'|-3$, $\sum_{i=1}^3 d(v_i) = 12$ and $|E_\bullet[\{v_1, v_2, v_3\}]| = 3$. The last equality implies that the only edges spanned by $\{v_1, v_2, v_3\}$ are the edges of $T$, hence $G$ is a simple graph, as $E_\bullet - T$ contains no parallel edges.

It remains to show that $d(v_1)=d(v_2)=d(v_3)=4$.  Since $|E_\bullet[V'+v_i]| \le 2\cdot |V'+v_i| -3 = 2\cdot |V'|-1$ holds for $i \in \{1,2,3\}$, we get \[|E_\bullet[V']| + 2 = 2\cdot |V'|-1  \ge |E_\bullet[V'+v_i]| = |E_\bullet[V']| + d(v_i)-2,\] where the last equality follows from $E_\bullet[\{v_1, v_2, v_3\}] = T$. This yields $d(v_i)\le 4$ for $i \in \{1,2,3\}$, hence $\sum_{i=1}^3 d(v_i) = 12$ implies that $d(v_1)=d(v_2)=d(v_3)=4$.
\end{proof}

\begin{rem}\label{rem:construction}
Though Proposition~\ref{prop:reductions} significantly reduces the number of matroids for which Theorem~\ref{thm:regwhite} and Theorem~\ref{thm:reggabow} need to be verified, there indeed exist regular matroids for which none of the reduction steps apply. As an example, let $G$ be a complete bipartite graph with color classes $A=\{a_1,a_2,a_3\}$ and $B=\{b_1,b_2,b_3,b_4\}$. Furthermore, let $H$ be the graph obtained from $G$ by deleting the edge $a_1b_1$ and adding three new vertices $w_1,w_2,w_3$ with the extra edges $w_1a_1,w_1b_1,w_2b_1,w_2b_2,w_3b_3,w_3b_4,w_1w_2,w_1w_3$ and $w_2w_3$. Let $H_i$ be a copy of $H$ for $1 \le i \le 4$ with vertices $a^i_1, a^i_2, a^i_3, b^i_1, b^i_2, b^i_3, b^i_4, w^i_1, w^i_2, w^i_3$. 
Let $M_0$ denote the cographic matroid of $G$, and $N_i$ the graphic matroid of $H_i$ for $1\leq i\leq 4$. Note that $\delta(b_i)$ is a coindependent triangle of $M$ and $\{w^i_1w^i_2,w^i_1w^i_3,w^i_2w^i_3\}$ is a coindependent triangle of $N_i$ for $1\leq i\leq 4$.

Let $M$ denote the matroid that is obtained by taking the $3$-sum of $M_0$ with the $N_i$s in an arbitrary order; see Figure~\ref{fig:construction} for an illustration. Here the $3$-sum with $N_i$ is along $\delta(b_i)$ and $\{w^i_1w^i_2,w^i_1w^i_3,w^i_2w^i_3\}$, where the edge $b_ia_j$ is identified with $w^i_jw^i_{j+1}$ for $j=1,2,3$ (indices are in a cyclic order). It is not difficult to check that the ground set of $M$ partitions into two disjoint bases. Also, an easy case analysis shows that none of the reduction operations can be applied to $M$ and $M^*$, i.e., both $M$ and $M^*$ are 3-connected and contain neither a tight set nor a cocircuit of size at most three. Observe that the example is consistent with the statement of Proposition~\ref{prop:reductions} in that $N_i$ is a graphic matroid of a 4-regular graph for $1\leq i\leq 4$.

Interestingly, the bound on the minimum size of a cocircuit of $M$ and $M^*$ is tight. More precisely, if $M$ is a regular matroid whose ground set partitions into two disjoint bases, then it contains a circuit or cocircuit of size at most $4$. This immediately follows from Proposition~\ref{prop:reductions} if $M$ is 3-connected and contains no nontrivial tight set; the remaining cases can be verified using Seymour's decomposition.
\end{rem}

\begin{figure}[t!]
    \centering
    \includegraphics[width=0.7\textwidth]{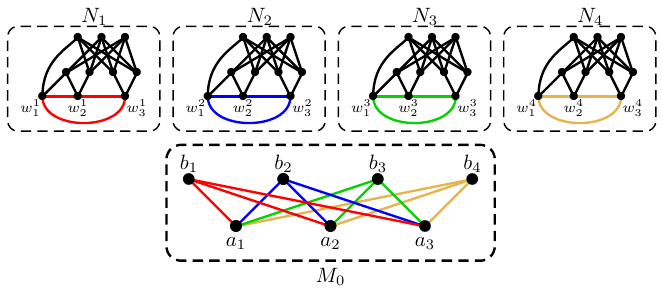}
    \caption{Illustration of Remark~\ref{rem:construction}, where $M_0$ is the cographic matroid of $G$ and $N_i$ is the graphic matroid of $H_i$ for $1\leq i\leq 4$. The matroid $M$ is obtained by taking the 3-sum of $M_0$ with the $N_i$s in an arbitrary order along the triangles having the same color.}
    \label{fig:construction}
\end{figure}

\subsection{Proof of White's Conjecture for Basis Pairs of Regular Matroids}
\label{sec:main}

\begin{proof}[Proof of Theorem~\ref{thm:regwhite}]    
We prove by induction on the rank of the matroid. The statement holds when $r_M(E) \le 2$ by Claim~\ref{cl:triv}, therefore we consider the case $r_M(E) \ge 3$. We may assume that $X_1 \cup X_2 = Y_1 \cup Y_2 = E$ by deleting the elements in $E- (X_1 \cup X_2)$. Using the induction hypothesis and Corollary~\ref{cor:numstepstriad} with $c=2$ and $F=\emptyset$, we may assume that $X_1 \cap X_2 = Y_1 \cap Y_2 = \emptyset$, $M$ contains no nontrivial tight set and contains no triad. As $E$ can be partitioned into two bases, the pairs of disjoint bases of $M$ and $M^*$ are the same, hence we may also assume that $M^*$ contains no triad, that is, $M$ contains no triangle. Note that the assumption that $M$ contains no nontrivial tight set implies that it is 2-connected and has no circuit of cocircuit of size at most two. Using the induction hypothesis and Corollary~\ref{cor:numsteps2sum} with $c=2$, we may assume that $M$ is 3-connected. Then, by Proposition~\ref{prop:reductions}, there exist regular matroids $M_\circ$ and $M_\bullet$ such that $M_\circ \oplus_3 M_\bullet \in \{M, M^*\}$ and $M_\bullet$ is the graphic matroid of a simple 4-regular graph $G$. Since the pairs of disjoint bases of $M$ and $M^*$ are the same, we may assume that $M_\circ \oplus_3 M_\bullet = M$. 

Let $E_\circ$ and $E_\bullet$ denote the ground sets of $M_\circ$ and $M_\bullet$, respectively. Define $T \coloneqq E_\circ \cap E_\bullet$ and let $V$ denote the set of vertices of $G$. Since $M$ contains no nontrivial tight set, $|(E- T)[U]| \le 2\cdot |U|-3$ holds for each $U \subseteq V$ with $|U|\ge 2$. As $G$ is 4-regular, $|E_\bullet- T| = |E_\bullet|-3 = 2\cdot |V|-3 = 2\cdot r_{M_\bullet}(E_\bullet)-1$, hence $|E_\circ - T| = 2\cdot r_{M_\circ}(E_\circ)-3$. The characterization of $\cB(M_\circ \oplus_3 M_\bullet)$ implies that for a partition $Z_1 \cup Z_2$ of $E$,  $\cZ=(Z_1, Z_2)$ is a pair of disjoint bases of $M$ if and only if
\begin{typelist}\itemsep0em
\item $Z^\circ_1+\{t_1,t_2\}$, $Z^\circ_2+t_i$ and $Z^\circ_2+t_j$ are bases of $M_\circ$, $Z^\bullet_1$, $Z^\bullet_2+t_i$ and $Z^\bullet_2+t_k$ are bases of $M_\bullet$ for some $i,j,k$ with $\{i,j,k\}=\{1,2,3\}$, or \label{it:type1}
\item $Z^\circ_1+t_i$, $Z^\circ_1+t_j$, and $Z^\circ_2+\{t_1,t_2\}$ are bases of $M_\circ$, $Z^\bullet_1+t_i$, $Z^\bullet_1+t_k$ and $Z^\bullet_2$ are bases of $M_\bullet$ for some $i,j,k$ with $\{i,j,k\}=\{1,2,3\}$. \label{it:type2}
\end{typelist}
Note that $|Z^\circ_1| = r_{M_\circ}(E_\circ)-2$ if the pair $\cZ$ is of Type~\ref{it:type1}, and $|Z^\circ_1| = r_{M_\circ}(E_\circ)-1$ if $\cZ$ is of Type~\ref{it:type2}.

For a pair $\cZ=(Z_1, Z_2)$ of disjoint basis of $M$, let $\{i_\cZ, j_\cZ, k_\cZ\} = \{1,2,3\}$ be such that  $Z^\circ_2 + t_{i_\cZ}, Z^\circ_2+t_{j_\cZ} \in \cB(M_\circ)$ and $Z^\bullet_2+t_{i_\cZ}, Z^\bullet_2+t_{k_\cZ} \in \cB(M_\bullet)$ if $\cZ$ is of Type~\ref{it:type1}, and $Z^\circ_1+t_{i_\cZ}, Z^\circ_1+t_{j_\cZ} \in \cB(M_\circ)$ and $Z^\bullet_1+t_{i_Z}, Z^\bullet_1+t_{k_Z} \in \cB(M_\bullet)$ if $\cZ$ is of Type~\ref{it:type2}. By Lemma~\ref{lem:0or2}, the indices $i_\cZ$, $j_\cZ$ and $k_\cZ$ are uniquely defined. By symmetry, we may assume that $|X^\circ_1| = r_{M_\circ}(E_\circ)-2$ is of Type~\ref{it:type1}. We may also assume that $1 \in \{i_\cX, j_\cX\} \cap \{i_\cY, j_\cY\}$. By Lemma~\ref{lem:4reg}, there exists a partition $E^\bullet - T = F^\bullet_1 \cup F^\bullet_2$ and an edge $e \in F^\bullet_1$ such that each of $F^\bullet_1$, $F^\bullet_2+t_2$, $F^\bullet_2+t_3$, $F^\bullet_1-e+t_2$, $F^\bullet_1-e+t_3$ and $F^\bullet_2+e$ is a basis of $M_\bullet$. Moreover, $X^\circ_1+t_1+t_2$ and $X^\circ_2+t_1$ are bases of $M_\circ$, hence $(X^\circ_1 \cup F^\bullet_1, X^\circ_2 \cup F^\bullet_2)$ is a pair of disjoint bases of $M$. Similarly, by letting $(\widetilde{F}^\bullet_1, \widetilde{F}^\bullet_2) \coloneqq (F^\bullet_1, F^\bullet_2)$ if $\cY$ is of Type~\ref{it:type1} and $(\widetilde{F}^\bullet_1, \widetilde{F}^\bullet_2) \coloneqq (F^\bullet_1-e, F^\bullet_2+e)$ if $\cY$ is of Type~\ref{it:type2},  $(Y^\circ_1 \cup \widetilde{F}^\bullet_1, Y^\circ_2 \cup \widetilde{F}^\bullet_2)$ is a pair of disjoint bases of $M$. We construct an $\cX$-$\cY$ exchange sequence by concatenating exchange sequences

\begin{enumerate}[label=(\arabic*)]\itemsep0em
\item \label{seq:1} from $\cX$ to $(X^\circ_1 \cup F^\bullet_1, X^\circ_2 \cup F^\bullet_2)$,
\item \label{seq:2} from $(X^\circ_1 \cup F^\bullet_1, X^\circ_2 \cup F^\bullet_2)$ to $(Y^\circ_1 \cup \widetilde{F}^\bullet_1, Y^\circ_2 \cup \widetilde{F}^\bullet_2)$, and 
\item \label{seq:3} from $(Y^\circ_1 \cup \widetilde{F}^\bullet_1, Y^\circ_2 \cup \widetilde{F}^\bullet_2)$ to $\cY$.
\end{enumerate}

For sequence~\ref{seq:1}, consider the pairs $\cX' \coloneqq (X^\bullet_1, X^\bullet_2+t_{k_\cX})$ and $\cF'\coloneqq (F^\bullet_1, F^\bullet_2+t_{k_\cX})$ of disjoint bases of the graphic matroid $M_\bullet$. By Theorem~\ref{thm:graphicwhite}, there exists a $\{t_{k_\cX}\}$-avoiding $\cX'$-$\cF'$ exchange sequence in $M_\bullet$ of width at most $2\cdot (r_{M_\bullet}(E_\bullet)-1)$ and length at most $r_{M_\bullet}(E_\bullet)^2$. As none of the exchanges use the element $t_{k_\cX}$, each basis pair $\cZ'$ of the sequence can be written as $\cZ'=(Z^\bullet_1, Z^\bullet_2 + t_{k_\cX})$. Consider the same symmetric exchanges applied to $\cX$ in $M$, that is, for a basis pair $(Z^\bullet_1, Z^\bullet_2 + t_{k_\cX})$ of the $\cX'$-$\cF'$ sequence, consider the basis pair $(X^\circ_1 \cup Z^\bullet_1, X^\circ_2 \cup Z^\bullet_2)$ of $M$. This gives an exchange sequence from $\cX$ to $(X^\circ_1 \cup F^\bullet_1, X^\circ_2 \cup F^\bullet_2)$ of width at most $2\cdot (r_{M_\bullet}(E_\bullet)-1)$ and length at most $r_{M_\bullet}(E_\bullet)^2$.

For sequence~\ref{seq:2}, let $\cX'' \coloneqq (X^\circ_1+t_2, X^\circ_2)$ and $\cY'' \coloneqq (Y^\circ_1 + t_2, Y^\circ_2)$ if $\cY$ is of Type~\ref{it:type1}, and $\cY'' \coloneqq (Y^\circ_1, Y^\circ_2 + t_2)$ of $\cY$ is of Type~\ref{it:type2}. Both $\cX''$ and $\cY''$ are pairs of disjoint bases of the regular matroid $M_\circ/t_1$, hence, by the induction hypothesis, there exists an $\cX''$-$\cY''$ exchange sequence in $M_\circ/t_1$ of width at most $4\cdot r_{M_\circ / t_1}(E_\circ-t_1)$ and length at most $2\cdot r_{M_\circ / t_1}(E_\circ-t_1)^2$. We perform the steps of this $\cX''$-$\cY''$ exchange sequence on $(X^\circ_1 \cup F^\bullet_1, X^\circ_2 \cup F^\bullet_2)$, but whenever a symmetric exchanges uses $t_2$ and some other element $f$, then exchange $e$ and $f$ instead. Formally, if a symmetric exchange transforms $(Z^\circ_1+t_2, Z^\circ_2)$ into $(Z^\circ_1+f, Z^\circ_2-f+t_2)$, then this step is replaced by transforming $(Z^\circ_1 \cup F^\bullet_1, Z^\circ_2 \cup F^\bullet_2)$ into $((Z^\circ_1+f)\cup (F^\bullet_1-e), (Z^\circ_2-f)\cup (F^\bullet_2+e))$. Note that $(Z^\circ_1 \cup F^\bullet_1, Z^\circ_2 \cup F^\bullet_2)$ is a basis of $M$. Indeed, this follows from $Z^\circ_1+\{t_1,t_2\} \in \cB(M_\circ)$, $F^\bullet_1 \in \cB(M_\bullet)$ and $Z^\circ_2+t_1 \in \cB(M_\circ)$, $F^\bullet_2 + t_2, F^\bullet_2 + t_3 \in \cB(M_\bullet)$. Analogously, $((Z^\circ_1+f)\cup (F^\bullet_1-e), (Z^\circ_2-f)\cup (F^\bullet_2+e))$ is a basis pair of $M$. This follows from $Z^\circ_1+\{f,t_1\} \in \cB(M_\circ)$, $F^\bullet_1-e+t_2, F^\bullet_1-e+t_3 \in \cB(M_\bullet)$ and $Z^\circ_2-f+\{t_1,t_2\} \in \cB(M_\circ)$, $F^\bullet_2 + e \in \cB(M_\bullet)$. Similarly, if a symmetric exchange transforms $(Z^\circ_1, Z^\circ_2+t_2)$ into $(Z^\circ_1-f+t_2, Z^\circ_2+f)$, then this step is replaced by transforming $(Z^\circ_1 \cup (F^\bullet_1-e), Z^\circ_2 \cup (F^\bullet_2+e))$ into $((Z^\circ_1-f)\cup F^\bullet_1, (Z^\circ_2+f)\cup F^\bullet_2)$. This gives an exchange sequence from $(X^\circ_1 \cup F^\bullet_1, X^\circ_2 \cup F^\bullet_2)$ to $(Y^\circ_1 \cup \widetilde{F}^\bullet_1, Y^\circ_2 \cup \widetilde{F}^\bullet_2)$ in $M$ of width at most $4\cdot (r_{M_\circ / t_1}(E_\circ-t_1)-1)$ and length at most $2\cdot r_{M_\circ / t_1}(E_\circ-t_1)^2$.

For sequence~\ref{seq:3}, the construction is analogous to that of sequence~\ref{seq:1}. Let $\cF''' \coloneqq (\widetilde{F}^\bullet_1, \widetilde{F}^\bullet_2 + t_{k_\cY})$ and $\cY''' \coloneqq (Y^\bullet_1, Y^\bullet_2+t_{k_\cY})$ if $\cY$ is of Type~\ref{it:type1}, and let $\cF''' \coloneqq (\widetilde{F}^\bullet_1 + t_{k_\cY}, \widetilde{F}^\bullet_2)$ and $\cY''' \coloneqq (Y^\bullet_1 +t_{k_\cY}, Y^\bullet_2)$ if $\cY$ is of Type~\ref{it:type2}. By Theorem~\ref{thm:graphicwhite}, there exists a $\{t_{k_\cY}\}$-avoiding $\cF'''$-$\cY'''$ exchange sequence in $M_\bullet$ of width at most $2\cdot (r_{M_\bullet}(E_\bullet)-1)$ and length at most $r_{M_\bullet}(E_\bullet)^2$, which yields an exchange sequence of the same width an length in $M$ from $(Y^\circ_1 \cup \widetilde{F}^\bullet_1, Y^\circ_2 \cup \widetilde{F}^\bullet_2)$ to $\cY$.

Concatenating the three sequences we obtain an $\cX$-$\cY$ exchange sequence in $M$ of width at most 
\begin{align*}
{}&{} 2\cdot (r_{M_\bullet}(E_\bullet)-1)+4\cdot (r_{M_\circ/t_1}(E_\circ-t_1)-1)+2\cdot (r_{M_\bullet}(E_\bullet)-1)\\
{}&{}= 4\cdot (r_{M_\circ}(E_\circ) + r_{M_\bullet}(E_\bullet)-3)\\
{}&{}= 4\cdot (r_M(E)-1),
\end{align*}
and length at most 
\begin{align*}
{}&{}r_{M_\bullet}(E_\bullet)^2+2\cdot r_{M_\circ/t_1}(E_\circ-t_1)^2+r_{M_\bullet}(E_\bullet)^2\\
{}&{}= 2 \cdot (r_{M_\bullet}(E_\bullet)^2 + (r_{M_\circ}(E_\circ)-1)^2) \\
{}&{}\leq 2 \cdot (r_{M_\bullet}(E_\bullet)+r_{M_\circ}(E_\circ)-2)^2\\
{}&{}= 2\cdot r_M(E)^2,
\end{align*}
where $r_{M_\bullet}(E_\bullet)^2 + (r_{M_\circ}(E_\circ)-1)^2 \le (r_{M_\bullet}(E_\bullet)+r_{M_\circ}(E_\circ)-2)^2$ holds by $r_{M_\bullet}(E_\bullet), r_{M_\circ}(E_\circ) \ge 3$. 

The proof leads to a polynomial algorithm for determining a $\cX$-$\cY$ exchange sequence. We have already discussed that one can find a decomposition of $M$ into basic matroids and perform the reduction steps using a polynomial number of oracle calls. Once a decomposition of the form $M=M_\circ\oplus_3 M_\bullet$ is identified where $M_\bullet$ is a graphic matroid of $G=(V,E)$, the proof of Lemma~\ref{lem:4reg} shows how to find a partition $E-T=F_1\cup F_2$. Finally, above we described how to construct the $\cX$-$\cY$ exchange sequence based on these, concluding the proof of the theorem.
\end{proof}

\subsection{Proof of Gabow's Conjecture for Regular Matroids}
\label{sec:main2}

\begin{proof}[Proof of Theorem~\ref{thm:reggabow}]
We prove by induction on the rank of the matroid. The statement holds when $r_M(E)\le 2$ by Claim~\ref{cl:triv}, therefore we consider the case $r_M(E) \ge 3$. We may assume that $X_1 \cup X_2 = E$ by deleting the elements in $E- (X_1 \cup X_2)$. 
If $M$ contains a nontrivial tight set $Z$, then we apply the induction hypothesis to $M|Z$ and $M/Z$ and use Lemma~\ref{lem:disjoint} together with the fact that $r(M|Z) + r(M/Z) = r_M(E)$.  If $M=M_\circ \oplus_2 M_\bullet$, then we apply the induction hypothesis to $M_\circ$ and $M_\bullet$ and use Lemma~\ref{lem:2sum} together with the fact that $r_{M_\circ}(E_\circ)+r_{M_\bullet}(E_\bullet)-1 = r_M(E)$. If $M$ contains a triad $T=\{t_1, t_2, t_3\}$ where, say, $t_1, t_2 \in X_1$, then there exists an exchange sequence from $(X_1-t_2, X_2-t_3)$ to $(X_2-t_2, X_1-t_3)$ in $M/t_2 \backslash t_3$ of length $r_{M/t_2 \backslash t_3}(E-\{t_2,t_3\}) = r_M(E)-1$ by the induction hypothesis. Hence, the statement follows using Lemma~\ref{lem:cocircuit2}. Similarly, we are also done if $M$ contains a triangle by taking the dual of $M$. Therefore, we may assume that all conditions of Proposition~\ref{prop:reductions} hold and, by taking the dual of $M$ if necessary, $M=M_\circ \oplus_3 M_\bullet$ where $M_\circ$ is a regular matroid and $M_\bullet$ is the graphic matroid of a simple 4-regular graph $G$. 

Similarly to the proof of Theorem~\ref{thm:regwhite}, the pairs of disjoint bases of $M$ can be of Type~\ref{it:type1} and Type~\ref{it:type2}. We may assume by symmetry that $(X_1, X_2)$ is of Type~\ref{it:type1} with $i=1$, $j=2$ and $k=3$, that is, $X^\circ_1+t_1+t_2$, $X^\circ_2+t_1$ and $X^\circ_2+t_2$ are bases of $M_\circ$, and $X^\bullet_1$, $X^\bullet_2+t_1$ and $X^\bullet_2+t_3$ are bases of $M_\bullet$. By the induction hypothesis, there exists an exchange sequence of length $r_{M_\circ/t_2}(E_\circ-t_2)$ transforming $(X^\circ_1+t_1, X^\circ_2)$ into $(X^\circ_2, X^\circ_1+t_1)$ in $M_\circ/t_2$. Exactly one of these exchanges uses $t_1$, say the $(\ell+1)$th step transforms $(Y^\circ_1+t_1, Y^\circ_2)$ into $(Y^\circ_1+e, Y^\circ_2-e+t_1)$  We construct an exchange sequence from $(X_1, X_2)$ to $(X_2, X_1)$ in $M$ by concatenating exchange sequences

\begin{enumerate}[label=(\arabic*)]\itemsep0em
\item \label{seqq:1} from $(X_1, X_2)$ to $(Y_1^\circ\cup X^\bullet_1, Y^\circ_2 \cup X^\bullet_2)$,
\item \label{seqq:2} from $(Y_1^\circ\cup X^\bullet_1, Y^\circ_2 \cup X^\bullet_2)$ to $((Y_1^\circ+e)\cup X^\bullet_2, (Y^\circ_2-e) \cup X^\bullet_1)$, and
\item \label{seqq:3} from $((Y_1^\circ+e)\cup X^\bullet_2, (Y^\circ_2-e) \cup X^\bullet_1)$, to $(X_2, X_1)$.
\end{enumerate}

For sequence~\ref{seqq:1}, we apply the $\ell$ symmetric exchanges transforming $(X^\circ_1+t_1, X^\circ_2)$ into $(Y^\circ_1+t_1, Y^\circ_2)$ in $M/t_2$ to the basis pair $(X_1, X_2)$ in $M$. As none of these exchanges uses $t_1$, each member of this sequence in $M/t_2$ can be written as $(Z^\circ_1+t_1, Z^\circ_2)$, and thus $(Z^\circ_1\cup X^\bullet_1, Z^\circ_2\cup X^\bullet_2)$ is a basis pair of $M$, since $X^\bullet_1$, $X^\bullet_2+t_1$ and $X^\bullet_2+t_3$ are bases of $M_\bullet$.

For sequence~\ref{seqq:2}, by $Y^\circ_2+t_2 \in \cB(M_\circ)$, Lemma~\ref{lem:0or2} implies that there exist $i,j$ satisfying $\{i,j\}=\{1,3\}$, $Y^\circ_2+t_i \in \cB(M_\circ)$ and $Y^\circ_2+t_j \not \in \cB(M_\circ)$. By Theorem~\ref{thm:graphicgabow}, there exists an exchange sequence of length $r_{M_\bullet}(E_\bullet)$ transforming $(X^\bullet_1, X^\bullet_2+t_j)$ into $(X^\bullet_2+t_j, X^\bullet_1)$ in $M_\bullet$ using $t_j$ in the last step. Assume that this last step exchanges $f$ and $t_j$, that is, it transforms $(X^\bullet_2+f, X^\bullet_1+t_j-f)$ into $(X^\bullet_2+t_j, X^\bullet_1)$. We apply the steps of this sequence in $M_\bullet$ except the one exchanging $f$ and $t_2$ to $(Y^\circ_1 \cup X^\bullet_1, Y^\circ_2 \cup X^\bullet_2)$. As none of these exchanges use $t_j$, each member of this sequence in $M_\bullet$ can be written as $(Z^\bullet_1, Z^\bullet_2+t_j)$, and thus $(Y^\circ_1 \cup Z^\bullet_1, Y^\circ_2 \cup Z^\bullet_2)$ is a basis pair of $M$. This way we transform $(Y^\circ_1 \cup X^\bullet_1, Y^\circ_2 \cup X^\bullet_2)$ into $(Y^\circ_1 \cup (X^\bullet_2+f), Y^\circ_2 \cup (X^\bullet_1-f))$ in $M$ using $r_{M_\bullet}(E_\bullet)-1$ steps. Finally, we obtain $((Y_1^\circ+e)\cup X^\bullet_2, (Y^\circ_2-e) \cup X^\bullet_1)$ by exchanging $e$ and $f$.

For sequence~\ref{seqq:3}, the construction is analogous to that of sequence~\ref{seqq:1}. We apply the $r(M_\circ/t_2)-\ell-1$ symmetric exchanges transforming $(Y_1^\circ+e, Y^\circ_2-e+t_1)$ into $(X^\circ_2, X^\circ_1+t_1)$ in $M/t_2$ to the basis pair $((Y_1^\circ+e)\cup X^\bullet_2, (Y^\circ_2-e) \cup X^\bullet_1)$ in $M$. The concatenation of the three sequences transforms $(X_1, X_2)$ into $(X_2, X_1)$ using  $\ell+r_{M_\bullet}(E_\bullet)+(r(M_\circ/t_2)-\ell-1) = r_{M_\bullet}(E_\bullet)+r_{M_\circ}(E_\circ)-2 = r_M(E)$ symmetric exchanges.

Similarly to the proof of Theorem~\ref{thm:regwhite}, the proof leads to a polynomial algorithm for determining an $(X_1,X_2)$-$(X_2,X_1)$ exchange sequence, concluding the proof of the theorem.
\end{proof}

\section{Conclusions}
\label{sec:conc}

In this work, we verified two long-standing open conjectures on the exchange distance of basis pairs in regular matroids. We presented preprocessing steps that reduce the problems to 3-connected regular matroids not containing small cocircuits. This led to a polynomial upper bound on the number of exchanges needed to transform a basis pair of a graphic matroid into another, a result that is of independent combinatorial interest. By combining a recent refinement of Seymour's decomposition theorem given by Aprile and Fiorini, a result of McGuinness on the dual of the 3-sum of two matroids, and 
an observation on \dy exchanges in cographic matroids,
we showed that the regular matroid can be assumed to have the form $M=M_\circ\oplus_3 M_\bullet$ where $M_\bullet$ is a graphic matroid. This, together with the aforementioned observations for the graphic case, allowed us to bound the exchange distance of basis pairs of regular matroids in general. Our proof implies an algorithm for determining a sequence of symmetric exchanges that transforms a given pair of bases into another using a polynomial number of oracle calls.

Our proof technique allows us to go beyond regular matroids to max-flow min-cut (MFMC) matroids, introduced by Seymour~\cite{seymour1977matroids} as matroids satisfying a generalization of Menger's theorem on the edge-connectivity of undirected graphs. Seymour~\cite{seymour1977matroids, seymour1981matroids} (see also \cite[Corollary~12.3.22]{oxley2011matroid}) showed that any MFMC matroid can be constructed by taking 1- and 2-sums of regular matroids and the Fano matroid $F_7$. 
Therefore, in order to extend our results to the class of MFMC matroids, it suffices to verify counterparts of Theorem~\ref{thm:regwhite} and Theorem~\ref{thm:reggabow} for $F_7$. This follows by Proposition~\ref{prop:f7}, hence our results hold for MFMC matroids as well. 

We close the paper by mentioning some open problems:

\begin{enumerate}\itemsep0em
    \item Our main motivation for considering regular matroids was Seymour's decomposition theorem. However, out technique might be applicable to any class of matroids whose members have a decomposition into basic matroids using 1-, 2- and 3-sums where the exchange distance of basis pairs can be bounded in every basic matroid. In particular, this allowed us to extend our results to MFMC matroids. It would be interesting to identify further matroid classes that admit such decompositions. 
    \item Seymour's definition of matroid sums breaks down for nonbinary matroids. A vast amount of work has focused on extending this notion to nonbinary matroids as well, see e.g.~\cite{truemper1992matroid}. Hence a natural question is whether our approach can be applied to these more general definitions of sums. 
    \item Blasiak~\cite{blasiak2008toric} settled White's conjecture for sequences of arbitrary length in graphic matroids.
    \begin{enumerate}\itemsep0em
        \item The proof recursively reduces the size of the problem by decreasing the number of bases in $(X_1,\dots,X_k)$ by one, and so it does not lead to an efficient algorithm for determining a sequence of symmetric exchanges between two basis sequences. This raises the following: does there exists a polynomial bound on the exchange distance of basis sequences of graphic matroids in general?
        \item While we could verify White's conjecture for basis pairs in regular matroids, the problem remains open for longer sequences. We believe that such a result might follow by combining our techniques with Blasiak's approach for the graphic case.
    \end{enumerate}
    \item A common generalization of White's conjecture for sequences of length two and Gabow's conjecture was proposed by Hamidoune~\cite{cordovil1993bases}, suggesting that the exchange distance of compatible basis pairs is at most the rank of the matroid. In~\cite{berczi2022weighted}, the conjecture was verified for strongly base orderable matroids, split matroids, spikes, and graphic matroids of wheel graphs. However, it remains open even for graphic matroids in general. 
\end{enumerate}

 \paragraph{Acknowledgement} 
 The authors are grateful to Dániel Garamvölgyi and Yutaro Yamaguchi for initial discussions on the problem, and to Luis Ferroni for calling their attention to the applications discussed in subsection ``Carath\'eodory Rank of Matroid Base Polytopes.''

 Tamás Schwarcz was supported by the \'{U}NKP-22-3 and \'{U}NKP-23-4 New National Excellence Program of the Ministry for Culture and Innovation from the source of the National Research, Development and Innovation Fund. This research has been implemented with the support provided by the Lend\"ulet Programme of the Hungarian Academy of Sciences -- grant number LP2021-1/2021, and by Dynasnet European Research Council Synergy project (ERC-2018-SYG 810115).

\bibliographystyle{abbrv}
\bibliography{regular}

\begin{thebibliography}{10}

\bibitem{aharoni2017faira}
R.~Aharoni, E.~Berger, D.~Kotlar, and R.~Ziv.
\newblock Fair representation in the intersection of two matroids.
\newblock {\em The Electronic Journal of Combinatorics}, 24(4):P4.10, 2017.

\bibitem{anari2021log}
N.~Anari, S.~O. Gharan, and C.~Vinzant.
\newblock Log-concave polynomials {I}: {E}ntropy and a deterministic
  approximation algorithm for counting bases of matroids.
\newblock {\em Duke Mathematical Journal}, 170(16):3459--3504, 2021.

\bibitem{anari2019log}
N.~Anari, K.~Liu, S.~O. Gharan, and C.~Vinzant.
\newblock Log-concave polynomials {II}: {H}igh-dimensional walks and an {FPRAS}
  for counting bases of a matroid.
\newblock In M.~Charikar and E.~Cohen, editors, {\em STOC'19: {P}roceedings of
  the 51st {A}nnual {ACM} {SIGACT} Symposium on Theory of Computing}, pages
  1--12. Association for Computing Machinery, New York, NY, 2019.

\bibitem{andres2014base}
S.~D. Andres, W.~Hochst\"{a}ttler, and M.~Merkel.
\newblock On a base exchange game on bispanning graphs.
\newblock {\em Discrete Applied Mathematics}, 165:25--36, 2014.

\bibitem{aprile2022regular}
M.~Aprile and S.~Fiorini.
\newblock Regular matroids have polynomial extension complexity.
\newblock {\em Mathematics of Operations Research}, 47(1):540--559, 2022.

\bibitem{backman2023regular}
S.~Backman and G.~Liu.
\newblock A regular unimodular triangulation of the matroid base polytope.
\newblock {\em arXiv preprint arXiv:2309.10229}, 2023.

\bibitem{baum1978integer}
S.~Baum and L.~E. Trotter.
\newblock Integer rounding and polyhedral decomposition for totally unimodular
  systems.
\newblock In R.~Henn, B.~Korte, and W.~Oettli, editors, {\em Optimization and
  Operations Research: Proceedings of a Workshop Held at the University of
  Bonn, October 2--8, 1977}, pages 15--23. Springer, Berlin, Heidelberg, 1978.

\bibitem{berczi2022weighted}
K.~B{\'e}rczi, B.~M{\'a}trav{\"o}lgyi, and T.~Schwarcz.
\newblock Weighted exchange distance of basis pairs.
\newblock {\em arXiv preprint arXiv:2211.12750}, 2022.

\bibitem{berczi2022exchange}
K.~B{\'e}rczi and T.~Schwarcz.
\newblock Exchange distance of basis pairs in split matroids.
\newblock {\em SIAM Journal on Discrete Mathematics}, 2023+.
\newblock To appear.

\bibitem{biswas2018fair}
A.~Biswas and S.~Barman.
\newblock Fair division under cardinality constraints.
\newblock In {\em Proceedings of the 27th International Joint Conference on
  Artificial Intelligence}, pages 91--97, 2018.

\bibitem{blasiak2008toric}
J.~Blasiak.
\newblock The toric ideal of a graphic matroid is generated by quadrics.
\newblock {\em Combinatorica}, 28(3):283--297, 2008.

\bibitem{bonin2013basis}
J.~E. Bonin.
\newblock Basis-exchange properties of sparse paving matroids.
\newblock {\em Advances in Applied Mathematics}, 50(1):6--15, 2013.

\bibitem{viet1997normal}
W.~Bruns, J.~Gubeladze, and N.~V. Trung.
\newblock Normal polytopes, triangulations, and {K}oszul algebras.
\newblock {\em Journal f\"{u}r die Reine und Angewandte Mathematik},
  485:123--160, 1997.

\bibitem{cordovil1993bases}
R.~Cordovil and M.~L. Moreira.
\newblock Bases-cobases graphs and polytopes of matroids.
\newblock {\em Combinatorica}, 13(2):157--165, 1993.

\bibitem{cox2011toric}
D.~A. Cox, J.~B. Little, and H.~K. Schenck.
\newblock {\em Toric varieties}.
\newblock American Mathematical Society, Providence, RI, 2011.

\bibitem{cunningham1984testing}
W.~H. Cunningham.
\newblock Testing membership in matroid polyhedra.
\newblock {\em Journal of Combinatorial Theory, Series B}, 36(2):161--188,
  1984.

\bibitem{dinitz2014matroid}
M.~Dinitz and G.~Kortsarz.
\newblock Matroid secretary for regular and decomposable matroids.
\newblock {\em SIAM Journal on Computing}, 43(5):1807--1830, 2014.

\bibitem{dror2023fair}
A.~Dror, M.~Feldman, and E.~Segal-Halevi.
\newblock On fair division under heterogeneous matroid constraints.
\newblock {\em Journal of Artificial Intelligence Research}, 76:567--611, 2023.

\bibitem{edmonds1965transversals}
J.~Edmonds and D.~R. Fulkerson.
\newblock Transversals and matroid partition.
\newblock {\em Journal of Research of the National Bureau of Standards (B)},
  69:147--153, 1965.

\bibitem{farber1989basis}
M.~Farber.
\newblock Basis pair graphs of transversal matroids are connected.
\newblock {\em Discrete Mathematics}, 73(3):245--248, 1989.

\bibitem{farber1985edge}
M.~Farber, B.~Richter, and H.~Shank.
\newblock Edge-disjoint spanning trees: A connectedness theorem.
\newblock {\em Journal of Graph Theory}, 9(3):319--324, 1985.

\bibitem{feder1992balanced}
T.~Feder and M.~Mihail.
\newblock Balanced matroids.
\newblock In {\em STOC '92: Proceedings of the Twenty-Fourth Annual ACM
  Symposium on Theory of Computing}, page 26–38. Association for Computing
  Machinery, New York, NY, 1992.

\bibitem{fekete2011equitable}
Z.~Fekete and J.~Szab{\'{o}}.
\newblock Equitable partitions to spanning trees in a graph.
\newblock {\em The Electronic Journal of Combinatorics}, 18(1), 2011.

\bibitem{foley1966resource}
D.~K. Foley.
\newblock {\em Resource allocation and the public sector}.
\newblock Yale University, 1966.

\bibitem{gabow1976decomposing}
H.~Gabow.
\newblock Decomposing symmetric exchanges in matroid bases.
\newblock {\em Mathematical Programming}, 10(1):271--276, 1976.

\bibitem{gijswijt2012polyhedra}
D.~Gijswijt and G.~Regts.
\newblock Polyhedra with the integer {C}arath{\'e}odory property.
\newblock {\em Journal of Combinatorial Theory, Series B}, 102(1):62--70, 2012.

\bibitem{greene1975some}
C.~Greene and T.~L. Magnanti.
\newblock Some abstract pivot algorithms.
\newblock {\em SIAM Journal on Applied Mathematics}, 29(3):530--539, 1975.

\bibitem{haws2009matroid}
D.~C. Haws.
\newblock {\em Matroid polytopes: algorithms, theory, and applications}.
\newblock University of California, Davis, 2009.

\bibitem{kajitani1988ordering}
Y.~Kajitani, S.~Ueno, and H.~Miyano.
\newblock Ordering of the elements of a matroid such that its consecutive w
  elements are independent.
\newblock {\em Discrete Mathematics}, 72(1-3):187--194, 1988.

\bibitem{egres_open_equit}
T.~Kir{\'a}ly.
\newblock Equitability of matroids.
\newblock \url{http://lemon.cs.elte.hu/egres/open/Equitability_of_matroids}.

\bibitem{kobayashi2023reconfiguration}
Y.~Kobayashi, R.~Mahara, and T.~Schwarcz.
\newblock Reconfiguration of the union of arborescences.
\newblock {\em arXiv preprint arXiv:2304.13217}, 2023.

\bibitem{kotlar2013circuits}
D.~Kotlar.
\newblock On circuits and serial symmetric basis-exchange in matroids.
\newblock {\em SIAM Journal on Discrete Mathematics}, 27(3):1274--1286, 2013.

\bibitem{kotlar2013serial}
D.~Kotlar and R.~Ziv.
\newblock On serial symmetric exchanges of matroid bases.
\newblock {\em Journal of Graph Theory}, 73(3):296--304, 2013.

\bibitem{lason2014toric}
M.~Laso{\'n} and M.~Micha{\l}ek.
\newblock On the toric ideal of a matroid.
\newblock {\em Advances in Mathematics}, 259:1--12, 2014.

\bibitem{mcguinness2014base}
S.~McGuinness.
\newblock A base exchange property for regular matroids.
\newblock {\em Journal of Combinatorial Theory, Series B}, 107:42--77, 2014.

\bibitem{mcguinness2020frame}
S.~McGuinness.
\newblock Frame matroids, toric ideals, and a conjecture of {W}hite.
\newblock {\em Advances in Applied Mathematics}, 118:102042, 2020.

\bibitem{mcguinness2022serial}
S.~McGuinness.
\newblock Serial exchanges in matroids.
\newblock {\em Discrete Mathematics}, 345(2):112679, 2022.

\bibitem{michalek2021invitation}
M.~Micha{\l}ek and B.~Sturmfels.
\newblock {\em Invitation to Nonlinear Algebra}.
\newblock American Mathematical Society, Providence, RI, 2021.

\bibitem{nash1964decomposition}
C.~S.~J. Nash-Williams.
\newblock Decomposition of finite graphs into forests.
\newblock {\em Journal of the London Mathematical Society}, 1(1):12--12, 1964.

\bibitem{nishimura2018introduction}
N.~Nishimura.
\newblock Introduction to reconfiguration.
\newblock {\em Algorithms}, 11(4):52, 2018.

\bibitem{oxley2011matroid}
J.~Oxley.
\newblock {\em Matroid Theory}, volume~21 of {\em Oxford Graduate Texts in
  Mathematics}.
\newblock Oxford University Press, Oxford, second edition, 2011.

\bibitem{robinson1980computational}
G.~C. Robinson and D.~J.~A. Welsh.
\newblock The computational complexity of matroid properties.
\newblock {\em Mathematical Proceedings of the Cambridge Philosophical
  Society}, 87(1):29--45, 1980.

\bibitem{schrijver1998theory}
A.~Schrijver.
\newblock {\em Theory of linear and integer programming}.
\newblock John Wiley \& Sons, Chichester, 1998.

\bibitem{schweig2011toric}
J.~Schweig.
\newblock Toric ideals of lattice path matroids and polymatroids.
\newblock {\em Journal of Pure and Applied Algebra}, 215(11):2660--2665, 2011.

\bibitem{seymour1977matroids}
P.~Seymour.
\newblock The matroids with the max-flow min-cut property.
\newblock {\em Journal of Combinatorial Theory, Series B}, 23(2-3):189--222,
  1977.

\bibitem{seymour1980decomposition}
P.~Seymour.
\newblock Decomposition of regular matroids.
\newblock {\em Journal of Combinatorial Theory, Series B}, 28(3):305--359,
  1980.

\bibitem{seymour1981matroids}
P.~Seymour.
\newblock Matroids and multicommodity flows.
\newblock {\em European Journal of Combinatorics}, 2(3):257--290, 1981.

\bibitem{shibata2016toric}
K.~Shibata.
\newblock Toric ideals of series and parallel connections of matroids.
\newblock {\em Journal of Algebra and Its Applications}, 15(06):1650106, 2016.

\bibitem{truemper1990decomposition}
K.~Truemper.
\newblock A decomposition theory for matroids. {V}. testing of matrix total
  unimodularity.
\newblock {\em Journal of Combinatorial Theory, Series B}, 49(2):241--281,
  1990.

\bibitem{truemper1992matroid}
K.~Truemper.
\newblock {\em Matroid decomposition}.
\newblock Academic Press, Inc., Boston, MA, 1992.

\bibitem{tutte1961problem}
W.~T. Tutte.
\newblock On the problem of decomposing a graph into {$n$} connected factors.
\newblock {\em Journal of the London Mathematical Society}, 36(1):221--230,
  1961.

\bibitem{van2013complexity}
J.~van~den Heuvel.
\newblock The complexity of change.
\newblock In {\em Surveys in combinatorics 2013}, pages 127--160. Cambridge
  University Press, Cambridge, 2013.

\bibitem{varian1973equity}
H.~R. Varian.
\newblock Equity, envy, and efficiency.
\newblock {\em Journal of Economic Theory}, 9(1):63--91, 1974.

\bibitem{white1975bracketa}
N.~L. White.
\newblock The bracket ring of a combinatorial geometry {I}.
\newblock {\em Transactions of the American Mathematical Society}, 202:79--95,
  1975.

\bibitem{white1975bracketb}
N.~L. White.
\newblock The bracket ring of a combinatorial geometry {II}. {U}nimodular
  geometries.
\newblock {\em Transactions of the American Mathematical Society},
  214:233--248, 1975.

\bibitem{white1980unique}
N.~L. White.
\newblock A unique exchange property for bases.
\newblock {\em Linear Algebra and its Applications}, 31:81--91, 1980.

\bibitem{wiedemann1984cyclic}
D.~Wiedemann.
\newblock Cyclic base orders of matroids.
\newblock Manuscript, 1984.

\end{thebibliography}

\end{document}